\DeclareMathOperator{\supp}{supp}
\DeclareMathOperator{\dist}{dist}
\DeclareMathOperator{\diam}{diam}
\DeclareMathOperator{\interior}{int}
\DeclareMathOperator*{\esssup}{ess\,sup}
\def\div{\mathop{\operatorname{div}}}
\def\Xint#1{\mathchoice
{\XXint\displaystyle\textstyle{#1}}%
{\XXint\textstyle\scriptstyle{#1}}%
{\XXint\scriptstyle\scriptscriptstyle{#1}}%
{\XXint\scriptscriptstyle\scriptscriptstyle{#1}}%
\!\int}
\def\XXint#1#2#3{{\setbox0=\hbox{$#1{#2#3}{\int}$ }
\vcenter{\hbox{$#2#3$ }}\kern-.585\wd0}}
\def\barint{\Xint-}
\newcommand{\bariint}{\barint\mkern-11.5mu\barint}
\newcommand{\re}{\mathbb{R}}
\newcommand{\rest}[1]{\rule[-4pt]{.38pt}{10.5pt}_{\,#1}}
\renewcommand{\iint}{\int\mkern-13.5mu\int}
\renewcommand{\emptyset}{\mbox{\textup{\O}}}
\theoremstyle{plain}
\newtheorem{theorem}[equation]{Theorem}
\newtheorem{lemma}[equation]{Lemma}
\newtheorem{corollary}[equation]{Corollary}
\newtheorem{proposition}[equation]{Proposition}
\theoremstyle{definition}
\newtheorem{definition}[equation]{Definition}
\theoremstyle{remark}
\newtheorem{remark}[equation]{Remark}
\numberwithin{equation}{section}
\newcommand{\vertiii}[1]{{\left\vert\kern-0.25ex\left\vert\kern-0.25ex\left\vert #1 
    \right\vert\kern-0.25ex\right\vert\kern-0.25ex\right\vert}}
\begin{document}
\title[Perturbations of elliptic operators in 1-sided chord-arc domains]
{Perturbations of elliptic operators in 1-sided chord-arc domains. Part I: Small and large perturbation for symmetric operators}

\author{Juan Cavero}

\address{Juan Cavero
\\
Instituto de Ciencias Matem\'{a}ticas CSIC-UAM-UC3M-UCM
\\
Consejo Superior de Investigaciones Cient\'{\i}ficas
\\
C/ Nicol\'{a}s Cabrera, 13-15
\\
E-28049 Madrid, Spain} \email{juan.cavero@icmat.es}

\author{Steve Hofmann}

\address{Steve Hofmann
\\
Department of Mathematics
\\
University of Missouri
\\
Columbia, MO 65211, USA} \email{hofmanns@missouri.edu}

\author{Jos\'{e} Mar\'{\i}a Martell}

\address{Jos\'{e} Mar\'{\i}a Martell
\\
Instituto de Ciencias Matem\'{a}ticas CSIC-UAM-UC3M-UCM
\\
Consejo Superior de Investigaciones Cient\'{\i}ficas
\\
C/ Nicol\'{a}s Cabrera, 13-15
\\
E-28049 Madrid, Spain} \email{chema.martell@icmat.es}

\thanks{ 
The first and third authors acknowledge financial
  support from the Spanish Ministry of Economy and Competitiveness,
  through the “Severo Ochoa” Programme for Centres of Excellence in
  R\&D” (SEV-2015-0554). They also acknowledge that the research
  leading to these results has received funding from the European
  Research Council under the European Union's Seventh Framework
  Programme (FP7/2007-2013)/ ERC agreement no. 615112 HAPDEGMT. 
	The second author was supported by NSF grant DMS-1664047.
	}

\date{\today}

\subjclass[2010]{31B05, 35J08, 35J25, 42B99, 42B25, 42B37.}

\keywords{Elliptic measure, Poisson kernel, Carleson measures, $A_\infty$ Muckenhoupt weights.}

\begin{abstract}
Let $\Omega\subset\re^{n+1}$, $n\ge 2$, be a 1-sided chord-arc domain, that is, a domain which satisfies interior Corkscrew and Harnack Chain conditions (these are respectively scale-invariant/quantitative versions of the openness and path-connectedness), and whose boundary $\partial\Omega$ is $n$-dimensional Ahlfors regular. Consider $L_0$ and $L$ two real symmetric divergence form elliptic operators and let $\omega_{L_0}$, $\omega_L$ be the associated elliptic measures. We show that if $\omega_{L_0}\in A_\infty(\sigma)$, where $\sigma=H^n\rest{\partial\Omega}$, and $L$ is a perturbation of $L_0$ (in the sense that the discrepancy between $L_0$ and $L$ satisfies certain Carleson measure condition), then $\omega_L\in A_\infty(\sigma)$. Moreover, if $L$ is a sufficiently small perturbation of $L_0$, then one can preserve the reverse Hölder classes, that is, if for some $1<p<\infty$, one has $\omega_{L_0}\in RH_p(\sigma)$ then $\omega_{L}\in RH_p(\sigma)$. Equivalently, if the Dirichlet problem with data in $L^{p'}(\sigma)$ is solvable for $L_0$ then so it is for $L$.
These results can be seen as extensions of the perturbation theorems obtained by  Dahlberg,
Fefferman-Kenig-Pipher, and Milakis-Pipher-Toro in more benign settings. As a consequence of our methods we can show that for any perturbation of the Laplacian (or, more in general, of any elliptic symmetric operator with Lipschitz coefficients satisfying certain Carleson condition) if its elliptic measure belongs to $A_\infty(\sigma)$ then necessarily $\Omega$ is in fact an NTA domain (and hence chord-arc) and therefore its boundary is uniformly rectifiable.
\end{abstract}

\maketitle

\tableofcontents

\section{Introduction and Main results}
In the last years there has been a renewed interest in understanding the behavior of the harmonic measure, or 
more  generally of elliptic measures, in very rough domains. Part of the effort consisted of establishing a connection between the ``regularity'' of the boundary of the domain, expressed in terms of some rectifiability, and the good behavior of the harmonic or elliptic measures, written in terms of absolute continuity with respect to the surface measure. 

A 1-sided chord-arc domain $\Omega\subset \mathbb{R}^{n+1}$, $n\ge 2$, is a set whose boundary $\partial\Omega$ is $n$-dimensional Ahlfors regular, and which satisfies interior Corkscrew and Harnack Chain conditions (these are are respectively scale-invariant/quantitative versions of the openness and 
path-connectedness; see Definitions \ref{defCS} and \ref{defHC}
below). The papers \cite{hofmartell,  hofmartelltuero} show that in the setting of 1-sided chord-arc domains, harmonic measure is in $A_\infty(\sigma)$, where $\sigma=H^n\rest{\partial\Omega}$ is the surface measure, if and only if $\partial\Omega$ is uniformly rectifiable (a quantitative version of rectifiability). It was shown later in \cite{MR3626548} that under the same background hypothesis, $\partial\Omega$ is uniformly rectifiable if and only if $\Omega$ satisfies an exterior corkscrew condition and hence $\Omega$ is a chord-arc domain. All these together, and additionally, \cite{MR3626548} in conjunction with
\cite{DJe} or \cite{Se}, give a characterization of chord-arc domains, or a characterization of the uniform rectifiablity of the boundary,  in terms of the membership of harmonic measure to the class $A_\infty(\sigma)$. For other elliptic operators $Lu=-\div(A\nabla u)$ with variable coefficients it was shown recently in \cite{HMT-var} that the same characterization holds provided $A$ is locally Lipschitz and has appropriately controlled oscillation near the boundary. 

This paper is the first part of a series of two articles where we consider perturbation of real elliptic operators in the setting of 1-sided chord-arc domains. Here we work with symmetric operators and study perturbations that preserve the $A_\infty(\sigma)$ property. We extend the work of \cite{MR1114608, MR3107693} (see also \cite{MR1828387}, \cite{MR2833577, MR2655385}) to the setting of 1-sided chord-arc domains and show that if the disagreement between two elliptic symmetric matrices satisfies certain Carleson measure condition then one of the the associated elliptic measures is in $A_\infty(\sigma)$ if and only if the other is in $A_\infty(\sigma)$. In other words, the property that the elliptic measure belongs to $A_\infty(\sigma)$ is stable under Carleson  measure type perturbations. As an immediate consequence of this we can see that the above characterization  of the fact that a domain is chord-arc, or its boundary is uniformly rectifiable, extends to any perturbation of the Laplacian or more in general to any perturbation of the operators considered in \cite{HMT-var}. In particular, our result allows a characterization with operators whose coefficients are not even continuous.

Our method to obtain the perturbation result differs from that in \cite{MR1114608, MR3107693}, and uses the so-called extrapolation of Carleson measures  which originated in  \cite{MR1020043} (see also \cite{MR1828387, MR1879847, MR1934198}).  We shall utilize this technique
in the form developed in \cite{MR2833577, MR2655385} (see also \cite{hofmartell}). 
The method is a bootstrapping argument, based on
the Corona construction of Carleson \cite{Car} and Carleson and Garnett \cite{CG}, that, roughly speaking, allows one to reduce matters to the case in which the perturbation is small in some sawtooth subdomains.  
In particular, in the course of the proof we are implicitly treating  the case in which the perturbation is small, and this allows us to fine-tune the argument in order to obtain that  for sufficiently small perturbations, we not only preserve the class $A_\infty$ but we also we can keep the same exponent in the corresponding reverse Hölder class. More precisely, assume that $\omega_{L_0}$, the elliptic measure associated with $L_0$, belongs to the class $A_\infty(\sigma)$, then $\omega_{L_0}\in RH_p(\sigma)$ for some $p>1$ (that is, the Radon-Nikodym derivative of $\omega_{L_0}$ with respect to the surface measure satisfies a scale-invariant estimate in $L^p$). We  obtain that if $L$ is a sufficiently small perturbation (in a Carleson measure sense) of $L_0$, then $\omega_L\in RH_p(\sigma)$. This result can be seen as an extension of \cite{MR859772, MR3204862} (see also \cite{MR1394581}), where the small perturbation case is considered in the unit ball.  It is worth mentioning that in the present scenario, $\omega_{L_0}\in RH_p(\sigma)$ if and only if the $L^{p'}$-Dirichlet problem for $L_0$ is solvable (in a non-tangential fashion). Thus, the small perturbation case says that if the $L^{q}$-Dirichlet problem for $L_0$ is solvable for a given $1<q<\infty$ then so is the corresponding Dirichlet problem for $L$ provided $L$ is small perturbation of $L_0$. Analogously, saying that $\omega_{L_0}\in A_\infty(\sigma)$ is equivalent to the fact that $L^{q}$-Dirichlet problem for $L_0$ is solvable for some (large) $q$. Thus if we just assume that $L_0$ is an arbitrary perturbation of $L$ we conclude that the $L^{\tilde{q}}$-Dirichlet problem for $L$ is solvable for some (possibly larger) $\tilde{q}$.

In the second part of this series of papers \cite{CHMT}, together with Tatiana Toro, we consider the non-symmetric case and present another approach, interesting on its own right, to treat the ``large'' constant case in the same setting of 1-sided chord-arc domains. There, we start with $L_0$ and $L$ two real elliptic operators, non-necessarily symmetric,  whose disagreement satisfies a Carleson measure condition. Our method decouples the proof in two independent steps. The first one, which is the real perturbation result, shows that if $\omega_{L_0}\in A_\infty(\sigma)$  then all bounded null-solutions of $L$ satisfy Carleson measure estimates. The second step establishes that for any real elliptic operator non-necessarily symmetric $L$, the property that all bounded null-solutions of $L$ satisfy Carleson measure estimates yields that $\omega_L\in A_\infty(\sigma)$. This extends the work \cite{KKiPT} where they treated bounded Lipschitz domains and domains above the graph of a Lipschitz function. Let us point out that it is also shown in \cite{hofmartellgeneral} that the converse is true, namely, that  $\omega_L\in A_\infty(\sigma)$ implies that all bounded null-solutions of $L$ satisfy Carleson measure estimates. Hence, eventually both properties are equivalent. Finally, an interesting application of the method developed in \cite{CHMT} allows one to obtain that if $L=-\div(A\nabla)$ with $A$ locally Lipschitz such that $|\nabla A|\delta\in L^\infty(\Omega)$ (here $\delta$ is the distance to $\partial\Omega$) and $|\nabla A|^2\delta$ satisfies a Carleson condition then $\omega_L\in A_\infty(\sigma)$ if and only if $\omega_{L^\top}\in A_\infty(\sigma)$, where $L^\top$ is the adjoint operator, that is $L^\top=-\div(A^\top\nabla)$ with $A^\top$ being the adjoint matrix of $A$.

\medskip

Let us now state the main results of this paper, the precise definitions can be found in Section \ref{section:prelim}.

\begin{theorem}\label{perturbationtheo}
Let $\Omega\subset\re^{n+1}$, $n\ge 2$, be a 1-sided $\mathrm{CAD}$ (cf. Definition \ref{def-1CAD}). Let $Lu=-\div(A\nabla u)$ and $L_0u=-\div(A_0\nabla u)$ be real symmetric elliptic operators (cf. Definition \ref{ellipticoperator}). Define the disagreement between $A$ and $A_0$ in $\Omega$ by
\begin{equation}\label{discrepancia}
  \varrho(A, A_0)(X):=\sup_{Y\in B(X,\delta(X)/2)}|A(Y)-A_0(Y)|,\qquad X\in\Omega,
\end{equation}
where $\delta(X):=\dist(X,\partial\Omega)$, and write
\begin{equation}\label{eq:defi-vertiii}
\vertiii{\varrho(A, A_0)}:=\sup_{\substack{x\in\partial\Omega \\ 0<r<\diam(\partial\Omega)}}\frac{1}{\sigma(B(x,r)\cap\partial\Omega)}\iint_{B(x,r)\cap\Omega}\frac{\varrho(A, A_0)(X)^2}{\delta(X)}\,dX.
\end{equation}
Suppose that there exists $p$, $1<p<\infty$, such that the elliptic measure $\omega_{L_0}\in RH_p(\partial\Omega)$  (cf. Definition \ref{defrhp}). The following hold:

\begin{list}{$(\theenumi)$}{\usecounter{enumi}\leftmargin=1cm \labelwidth=1cm \itemsep=0.1cm \topsep=.2cm \renewcommand{\theenumi}{\alph{enumi}}}

\item If $\vertiii{\varrho(A, A_0)}<\infty$, then there exists $1<q<\infty$ such that $\omega_L\in RH_q(\partial\Omega)$. Here, $q$ and the implicit constant depend only on dimension, $p$, the 1-sided $\mathrm{CAD}$ constants, the ellipticity of $L_0$ and $L$, $\vertiii{\varrho(A, A_0)}$, and the constant in $\omega_{L_0}\in RH_p(\partial\Omega)$.

\item There exists $\varepsilon_0>0$ (depending only on dimension, $p$, the 1-sided $\mathrm{CAD}$ constants, the ellipticity of $L_0$ and $L$, and the constant in $\omega_{L_0}\in RH_p(\partial\Omega)$) such that if one has $\vertiii{\varrho(A, A_0)} \leq\varepsilon_0$, then $\omega_L\in RH_p(\partial\Omega)$, with the implicit constant depending only on dimension, $p$, the 1-sided $\mathrm{CAD}$ constants, the ellipticity of $L_0$ and $L$, and the constant in $\omega_{L_0}\in RH_p(\partial\Omega)$.
\end{list}
\end{theorem}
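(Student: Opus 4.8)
The plan is to prove both statements simultaneously by means of an extrapolation-of-Carleson-measures argument in the spirit of \cite{MR2833577, MR2655385, hofmartell}, reducing the general Carleson perturbation to a collection of \emph{small} perturbations living on sawtooth subdomains. Fix the Carleson box $B(x_0,r_0)\cap\Omega$ and the associated Whitney--dyadic structure on $\partial\Omega$. The functional $\vertiii{\varrho(A,A_0)}$ provides a Carleson measure $\mathfrak{m}(E):=\iint_{E\cap\Omega}\varrho(A,A_0)(X)^2\,\delta(X)^{-1}\,dX$ with $\mathfrak{m}$-norm at most $\vertiii{\varrho(A,A_0)}\,\sigma$. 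Given any small threshold $\eta>0$, the stopping-time / Corona construction of \cite{Car,CG} decomposes the dyadic cubes of $\partial\Omega$ into a bounded-overlap family of ``stopping regimes'': on the sawtooth domain $\Omega_{\mathcal F}$ over each stopping collection, the coefficients of $A$ and $A_0$ disagree only up to a Carleson measure with norm $\lesssim\eta$. The key structural input we will invoke is that a sawtooth region $\Omega_{\mathcal F}$ built from a 1-sided CAD is again a 1-sided CAD (in fact bounded CAD), with structural constants controlled by those of $\Omega$, so that all the elliptic-measure machinery (comparison principles, Bourgain-type nondegeneracy, the dyadic equivalence between $A_\infty$ and $RH_p$, the equivalence of $\omega_{L_0}\in A_\infty(\sigma)$ with the $L^{p'}$-solvability) is available uniformly on every $\Omega_{\mathcal F}$.

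The heart of the argument is therefore the \textbf{small-perturbation step}: if $\Omega'$ is a 1-sided CAD, $\omega_{L_0}\in RH_p(\partial\Omega')$ with a fixed constant, and $\vertiii{\varrho(A,A_0)}_{\Omega'}\le\eta$ with $\eta$ small enough depending only on the allowed parameters, then $\omega_L\in RH_p(\partial\Omega')$ with a constant controlled by the same data. To prove this I would fix a surface ball $\Delta=\Delta(x,s)$ and a Borel set $F\subset\Delta$, let $u_F$ be the $L$-solution with boundary data $\mathbf 1_F$, let $u_F^0$ be the corresponding $L_0$-solution, and estimate $u_F-u_F^0$ by the standard energy/Caccioppoli trick: writing $L(u_F-u_F^0)=\div((A_0-A)\nabla u_F^0)$ and testing against the appropriate Green function, one bounds $|u_F(X)-u_F^0(X)|$ at a corkscrew point $X_\Delta$ by a Carleson-measure pairing of $\varrho(A,A_0)(Y)^2\delta(Y)^{-1}$ against the square function of $u_F^0$ (which in turn is controlled, via $\omega_{L_0}\in RH_p$ and the Dahlberg--Jerison--Kenig / Kenig--Pipher technology, by $\|\mathbf 1_F\|$-type quantities). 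Choosing $\eta$ small makes $\omega_L(F)/\sigma(F)$ a small perturbation of $\omega_{L_0}(F)/\sigma(F)$ in the relevant averaged sense, which upgrades to $\omega_L\in RH_p(\partial\Omega')$ by the self-improvement of reverse-Hölder inequalities. This gives part (b) directly (applied with $\Omega'=\Omega$), provided $\vertiii{\varrho(A,A_0)}\le\varepsilon_0$.

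For part (a), with $\vertiii{\varrho(A,A_0)}$ merely finite, we run the extrapolation: feeding the small-perturbation lemma (with its own small threshold $\eta$) into the abstract extrapolation-of-Carleson-measures theorem of \cite{MR2833577, MR2655385}, we conclude that the relevant Carleson functional controlling $\omega_L$ is finite, hence $\omega_L\in A_\infty(\sigma)$, hence $\omega_L\in RH_q(\partial\Omega)$ for some $q>1$ with the stated quantitative dependence (the exponent $q$ is \emph{not} preserved because the extrapolation iterates through many scales and the reverse-Hölder exponent degrades at each stage). The main obstacle—and the step requiring the most care—is the small-perturbation estimate, specifically making the passage from the pointwise discrepancy $\varrho(A,A_0)$ to a genuine quantitative bound on the difference of elliptic measures uniformly over sawtooth subdomains; this demands careful use of the Green-function representation, Caccioppoli and interior/boundary Hölder estimates, and the full equivalence between membership in $A_\infty(\sigma)$ and solvability of the Dirichlet problem, all with constants tracked so that the bootstrapping in the extrapolation scheme closes. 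A secondary technical point is verifying that the sawtooth domains inherit the 1-sided CAD structure with uniform constants, but this is by now standard in this circle of ideas and can be cited from the preliminaries.
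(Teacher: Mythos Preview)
Your overall strategy---extrapolation of Carleson measures reducing to a small-perturbation lemma proved via a Green-function representation of $u-u_0$---is the paper's strategy, and your identification of the main obstacle (the small-perturbation step) is correct. However, two concrete points in your plan do not match what is actually needed and would not close as written.

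First, your small-perturbation argument is missing its most delicate feature: the estimate is \emph{self-referential}. When you write $L(u-u_0)=\div((A_0-A)\nabla u_0)$ and represent via the Green function, the Green function that appears is $G_{L}$, not $G_{L_0}$; after Caccioppoli and Harnack, $|\nabla G_{L}(X_0,\cdot)|$ on a Whitney cube is comparable to $\omega_{L}^{X_0}(Q)/\sigma(Q)$, so the tent-space pairing yields
\[
|u(X_0)-u_0(X_0)|\ \lesssim\ \eta^{1/2}\,\big\|k_{L}^{X_0}\big\|_{L^p},
\]
i.e.\ the right side involves the very quantity you are trying to bound. One then takes a duality sup over test data $g\in L^{p'}$, obtains $\|k_L^{X_0}\|_{L^p}\le C\eta^{1/2}\|k_L^{X_0}\|_{L^p}+C\|k_{L_0}^{X_0}\|_{L^p}$, and absorbs the first term. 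For this ``hiding'' step one must know \emph{a priori} that $\|k_L^{X_0}\|_{L^p}<\infty$, which is false in general. The paper handles this by a preliminary approximation (its Step~0): replace $A$ by $A^j$, equal to $A$ where $\delta\ge 2^{-j}$ and to $A_0$ where $\delta<2^{-j}$; then $L^j\equiv L_0$ near the boundary, so $\omega_{L^j}\ll\sigma$ with some (bad, $j$-dependent) $L^p$ bound, the absorption works, and one obtains the estimate with constants independent of $j$. A separate limiting argument passes from $L^j$ back to $L$. Your plan has no analogue of this step.

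Second, your plan to run the small-perturbation lemma on each sawtooth $\Omega'=\Omega_{\mathcal F}$ as a stand-alone 1-sided CAD, and then ``patch back'', is not how the extrapolation actually closes here and would require substantial additional work you have not described (controlling $\omega_{L_0}\in RH_p(\partial\Omega')$ uniformly, and then gluing the sawtooth conclusions). The paper never leaves $\Omega$: it defines a chain of auxiliary operators $L_1,L_2,L_3$ on $\Omega$, obtained by replacing $A_0$ by $\widetilde A$ on successively larger regions ($\Omega_{\mathcal F,Q_0}$, then $T_{Q_0}$, then all of $\Omega$), and transfers the $A_\infty^{\mathrm{dyadic}}$ property along this chain. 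The passage from $L_1$ to $L_2$ uses the ``sawtooth lemma'' (a comparison of the \emph{projected} measures $\mathcal P_{\mathcal F}\omega_{L_k}$ via an auxiliary measure built from the elliptic measure of $\Omega_{\mathcal F,Q_0}$), and the passage from $L_2$ to $L_3$ uses a local comparison of Radon--Nikodym derivatives away from the boundary of $Q_0$. The extrapolation Lemma then applies to $\mathcal P_{\mathcal F}\omega_L^{X_{Q_0}}$ on the dyadic grid of $\partial\Omega$, not to elliptic measures on $\partial\Omega_{\mathcal F}$. Your proposal does not mention the projected measures $\mathcal P_{\mathcal F}$ or the sawtooth lemma, and without them the bootstrap does not fit the abstract extrapolation hypothesis.
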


To present the characterization of chord-arc domains advertised above we need to introduce some notation. Let $\mathbb{L}_0$ be the collection of real symmetric elliptic operators $Lu=-\div(A\nabla u)$ (cf. Definition \ref{ellipticoperator}) such that $A\in \mathrm{Lip}_{\rm loc}(\Omega)$, $\big\||\nabla A|\,\delta\big\|_{L^\infty(\Omega)}<\infty$, and 
\begin{equation}\label{car-A-new}
\sup_{\substack{x\in\partial\Omega\\0<r<\diam(\partial\Omega)}} \frac1{\sigma(B(x,r)\cap\partial\Omega)}\iint_{B(x,r)\cap\Omega} |\nabla A(X)|dX<\infty.
\end{equation}
We also introduce $\mathbb{L}$, the collection of real symmetric elliptic operators $Lu=-\div(A\nabla u)$ (cf. Definition \ref{ellipticoperator}) for which there exists $L_0=-\div(A\nabla u)\in \mathbb{L}_0$ in such a way that $\vertiii{\varrho(A, A_0)}<\infty$. Note that all constant coefficient operators belong to $\mathbb{L}_0$ and also that $\mathbb{L}_0\subset \mathbb{L}$. 

\begin{corollary}
Let $\Omega\subset\re^{n+1}$, $n\ge 2$, be a 1-sided $\mathrm{CAD}$ (cf. Definition \ref{def-1CAD}). Let $L\in \mathbb{L}$
be a real symmetric elliptic operators. 
Then
$$
\omega_L\in A_\infty(\sigma)
\qquad\Longleftrightarrow\qquad
\Omega\mbox{ is a $\mathrm{CAD}$ (cf. Definition \ref{def-CAD})}.
$$
\end{corollary}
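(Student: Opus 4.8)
The plan is to deduce the corollary from Theorem~\ref{perturbationtheo}(a) combined with the characterization established in \cite{HMT-var} (together with \cite{MR3626548}) for elliptic operators with Lipschitz-type coefficients. Two elementary remarks make this possible. First, the disagreement functional is symmetric in its two arguments: for every $X\in\Omega$,
\[
\varrho(A_0,A)(X)=\sup_{Y\in B(X,\delta(X)/2)}|A_0(Y)-A(Y)|=\varrho(A,A_0)(X),
\]
hence $\vertiii{\varrho(A_0,A)}=\vertiii{\varrho(A,A_0)}$; thus ``$L$ is a perturbation of $L_0$'' is really a symmetric relation, and Theorem~\ref{perturbationtheo}(a) may be applied with the roles of $L$ and $L_0$ interchanged. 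Second, recall the standard fact that $A_\infty(\sigma)=\bigcup_{1<p<\infty}RH_p(\sigma)$, so that $\omega\in A_\infty(\sigma)$ precisely when $\omega\in RH_p(\sigma)$ for some $p$.

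Fix $L\in\mathbb L$ and, by the very definition of $\mathbb L$, choose $L_0=-\div(A_0\nabla)\in\mathbb L_0$ with $\vertiii{\varrho(A,A_0)}<\infty$. I first observe that $L_0$ lies within the scope of \cite{HMT-var}: since $A_0\in\mathrm{Lip}_{\rm loc}(\Omega)$ and $\big\||\nabla A_0|\,\delta\big\|_{L^\infty(\Omega)}<\infty$, one has $|\nabla A_0(X)|^2\,\delta(X)\le\big\||\nabla A_0|\,\delta\big\|_{L^\infty(\Omega)}\,|\nabla A_0(X)|$ for $X\in\Omega$, so that integrating over $B(x,r)\cap\Omega$ and invoking \eqref{car-A-new} shows that $|\nabla A_0(X)|^2\,\delta(X)\,dX$ is a Carleson measure on $\Omega$. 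Hence the hypotheses on the coefficients in the main result of \cite{HMT-var} hold for $L_0$, and that result, combined with \cite{MR3626548}, gives the equivalence
\[
\omega_{L_0}\in A_\infty(\sigma)\qquad\Longleftrightarrow\qquad\Omega\ \mbox{is a }\mathrm{CAD}.
\]

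The two implications of the corollary now follow immediately. If $\Omega$ is a $\mathrm{CAD}$, then the displayed equivalence yields $\omega_{L_0}\in A_\infty(\sigma)$, so $\omega_{L_0}\in RH_p(\partial\Omega)$ for some $1<p<\infty$, and Theorem~\ref{perturbationtheo}(a) applied to the pair $(L_0,L)$ (legitimate since $\vertiii{\varrho(A,A_0)}<\infty$) produces $1<q<\infty$ with $\omega_L\in RH_q(\partial\Omega)\subset A_\infty(\sigma)$. Conversely, if $\omega_L\in A_\infty(\sigma)$, then $\omega_L\in RH_p(\partial\Omega)$ for some $1<p<\infty$; applying Theorem~\ref{perturbationtheo}(a) with the roles of $L$ and $L_0$ interchanged --- allowed because $\vertiii{\varrho(A_0,A)}=\vertiii{\varrho(A,A_0)}<\infty$ and both operators are real symmetric elliptic operators --- we get $1<q<\infty$ with $\omega_{L_0}\in RH_q(\partial\Omega)\subset A_\infty(\sigma)$, and then the displayed equivalence forces $\Omega$ to be a $\mathrm{CAD}$.

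Granted Theorem~\ref{perturbationtheo} and the cited characterization, there is no serious obstacle here; the corollary is essentially a bookkeeping assembly. The only points that deserve care are the two elementary remarks in the first paragraph: the symmetry of $\varrho$ in its arguments, which is what lets the Carleson perturbation hypothesis be used in either direction, and the verification that membership in $\mathbb L_0$ really does put $L_0$ inside the hypotheses of \cite{HMT-var}, via the passage from the $L^1$-Carleson bound \eqref{car-A-new} together with $|\nabla A_0|\,\delta\in L^\infty(\Omega)$ to a Carleson-measure bound for $|\nabla A_0|^2\delta$. All the genuine work resides in Theorem~\ref{perturbationtheo} and in the results quoted from \cite{HMT-var} and \cite{MR3626548}.
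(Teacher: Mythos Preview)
Your argument is correct and, for the nontrivial forward implication ($\omega_L\in A_\infty(\sigma)\Rightarrow\Omega$ is a $\mathrm{CAD}$), it is exactly the paper's proof: switch the roles of $L$ and $L_0$ in Theorem~\ref{perturbationtheo}(a) using the symmetry of $\varrho$, deduce $\omega_{L_0}\in A_\infty(\sigma)$, and then invoke \cite{HMT-var}. The only difference is in the backward implication: the paper simply cites the prior perturbation results \cite{MR3107693, MR1829584} valid in chord-arc domains, whereas you first obtain $\omega_{L_0}\in A_\infty(\sigma)$ from the known $\mathbb{L}_0$ characterization and then apply Theorem~\ref{perturbationtheo}(a) in the forward direction; this is equally valid and arguably more self-contained, since it avoids appealing to the $\mathrm{CAD}$ perturbation theory that Theorem~\ref{perturbationtheo} is generalizing.
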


In the previous result the backward implication is well-known and follows from \cite{MR3107693, MR1829584} (see also \cite[Appendix A]{HMT-var}). For the forward implication, (that is, the fact that $A_\infty(\sigma)$ gives the existence of exterior corkscrews), the case when $L$ is the Laplacian was proved combining \cite{MR3626548, hofmartelltuero}. The case of operators in $\mathbb{L}_0$ is the main result of \cite{HMT-var}. Our contribution here is to extend $\mathbb{L}_0$ and to
be able to consider operators in $\mathbb{L}$ whose coefficients may not posses any regularity. The proof is as follows. Let $L=-\div(A\nabla u)\in\mathbb{L}$ be such that $\omega_L\in A_\infty(\sigma)
$. By the definition of the class $\mathbb{L}$, there exists $L_0=-\div(A_0\nabla u)\in \mathbb{L}_0$ such that $\vertiii{\varrho(A, A_0)}<\infty$. This and the fact that $\omega_L\in A_\infty(\sigma)$ allow us to invoke Theorem \hyperref[perturbationtheo]{\ref*{perturbationtheo}(a)} (note that we are switching the roles of $L_0$ and $L$) to conclude that $w_{L_0}\in A_\infty(\sigma)$. In turn, since $L_0\in\mathbb{L}_0$ we can invoke the main result in \cite{HMT-var} to conclude that $\Omega$ satisfies the exterior corkscrew condition, and therefore $\Omega$ is CAD as desired. 

\medskip

The plan of the paper is as follows. In Section \ref{section:prelim} we present some preliminaries, definition, and some background results that will be used throughout the paper. Section \ref{section:aux} contains some auxiliary results. The proof of 
Theorem \ref{perturbationtheo} is given in Sections \ref{section:main-a} and \ref{section:main-b}. Finally in Section \ref{section:apps} we present some applications of Theorem \hyperref[perturbationtheo]{\ref*{perturbationtheo}(b)}.

\section{Preliminaries}\label{section:prelim}
\subsection{Notation and conventions}
\begin{list}{$\bullet$}{\leftmargin=0.4cm  \itemsep=0.2cm}

\item  Our ambient space is $\re^{n+1}$, $n\ge 2$.

\item We use the letters $c$, $C$ to de
note harmless positive constants, not necessarily the same at each occurrence, which depend only on dimension and the constants appearing in the hypotheses of the theorems (which we refer to as the ``allowable parameters''). We shall also sometimes write $a\lesssim b$ and $a\approx b$ to mean, respectively, that $a\leq C b$ and $0<c\leq a/b\leq C$, where the constants $c$ and $C$ are as above, unless explicitly noted to the contrary. At times, we shall designate by $M$ a particular constant whose value will remain unchanged throughout the proof of a given lemma or proposition, but which may have a different value during the proof of a different lemma or proposition.

\item Given a domain (i.e., open and connected) $\Omega\subset\re^{n+1}$, we shall use lower case letters $x,y,z$, etc., to denote points on $\partial\Omega$, and capital letters $X,Y,Z$, etc., to denote generic points in $\re^{n+1}$ (especially those in $\Omega$).


\item The open $(n+1)$-dimensional Euclidean ball of radius $r$ will be denoted $B(x,r)$ when the center $x$ lies on $\partial\Omega$, or $B(X,r)$ when the center $X\in\re^{n+1}\setminus \partial\Omega$. A ``surface ball'' is denoted $\Delta(x,r):=B(x,r)\cap \partial\Omega$, and unless otherwise specified it is implicitly assumed that $x\in\partial\Omega$.  Also if $\partial\Omega$ is bounded, we typically assume that $0<r\lesssim\diam(\partial\Omega)$, so that $\Delta=\partial\Omega$ if $\diam(\partial\Omega)<r\lesssim\diam(\partial\Omega)$.

\item Given a Euclidean ball $B$ or surface ball $\Delta$, its radius will be denoted $r(B)$ or $r(\Delta)$
respectively.

\item Given a Euclidean ball $B=B(X,r)$ or surface ball $\Delta=\Delta(x,r)$, its concentric dilate by a factor of $\kappa>0$ will be denoted by $\kappa B=B(X,\kappa r)$ or $\kappa\Delta=\Delta(x,\kappa r)$.

\item For $X\in\re^{n+1}$, we set $\delta_{\partial\Omega}(X):=\dist(X,\partial\Omega)$. Sometimes, when clear from the context we will omit the subscript $\partial\Omega$ and simply write $\delta(X)$.

\item We let $H^n$ denote the $n$-dimensional Hausdorff measure, and let $\sigma_{\partial\Omega}:=H^n\rest{\partial\Omega}$ denote the ``surface measure'' on $\partial\Omega$. For a closed set $E\subset\re^{n+1}$ we will use the notation $\sigma_{E}:=H^n\rest{E}$. When clear from the context we will also omit the subscript and simply write $\sigma$.

\item For a Borel set $A\subset\re^{n+1}$, we let $\mathbf{1}_A$ denote the usual indicator function of $A$, i.e., $\mathbf{1}_A(x)=1$ if $x\in A$, and $\mathbf{1}_A(x)=0$ if $x\notin A$.

\item For a Borel set $A\subset\re^{n+1}$, we let $\interior(A)$ denote the interior of $A$, and $\overline{A}$ denote the closure of $A$. If $A\subset \partial\Omega$, $\interior(A)$ will denote the relative interior, i.e., the largest relatively open set in $\partial\Omega$ contained in $A$. Thus, for $A\subset \partial\Omega$, the boundary is then well defined by $\partial A:=\overline{A}\setminus\interior(A)$.

\item For a Borel set $A\subset\re^{n+1}$, we denote by $C(A)$ the space of continuous functions on $A$ and by $C_c(A)$ the subspace of $C(A)$ with compact support in $A$. Note that if $A$ is compact then $C(A)\equiv C_c(A)$.

\item For a Borel set $A\subset \partial\Omega$ with $0<\sigma(A)<\infty$, we write $\barint_{A}f\,d\sigma:=\sigma(A)^{-1}\int_A f\,d\sigma$.

\item We shall use the letter $I$ (and sometimes $J$) to denote a closed $(n+1)$-dimensional Euclidean cube with sides parallel to the co-ordinate axes, and we let $\ell(I)$ denote the side length of $I$. We use $Q$ to denote a dyadic ``cube'' on $E\subset\re^{n+1}$. The latter exists, given that $E$ is $\mathrm{AR}$ (cf. \cite{davidsemmes1}, \cite{christ}), and enjoy certain properties which we enumerate in Lemma \ref{dyadiccubes} below.
\end{list}
\medskip

\subsection{Some definitions}
\begin{definition}[\textbf{Corkscrew condition}]\label{defCS}
Following \cite{MR676988}, we say that an open set $\Omega\subset\re^{n+1}$ satisfies the ``Corkscrew condition'' if for some uniform constant $c\in(0,1)$ and for every surface ball $\Delta:=\Delta(x,r)=B(x,r)\cap\partial\Omega$ with $x\in\partial\Omega$ and $0<r<\diam(\partial\Omega)$, there is a ball $B(X_{\Delta},cr)\subset B(x,r)\cap\Omega$. The point $X_\Delta\in\Omega$ is called a ``corkscrew point'' relative to $\Delta$. Note that we may allow $r<C\diam(\partial\Omega)$ for any fixed $C$, simply by adjusting the constant $c$.
\end{definition}

\begin{definition}[\textbf{Harnack Chain condition}]\label{defHC}
Again following \cite{MR676988}, we say that $\Omega\subset\re^{n+1}$ satisfies the Harnack Chain condition if there is a uniform constant $C$ such that for every $\rho>0$, $\Theta\geq 1$, and every pair of points $X,X'\in\Omega$ with $\delta(X),\delta(X')\geq\rho$ and $|X-X'|<\Theta\rho$, there is a chain of open balls $B_1,\dots,B_N\subset\Omega$, $N\leq C(\Theta)$, with $X\in B_1$, $X'\in B_N$, $B_k\cap B_{k+1}\neq\emptyset$ and $C^{-1}\diam(B_k)\leq\dist(B_k,\partial\Omega)\leq C\diam(B_k)$. The chain of balls is called a ``Harnack Chain''.                                                                               \end{definition}

\begin{definition}[\textbf{Ahlfors regular}]
We say that a closed set $E\subset\re^{n+1}$ is $n$-dimensional $\mathrm{AR}$ (or simply $\mathrm{AR}$), if there is some uniform constant $C=C_{\mathrm{AR}}$ such that
$$
C^{-1}r^n\leq H^n(E\cap B(x,r))\leq C r^n,\quad 0<r< \diam(E),\quad x\in E.
$$
\end{definition}

\begin{definition}[\textbf{1-sided chord-arc domain}]\label{def-1CAD}
A connected open set $\Omega\subset\re^{n+1}$ is a ``1-sided chord-arc domain'' (1-sided CAD for short) if it satisfies the Corkscrew and Harnack Chain conditions and if $\partial\Omega$ is $\mathrm{AR}$.
\end{definition}

\begin{definition}[\textbf{Chord-arc domain}]\label{def-CAD}
A connected open set $\Omega\subset\re^{n+1}$ is a ``chord-arc domain'' (CAD for short) if it is a 1-sided CAD and moreover $\Omega$ satisfies the exterior Corkscrew condition (that is, the domain $\Omega{\rm ext}$ satisfies the Corkscrew condition).
\end{definition}

\begin{definition}
Given $E\subseteq\re^{n+1}$ and $n$-dimensional $\mathrm{AR}$ set, let 
$H^{1/2}(E)$ be the set of functions $f\in L^2(E)$ such that 
$$
\|f\|_{H^{1/2}(E)}:=\|f\|_{L^2(E)}+\bigg(\int_E\int_E\frac{|f(x)-f(y)|^2}{|x-y|^{n+1}}\,d\sigma(x)\,d\sigma(y)\bigg)^{1/2}<\infty.
$$
\end{definition}
\medskip

\subsection{Dyadic grids and sawtooths}

We give a lemma concerning the existence of a ``dyadic grid'':

\begin{lemma}[\textbf{Existence and properties of the ``dyadic grid''}, {\cite{davidsemmes1, davidsemmes2}, \cite{christ}}]\label{dyadiccubes}
Suppose that $E\subset\re^{n+1}$ is $n$-dimensional $\mathrm{AR}$. Then there exist constants $a_0>0$, $\eta>0$ and $C_1<\infty$ depending only on dimension and the $\mathrm{AR}$ constant, such that for each $k\in\mathbb{Z}$ there is a collection of Borel sets (``cubes'')
$$
\mathbb{D}_k:=\big\{Q_j^k\subset \partial\Omega:\:j\in\mathcal{J}_k\big\},
$$
where $\mathcal{J}_k$ denotes some (possibly finite) index set depending on $k$, satisfying:
\begin{list}{$(\theenumi)$}{\usecounter{enumi}\leftmargin=1cm \labelwidth=1cm \itemsep=0.1cm \topsep=.2cm \renewcommand{\theenumi}{\alph{enumi}}}
\item $E=\bigcup_jQ_j^k$ for each $k\in\mathbb{Z}$.
\item If $m\geq k$ then either $Q_i^m\subset Q_j^k$ or $Q_i^m\cap Q_j^k=\emptyset$.
\item For each $j,k\in\mathbb{Z}$ and each $m>k$, there is a unique $i\in\mathbb{Z}$ such that $Q_j^k\subset Q_i^m$.
\item $\diam(Q_j^k)\leq C_1\,2^{-k}$.
\item Each $Q_j^k$ contains some ``surface ball'' $\Delta(x_j^k,a_02^{-k})=B(x_j^k,a_02^{-k})\cap E$.
\item $H^n\big(\big\{x\in Q_j^k:\,\dist(x,E\setminus Q_j^k)\leq\tau 2^{-k}\big\}\big)\leq C_1\tau^\eta H^n(Q_j^k)$, for all $j,k\in\mathbb{Z}$ and for all $\tau\in(0,a_0)$.
\end{list}
\end{lemma}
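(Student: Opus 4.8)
The final statement is the by-now-standard construction of ``dyadic cubes'' on an Ahlfors regular set, due to M.~Christ and, independently, to David--Semmes (\cite{christ,davidsemmes1,davidsemmes2}); the plan is simply to reproduce that construction, so I only sketch the steps. First I would fix, for each $k\in\mathbb{Z}$, a maximal $c\,2^{-k}$-separated subset $\mathcal{X}_k=\{x^k_j\}_{j\in\mathcal{J}_k}$ of $E$ (by a greedy/Zorn selection), with $c$ a small dimensional/$\mathrm{AR}$ constant to be pinned down at the end, and I would arrange the nets to be nested, $\mathcal{X}_k\subset\mathcal{X}_{k+1}$, by building $\mathcal{X}_{k+1}$ as a maximal $c\,2^{-k-1}$-separated extension of $\mathcal{X}_k$. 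By maximality $E\subset\bigcup_j B(x^k_j,c\,2^{-k})$, while separation makes the balls $B(x^k_j,\tfrac{c}{2}2^{-k})$ pairwise disjoint; since $E$ is $\mathrm{AR}$, this gives $\#\{j:x^k_j\in B(x,R)\}\lesssim(R\,2^k)^n$ for $R\ge 2^{-k}$, which feeds all the ``bounded overlap'' estimates used later.

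Next I would impose a tree structure: define a parent map $\pi\colon\mathcal{X}_{k+1}\to\mathcal{X}_k$ by letting $\pi(y)$ be a point of $\mathcal{X}_k$ \emph{nearest} to $y$, with ties broken by a fixed well-ordering of $\bigcup_k\mathcal{X}_k$; then $\pi(y)=y$ when $y\in\mathcal{X}_k$ and $|y-\pi(y)|<c\,2^{-k}$ by maximality of the net. Iterating produces ancestor maps $\pi^{(m-k)}\colon\mathcal{X}_m\to\mathcal{X}_k$ for $m\ge k$. I would then define the cubes by the ``iterated Voronoi'' recipe of Christ and of Hytönen--Kairema: for each fixed $k$, partition $E$ into Borel sets $\{Q^k_j\}_j$ by assigning $x\in E$ to $Q^k_j$ precisely when, at all sufficiently fine scales $m$, the scale-$k$ ancestor of the net point of $\mathcal{X}_m$ whose Voronoi cell contains $x$ equals $x^k_j$; equivalently, realize the scale-$k$ partition as a refinement of the scale-$(k-1)$ one that is coherent with the tree $\pi$. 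Taking this as the definition, property $(a)$ (cover) and the nesting content of $(b)$--$(c)$ (finer partitions refine coarser ones, each $Q^k_j$ sits inside a unique parent $Q^{k-1}_i$) hold by construction.

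The metric estimates $(d)$, $(e)$ and the disjointness half of $(b)$ I would extract from a telescoping bound: if $y\in\mathcal{X}_m$ has scale-$k$ ancestor $x^k_j$ ($m\ge k$), the ancestor chain from $y$ to $x^k_j$ has $m-k$ edges, the edge joining its scale-$(r+1)$ and scale-$r$ ancestors having length $<c\,2^{-r}$, so $|y-x^k_j|<\sum_{r\ge k}c\,2^{-r}<2c\,2^{-k}$. Since $Q^k_j$ therefore lies within distance $\lesssim 2^{-k}$ of $x^k_j$, we get $Q^k_j\subset B(x^k_j,C_1\,2^{-k})$, hence $(d)$; and cubes at a scale $m\ge k$ whose scale-$k$ ancestor is not $x^k_j$ lie outside $B(x^k_j,C_1\,2^{-k})\supset Q^k_j$, hence are disjoint from $Q^k_j$, which is the disjointness half of $(b)$. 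For $(e)$ I would use that $\mathcal{X}_k\subset\mathcal{X}_m$ for all $m\ge k$, so $x^k_j$ is its own ancestor at every scale, together with the crucial ``parent $=$ nearest point'' rule: choosing $c$ small enough relative to a constant $a_0$, any $x\in E$ with $|x-x^k_j|<a_0\,2^{-k}$ keeps its whole ancestor chain within $\tfrac12$ the $\mathcal{X}_k$-separation of $x^k_j$, so the nearest-point rule forces its scale-$k$ ancestor to be $x^k_j$ at every fine scale, whence $\Delta(x^k_j,a_0\,2^{-k})\subset Q^k_j$.

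Finally, property $(f)$ (the ``small boundary'' estimate) requires a separate iteration: a point of $Q^k_j$ lying within $\tau\,2^{-k}$ of $E\setminus Q^k_j$ belongs to a descendant cube of scale $m\approx k+\log_2(1/\tau)$ that abuts $\partial Q^k_j$, and using the nesting one runs a scale-by-scale recursion showing that the union of such ``boundary-layer'' cubes loses a fixed proportion of its $H^n$-measure at each step, which gives the bound $C_1\tau^\eta H^n(Q^k_j)$ with $\eta$ governed by that proportion. I expect the main obstacle to be exactly the tension between the combinatorial tree and the metric geometry in the last two steps: one has to choose the single separation constant $c$ (and the auxiliary radii), relative to $a_0$, so that ``parent $=$ nearest net point'' makes the cubes simultaneously \emph{fat} (property $(e)$), \emph{properly nested} (properties $(b)$--$(c)$), and \emph{thin-boundaried} (property $(f)$); once that balance is struck, turning the construction into honest Borel partitions and checking the ``bounded overlap'' bookkeeping is routine.
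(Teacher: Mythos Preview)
The paper does not prove this lemma at all: it is stated with citations to \cite{christ} and \cite{davidsemmes1,davidsemmes2}, followed only by some remarks (in particular that one may take the dyadic parameter equal to $1/2$, citing \cite{HMMM}). So there is no ``paper's own proof'' to compare against; your proposal goes well beyond what the authors do, and what you sketch is precisely the Christ construction that the paper is invoking.

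Your outline is correct in its broad strokes and would, if carried out, yield the lemma. Two places deserve a bit more care if you actually write it up. First, the interplay between the nearest-point parent rule and the containment $\Delta(x^k_j,a_0 2^{-k})\subset Q^k_j$ in $(e)$ requires a quantitative buffer: you need the separation constant $c$ and $a_0$ chosen so that points within $a_0 2^{-k}$ of $x^k_j$ stay strictly closer to $x^k_j$ than to any other point of $\mathcal{X}_k$ \emph{through the entire ancestor chain}, and the telescoping sum must be bounded by strictly less than half the separation for this to work; you gesture at this but the inequality should be made explicit. Second, the small-boundary estimate $(f)$ is the genuinely delicate part of Christ's argument, and your description (``loses a fixed proportion of its $H^n$-measure at each step'') is the right intuition but hides the key lemma: one must show that for any cube $Q$ at scale $m$ adjacent to $\partial Q^k_j$, a definite fraction of its children (at scale $m+1$) are \emph{not} adjacent to $\partial Q^k_j$, and this uses both the inner-ball property $(e)$ and a careful comparison of the inner-ball radius to the separation of the nets at scale $m+1$. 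In Christ's original paper this is handled via an auxiliary labeling; in the David--Semmes version it comes from their stopping-time construction. Either way it is more than a one-line recursion, and your sketch would benefit from isolating this as a separate claim.
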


A few remarks are in order concerning this lemma.
\begin{list}{$\bullet$}{\leftmargin=0.4cm  \itemsep=0.2cm}

\item In the setting of a general space of homogeneous type, this lemma has been proved by Christ
\cite{christ}, with the dyadic parameter $1/2$ replaced by some constant $\delta \in (0,1)$.
In fact, one may always take $\delta = 1/2$ (cf.  \cite[Proof of Proposition 2.12]{HMMM}). In the presence of the Ahlfors regularity property, the result already appears in \cite{davidsemmes1, davidsemmes2}.

\item We shall denote by $\mathbb{D}(E)$ the collection of all relevant $Q_j^k$, i.e.,
$$
\mathbb{D}(E):=\bigcup_k\mathbb{D}_k,
$$
where, if $\diam(E)$ is finite, the union runs over those $k\in\mathbb{Z}$ such that $2^{-k}\lesssim\diam(E)$.
\item For a dyadic cube $Q\in\mathbb{D}_k$, we shall set $\ell(Q)=2^{-k}$, and we shall refer to this quantity as the ``length'' of $Q$. It is clear that $\ell(Q)\approx\diam(Q)$. Also, for $Q\in\mathbb{D}(E)$ we will set $k(Q)=k$ if $Q\in\mathbb{D}_k$.

\item Properties $(d)$ and $(e)$ imply that for each cube $Q\in\mathbb{D}$, there is a point $x_Q\in E$, a Euclidean ball $B(x_Q,r_Q)$ and a surface ball $\Delta(x_Q,r_Q):=B(x_Q,r_Q)\cap E$ such that $c\ell(Q)\leq r_Q\leq\ell(Q)$, for some uniform constant $c>0$, and
    \begin{equation}\label{deltaQ}
    \Delta(x_Q,2r_Q)\subset Q\subset\Delta(x_Q,Cr_Q)
    \end{equation}
    for some uniform constant $C>1$. We shall denote these balls and surface balls by
    \begin{equation}\label{deltaQ2}
    B_Q:=B(x_Q,r_Q),\qquad\Delta_Q:=\Delta(x_Q,r_Q),
    \end{equation}
    \begin{equation}\label{deltaQ3}
    \widetilde{B}_Q:=B(x_Q,Cr_Q),\qquad\widetilde{\Delta}_Q:=\Delta(x_Q,Cr_Q),
    \end{equation}
    and we shall refer to the point $x_Q$ as the ``center'' of $Q$.

\item Let $\Omega\subset\re^{n+1}$ be an open set satisfying the Corkscrew condition and such that $\partial\Omega$ is $\mathrm{AR}$. Given $Q\in\mathbb{D}(\partial\Omega)$ we define the ``corkscrew point relative to $Q$'' as $X_Q:=X_{\Delta_Q}$. We note that
    $$
    \delta(X_Q)\approx\dist(X_Q,Q)\approx\diam(Q).
    $$
\end{list}
\medskip

Following \cite[Section 3]{hofmartell} we next introduce the notion of ``Carleson region'' and ``discretized sawtooth''. Given a cube  $Q\in\mathbb{D}(E)$, the ``discretized Carleson region'' $\mathbb{D}_Q$ relative to $Q$ is defined by
$$
\mathbb{D}_Q:=\big\{Q'\in\mathbb{D}(E):\,Q'\subset Q\big\}.
$$
Let $\mathcal{F}=\{Q_i\}\subset\mathbb{D}(E)$ be a family of disjoint cubes. The ``global discretized sawtooth'' relative to $\mathcal{F}$ is the collection of cubes $Q\in\mathbb{D}(E)$ that are not contained in any $Q_i\in\mathcal{F}$, that is,
$$
\mathbb{D}_\mathcal{F}:=\mathbb{D}(E)\setminus\bigcup_{Q_i\in\mathcal{F}}\mathbb{D}_{Q_i}.
$$
For a given $Q\in\mathbb{D}(E)$, the ``local discretized sawtooth'' relative to $\mathcal{F}$ is the collection of cubes in $\mathbb{D}_Q$ that are not contained in any $Q_i\in\mathcal{F}$ or, equivalently,
$$
\mathbb{D}_{\mathcal{F},Q}:=\mathbb{D}_{Q}\setminus\bigcup_{Q_i\in\mathcal{F}}\mathbb{D}_{Q_i}=\mathbb{D}_\mathcal{F}\cap\mathbb{D}_Q.
$$

We also introduce the ``geometric'' Carleson regions and sawtooths. In the sequel, $\Omega\subset\re^{n+1}$ ($n\geq 2$) will be a 1-sided $\mathrm{CAD}$. Given $Q\in\mathbb{D}(\partial\Omega)$ we want to define some associated regions which inherit the good properties of $\Omega$. Let $\mathcal{W}=\mathcal{W}(\Omega)$ denote a collection of (closed) dyadic Whitney cubes of $\Omega\subset\re^{n+1}$, so that the cubes in $\mathcal{W}$ form a pairwise non-overlapping covering of $\Omega$, which satisfy
\begin{equation}\label{constwhitney}
4\diam(I)\leq\dist(4I,\partial\Omega)\leq\dist(I,\partial\Omega)\leq 40\diam(I),\qquad\forall I\in\mathcal{W},
\end{equation}
and
$$
\diam(I_1)\approx\diam(I_2),\,\text{ whenever }I_1\text{ and }I_2\text{ touch}.
$$
Let $X(I)$ denote the center of $I$, let $\ell(I)$ denote the sidelength of $I$, and write $k=k_I$ if $\ell(I)=2^{-k}$.

Given $0<\lambda<1$ and $I\in\mathcal{W}$ we write $I^*=(1+\lambda)I$ for the ``fattening'' of $I$. By taking $\lambda$ small enough, we can arrange matters, so that, first, $\dist(I^*,J^*)\approx\dist(I,J)$ for every $I,J\in\mathcal{W}$, and secondly, $I^*$ meets $J^*$ if and only if $\partial I$ meets $\partial J$ (the fattening thus ensures overlap of $I^*$ and $J^*$ for any pair $I,J\in\mathcal{W}$ whose boundaries touch, so that the Harnack Chain property then holds locally in $I^*\cup J^*$, with constants depending upon $\lambda$). By picking $\lambda$ sufficiently small, say $0<\lambda<\lambda_0$, we may also suppose that there is $\tau\in(1/2,1)$ such that for distinct $I,J\in\mathcal{W}$, we have that $\tau J\cap I^*=\emptyset$. In what follows we will need to work with dilations $I^{**}=(1+2\lambda)I$ or $I^{***}=(1+4\lambda)I$, and in order to ensure that the same properties hold we further assume that $0<\lambda<\lambda_0/4$.

For every $Q\in\mathbb{D}(\partial\Omega)$ we can construct a family $\mathcal{W}_Q^*\subset\mathcal{W}(\Omega)$, and define
$$
U_Q:=\bigcup_{I\in\mathcal{W}_Q^*}I^*,
$$
satisfying the following properties: $X_Q\in U_Q$ and there are uniform constants $k^*$ and $K_0$ such that
\begin{gather*}
k(Q)-k^*\leq k_I\leq k(Q)+k^*,\quad\forall I\in\mathcal{W}_Q^*,
\\[4pt]
X(I)\rightarrow_{U_Q} X_Q,\quad\forall I\in\mathcal{W}_Q^*,
\\[4pt]
\dist(I,Q)\leq K_0 2^{-k(Q)},\quad\forall I\in\mathcal{W}_Q^*.
\end{gather*}
Here, $X(I)\rightarrow_{U_Q} X_Q$ means that the interior of $U_Q$ contains all balls in a Harnack Chain (in $\Omega$) connecting $X(I)$ to $X_Q$, and moreover, for any point $Z$ contained in any ball in the Harnack Chain, we have $\dist(Z,\partial\Omega)\approx\dist(Z,\Omega\setminus U_Q)$ with uniform control of the implicit constants. The constants $k^*, K_0$ and the implicit constants in the condition $X(I)\rightarrow_{U_Q} X_Q$, depend on at most allowable parameters and on $\lambda$. Moreover, given $I\in\mathcal{W}(\Omega)$ we have that $I\in\mathcal{W}_{Q_I}^*$, where $Q_I\in\mathbb{D}(\partial\Omega)$ satisfies $\ell(Q_I)=\ell(I)$, and contains any fixed $\widehat{y}\in\partial\Omega$ such that $\dist(I,\partial\Omega)=\dist(I,\widehat{y})$. The reader is referred to \cite{hofmartell} for full details.

For a given $Q\in\mathbb{D}(\partial\Omega)$, the ``Carleson box'' relative to $Q$ is defined by
$$
T_Q:=\interior\bigg(\bigcup_{Q'\in\mathbb{D}_Q}U_{Q'}\bigg).
$$
For a given family $\mathcal{F}=\{Q_i\}$ of pairwise disjoint cubes and a given $Q\in\mathbb{D}(\partial\Omega)$, we define the ``local sawtooth region'' relative to $\mathcal{F}$ by
\begin{equation}\label{defomegafq}
\Omega_{\mathcal{F},Q}=\interior\bigg(\bigcup_{Q'\in\mathbb{D}_{\mathcal{F},Q}}U_{Q'}\bigg)=\interior\bigg(\bigcup_{I\in\mathcal{W}_{\mathcal{F},Q}}I^*\bigg),
\end{equation}
where $\mathcal{W}_{\mathcal{F},Q}:=\bigcup_{Q'\in\mathbb{D}_{\mathcal{F},Q}}\mathcal{W}_Q^*$. Analogously, we can slightly fatten the Whitney boxes and use $I^{**}$ to define new fattened Whitney regions and sawtooth domains. More precisely, for every $Q\in\mathbb{D}(\partial\Omega)$,
$$
T_Q^*:=\interior\bigg(\bigcup_{Q'\in\mathbb{D}_Q}U_{Q'}^*\bigg),\qquad\Omega^*_{\mathcal{F},Q}:=\interior\bigg(\bigcup_{Q'\in\mathbb{D}_Q}U_{Q'}^*\bigg), \qquad U_{Q}^*:=\bigcup_{I\in\mathcal{W}_Q^*}I^{**}.
$$
Similarly, we can define $T_Q^{**}$, $\Omega^{**}_{\mathcal{F},Q}$ and $U^{**}_{Q}$ by using $I^{***}$ in place of $I^{**}$.

To define  the ``Carleson box'' $T_\Delta$ associated to a surface ball $\Delta=\Delta(x,r)$, let $k(\Delta)$ denote the unique $k\in\mathbb{Z}$ such that $2^{-k-1}<200r\leq 2^{-k}$, and set
$$
\mathbb{D}^{\Delta}:=\big\{Q\in\mathbb{D}_{k(\Delta)}:\:Q\cap 2\Delta\neq\emptyset\big\}.
$$
We then set
$$
T_{\Delta}:=\interior\bigg(\bigcup_{Q\in\mathbb{D}^\Delta}\overline{T_Q}\bigg).
$$
We can also consider slight dilations of $T_\Delta$ given by
$$
T_{\Delta}^*:=\interior\bigg(\bigcup_{Q\in\mathbb{D}^\Delta}\overline{T_Q^*}\bigg),\qquad T_{\Delta}^{**}:=\interior\bigg(\bigcup_{Q\in\mathbb{D}^\Delta}\overline{T_Q^{**}}\bigg).
$$

Following \cite{hofmartell}, one can easily see that there exist constants $0<\kappa_1<1$ and $\kappa_0\geq 2C$ (with $C$ the constant in \eqref{deltaQ3}), depending only on the allowable parameters, so that
\begin{gather}\label{definicionkappa12}
\kappa_1B_Q\cap\Omega\subset T_Q\subset T_Q^*\subset T_Q^{**}\subset \overline{T_Q^{**}}\subset\kappa_0B_Q\cap\overline{\Omega}=:\tfrac{1}{2}B_Q^*\cap\overline{\Omega},
\\[6pt]
\label{definicionkappa0}
\tfrac{5}{4}B_\Delta\cap\Omega\subset T_\Delta\subset T_\Delta^*\subset T_\Delta^{**}\subset\overline{T_\Delta^{**}}\subset\kappa_0B_\Delta\cap\overline{\Omega}=:\tfrac{1}{2}B_\Delta^*\cap\overline{\Omega},
\end{gather}
and also
\begin{equation}\label{propQ0}
Q\subset\kappa_0B_\Delta\cap\partial\Omega=\tfrac{1}{2}B_\Delta^*\cap\partial\Omega=:\tfrac{1}{2}\Delta^*,\qquad\forall\,Q\in\mathbb{D}^{\Delta},
\end{equation}
where $B_Q$ is defined as in \eqref{deltaQ2}, $\Delta=\Delta(x,r)$ with $x\in\partial\Omega$, $0<r<\diam(\partial \Omega)$, and $B_{\Delta}=B(x,r)$ is so that $\Delta=B_\Delta\cap\partial\Omega$.
\medskip

\subsection{\texorpdfstring{$A_\infty$}{Ainf} weights and Carleson measures}
Throughout this section, $E \subset\re^{n+1}$ will be an $n$-dimensional $\mathrm{AR}$ set and $\sigma=H^{n}\rest{E}$.

\begin{definition}[$A_\infty$ and $A_{\infty}^{\rm dyadic}$]
Given a surface ball $\Delta_0=B_0\cap E$, with $B_0=B(x_0,r_0)$, $x_0\in E$, $0<r<\diam(E)$, a Borel measure $\omega$ defined on $\Delta_0$ is said to belong to $A_\infty(\Delta_0)$ if there exist constants $0<\alpha,\beta<1$ such that for every surface ball $\Delta=B\cap E$ centered at $E$ with $B\subset B_0$, and for every Borel set $F\subset\Delta$, we have that
$$
\frac{\sigma(F)}{\sigma(\Delta)}>\alpha\implies
\frac{\omega(F)}{\omega(\Delta)}>\beta.
$$

Given $Q_0\in\mathbb{D}(E)$, a Borel measure $\omega$ defined on $Q_0$ is said to belong to $A_\infty^{\rm dyadic}(Q_0)$ if there exist constants $0<\alpha,\beta<1$ such that for every $Q\in\mathbb{D}_{Q_0}$ and for every Borel set $F\subset Q$, we have that
$$
\frac{\sigma(F)}{\sigma(Q)}>\alpha\implies
\frac{\omega(F)}{\omega(Q)}>\beta.
$$
\end{definition}

It is well known (see \cite{MR807149}, \cite{coifman1974}) that since $\sigma$ is a doubling measure (recall that $E$ satisfies the $\mathrm{AR}$ condition), $\omega\in A_\infty(\Delta_0)$ if and only if $\omega\ll\sigma$ in $\Delta_0$  and there exists $1<p<\infty$ such that $\omega\in RH_p(\Delta_0)$, that is, there is a constant $C_1>1$ such that
$$
\bigg(\barint_{\Delta} k(x)^{p}\,d\sigma(x)\bigg)^{\frac{1}{p}}\leq
C_1\barint_{\Delta}k(x)\,d\sigma(x),
$$
for every $\Delta=B\cap E$ centered at $E$ with $B\subset B_0$, and where $k=d\omega/d\sigma$ is the Radon-Nikodym derivative.  Analogously, $\omega\in A_\infty^{\rm dyadic}(Q_0)$
 if and only if $\omega\ll\sigma$ in $Q_0$  and there exists $1<p<\infty$ such that $\omega\in RH_p^{\rm dyadic}(Q_0)$, that is,  there is a constant $C_1>1$ such that
$$
\bigg(\barint_{Q} k(x)^{p}\,d\sigma(x)\bigg)^{\frac{1}{p}}\leq
C_1\barint_{Q}k(x)\,d\sigma(x),
$$
for every  $Q\in\mathbb{D}_{Q_0}$, where again $k=d\omega/d\sigma$.

%
%
%
%
%
%
\medskip

Fix $Q_0\in\mathbb{D}(E)$. For each $\mathcal{F}=\{Q_i\}\subset\mathbb{D}_{Q_0}$, a family of pairwise disjoint dyadic cubes, and each $f$ locally integrable, we define
$$
\mathcal{P}_{\mathcal{F}} f(x)=f(x)\mathbf{1}_{E\setminus\big(\bigcup_i
Q_i\big)}(x)+\sum_{Q_i\in\mathcal{F}}\Big(\barint_{Q_i}f(y)\,d\sigma(y)\Big)\mathbf{1}_{Q_i}(x).
$$
If $\omega$ is a non-negative Borel measure on $Q_0$, we may naturally then define the measure $\mathcal{P}_\mathcal{F}\omega$ as $\mathcal{P}_{\mathcal{F}}\omega(F)=\int_{E}\mathcal{P}_{\mathcal{F}}\mathbf{1}_F\,d\omega$, that is,
\begin{equation}\label{defprojection}
\mathcal{P}_{\mathcal{F}}\omega(F)=\omega\Big(F\setminus\bigcup_{Q_i\in\mathcal{F}}Q_i\Big)+\sum_{Q_i\in\mathcal{F}}\frac{\sigma(F\cap Q_i)}{\sigma(Q_i)}\omega(Q_i),
\end{equation}
for each Borel set $F\subset Q_0$.

The next result follows easily  by combining the arguments in  \cite[Lemma B.1]{hofmartell} and \cite[Lemma 4.1]{MR2655385}

\begin{lemma}\label{lemm_w-Pw-:properties}
Let $\omega$ be a non-negative Borel measure on $Q_0\in\mathbb{D}(E)$.

\begin{list}{$(\theenumi)$}{\usecounter{enumi}\leftmargin=1cm \labelwidth=1cm \itemsep=0.1cm \topsep=.2cm \renewcommand{\theenumi}{\alph{enumi}}}

\item If $\omega$ is dyadically doubling on $Q_0$ ---that is, there exists $C_\omega> 1$ such that
$\omega(Q)\leq C_\omega\,\omega(Q')$ for every $Q\in\mathbb{D}_{Q_0}$ and every $Q'\in\mathbb{D}_Q$ such that $\ell(Q')=\ell(Q)/2$--- then $\mathcal{P}_{\mathcal{F}}\omega$  is dyadically doubling on $Q_0$.

\item If  $\omega\in A_\infty^{\rm dyadic}(Q_0)$  then $\mathcal{P}_{\mathcal{F}}\omega\in A_\infty^{\rm dyadic}(Q_0)$.
\end{list}

\end{lemma}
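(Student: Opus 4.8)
The plan is to prove Lemma \ref{lemm_w-Pw-:properties} by reducing both statements to the corresponding assertions in \cite[Lemma B.1]{hofmartell} and \cite[Lemma 4.1]{MR2655385}, which handle essentially the same projection operator $\mathcal{P}_{\mathcal{F}}$; the point of the present lemma is simply to combine the dyadic-doubling part from one reference with the $A_\infty^{\rm dyadic}$ part from the other. Concretely, for part $(a)$ I would argue directly: fix $Q\in\mathbb{D}_{Q_0}$ and a dyadic child $Q'$ of $Q$, and split into cases according to how $Q$ interacts with the stopping family $\mathcal{F}=\{Q_i\}$. If $Q$ is contained in some $Q_i\in\mathcal{F}$, then by \eqref{defprojection} the measure $\mathcal{P}_{\mathcal{F}}\omega$ restricted to $Q$ is just a constant multiple of $\sigma$ (namely $\sigma(\,\cdot\cap Q_i)\,\omega(Q_i)/\sigma(Q_i)$), hence dyadically doubling with the $\mathrm{AR}$-doubling constant of $\sigma$; if no ancestor relation forces this, one checks that $\mathcal{P}_{\mathcal{F}}\omega(Q)$ and $\mathcal{P}_{\mathcal{F}}\omega(Q')$ are each comparable, up to the doubling constant $C_\omega$ of $\omega$ and the doubling constant of $\sigma$, to $\omega$ of the appropriate sets. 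The remaining borderline case is when $Q'$ is itself one of the $Q_i$ (or strictly contains some $Q_i$'s while $Q$ does not): here one uses that $\mathcal{P}_{\mathcal{F}}\omega(Q')=\omega(Q')$ in the former subcase, or writes $\mathcal{P}_{\mathcal{F}}\omega(Q')$ as $\omega(Q'\setminus\bigcup Q_i)+\sum \sigma(Q'\cap Q_i)\omega(Q_i)/\sigma(Q_i)$ and bounds each term below by a dimensional multiple of the corresponding piece of $\mathcal{P}_{\mathcal{F}}\omega(Q)$, using $\sigma(Q')\approx\sigma(Q)$ and $\omega(Q')\geq C_\omega^{-1}\omega(Q)$ (valid since $Q'$ is a child of $Q$ and $\omega$ is dyadically doubling).

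For part $(b)$, the plan is to invoke the characterization recalled in the excerpt: $\omega\in A_\infty^{\rm dyadic}(Q_0)$ iff $\omega\ll\sigma$ on $Q_0$ and $\omega\in RH_p^{\rm dyadic}(Q_0)$ for some $p>1$, with kernel $k=d\omega/d\sigma$. Given such $\omega$, I first note that $\omega\in A_\infty^{\rm dyadic}(Q_0)$ in particular implies $\omega$ is dyadically doubling on $Q_0$, so by part $(a)$ the projected measure $\mathcal{P}_{\mathcal{F}}\omega$ is dyadically doubling as well; thus it suffices to verify a dyadic reverse Hölder inequality for $\mathcal{P}_{\mathcal{F}}\omega$. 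From \eqref{defprojection} one sees that $\mathcal{P}_{\mathcal{F}}\omega\ll\sigma$ on $Q_0$ with Radon--Nikodym derivative
\[
\mathcal{P}_{\mathcal{F}}k(x)=k(x)\mathbf{1}_{Q_0\setminus(\bigcup_i Q_i)}(x)+\sum_{Q_i\in\mathcal{F}}\Big(\barint_{Q_i}k\,d\sigma\Big)\mathbf{1}_{Q_i}(x),
\]
i.e.\ $\mathcal{P}_{\mathcal{F}}k=\mathcal{P}_{\mathcal{F}}(k)$ in the notation for functions introduced before the lemma. Then for any $Q\in\mathbb{D}_{Q_0}$ one estimates $\barint_Q(\mathcal{P}_{\mathcal{F}}k)^p\,d\sigma$ by splitting $Q$ into the part outside all $Q_i$ and the cubes $Q_i\subset Q$ (plus, if $Q\subset Q_i$ for some $i$, the trivial case where $\mathcal{P}_{\mathcal{F}}k$ is constant on $Q$); on the part outside the $Q_i$ one uses $\mathcal{P}_{\mathcal{F}}k=k$ and the $RH_p^{\rm dyadic}$ bound for $\omega$, while on each $Q_i$ one has, by Jensen's inequality and again $RH_p^{\rm dyadic}$ applied on $Q_i$, $\big(\barint_{Q_i}k\,d\sigma\big)^p\leq\barint_{Q_i}k^p\,d\sigma\lesssim\big(\barint_{Q_i}k\,d\sigma\big)^p$. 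Summing and using $\sigma(Q_i)=\int_{Q_i}\mathcal{P}_{\mathcal{F}}k\,d\sigma\cdot\sigma(Q_i)/\omega(Q_i)\cdots$ — more cleanly, using $\int_{Q_i}\mathcal{P}_{\mathcal{F}}k\,d\sigma=\omega(Q_i)=\int_{Q_i}k\,d\sigma$ and, on the complement, $\int\mathcal{P}_{\mathcal{F}}k\,d\sigma=\omega(\,\cdot\,\setminus\bigcup Q_i)$ — one gets $\barint_Q(\mathcal{P}_{\mathcal{F}}k)^p\,d\sigma\lesssim\big(\barint_Q\mathcal{P}_{\mathcal{F}}k\,d\sigma\big)^p$ with constant depending only on the $RH_p$ constant $C_1$ of $\omega$ and on the $\mathrm{AR}$ constant. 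This is precisely $\mathcal{P}_{\mathcal{F}}\omega\in RH_p^{\rm dyadic}(Q_0)$, hence $\mathcal{P}_{\mathcal{F}}\omega\in A_\infty^{\rm dyadic}(Q_0)$.

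I expect the only genuinely delicate point to be the bookkeeping in part $(a)$ around cubes $Q$ that straddle the stopping family — that is, cubes containing some but not all of a "generation" of $Q_i$'s, or cubes whose child first enters the family — where one must be careful that the comparison constants stay controlled by $C_\omega$ and the doubling constant of $\sigma$ alone, and do not accumulate along chains. Everything else is a routine combination of Jensen's inequality, the $\mathrm{AR}$/doubling properties of $\sigma$, and the already-cited lemmas; indeed, since the statement asserts the result ``follows easily by combining'' \cite[Lemma B.1]{hofmartell} and \cite[Lemma 4.1]{MR2655385}, the cleanest write-up is to observe that \cite[Lemma B.1]{hofmartell} gives $(a)$ verbatim (its projection is the same $\mathcal{P}_{\mathcal{F}}$) while \cite[Lemma 4.1]{MR2655385} gives the $RH_p$-stability underlying $(b)$, and then to supply the short dictionary between the two formulations above. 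If desired one can instead present the self-contained computation sketched here, which has the mild advantage of making explicit the dependence of constants for later use.
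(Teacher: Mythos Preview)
Your proposal is correct and matches the paper's approach: the paper itself provides no detailed proof, only the sentence ``follows easily by combining the arguments in \cite[Lemma B.1]{hofmartell} and \cite[Lemma 4.1]{MR2655385},'' which is exactly the reduction you propose; your self-contained sketch is a faithful unpacking of those arguments. One remark: the ``delicate bookkeeping'' you anticipate in part $(a)$ does not in fact arise, because whenever $Q\in\mathbb{D}_{Q_0}$ is not contained in any $Q_i\in\mathcal{F}$ one has $\mathcal{P}_{\mathcal{F}}\omega(Q)=\omega(Q)$ exactly (each $Q_i$ meeting $Q$ is then entirely contained in $Q$, so the fractions $\sigma(Q\cap Q_i)/\sigma(Q_i)$ are all $0$ or $1$), and the same holds for any child $Q'$ of such a $Q$; thus the doubling ratio is either $\omega(Q)/\omega(Q')\le C_\omega$ or, when $Q\subset Q_i$, the $\sigma$-doubling ratio $\sigma(Q)/\sigma(Q')$, with no intermediate case.
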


Let $\{\gamma_Q\}_{Q\in\mathbb{D}(E)}$ be a sequence of non-negative real numbers. We define the ``measure'' $\mathfrak{m}$ (acting on collections of dyadic cubes) by
$$
\mathfrak{m}(\mathbb{D}')=\sum_{Q\in\mathbb{D'}}\gamma_Q,\qquad \mathbb{D'}\subset\mathbb{D}(E).
$$
Let $Q_0\in\mathbb{D}(E)$, we say that $\mathfrak{m}$ is a discrete ``Carleson measure'' on $Q_0$ (with respect to $\sigma$) or, equivalently, $\mathfrak{m}\in\mathcal{C}(Q_0)$ if
$$
\|\mathfrak{m}\|_{\mathcal{C}(Q_0)}:=\sup_{Q\in\mathbb{D}_{Q_0}}\frac{\mathfrak{m}(\mathbb{D}_{Q})}{\sigma(Q)}<\infty.
$$
Given $\mathcal{F}=\{Q_i\}\subset\mathbb{D}_{Q_0}$, a family of pairwise disjoint dyadic cubes, we define $\mathfrak{m}_\mathcal{F}$ by
$$
\mathfrak{m}_{\mathcal{F}}(\mathbb{D}')=\mathfrak{m}(\mathbb{D}'\cap\mathbb{D}_\mathcal{F})=\sum_{Q\in\mathbb{D}'\cap\mathbb{D}_\mathcal{F}}\gamma_Q,\qquad \mathbb{D}'\subset\mathbb{D}_{Q_0}.
$$
Equivalently, the measure $\mathfrak{m}_\mathcal{F}$ is given by the sequence $\{\gamma_{\mathcal{F},Q}\}_{Q\in\mathbb{D}_{Q_0}}$, where
\begin{equation}\label{gammauxiliar}
\gamma_{\mathcal{F},Q}=\left\{
                   \begin{array}{ll}
                     \gamma_Q & \hbox{ if $Q\in\mathbb{D}_{\mathcal{F},Q_0}$,} \\[5pt]
                     0 & \hbox{ if $Q\in\mathbb{D}_{Q_0}\setminus\mathbb{D}_{\mathcal{F},Q_0}$.}
                   \end{array}
                 \right.
\end{equation}
\medskip

\begin{lemma}[{\cite[Lemma 8.5]{hofmartell}}]\label{extrapolation}
Suppose that $E\subset\re^{n+1}$ is $n$-dimensional $\mathrm{AR}$. Fix $Q_0\in\mathbb{D}(E)$, let $\sigma$, $\omega$ be a pair of non-negative dyadically doubling Borel measures on $Q_0$, and let $\mathfrak{m}$ be a discrete Carleson measure with respect to $\sigma$, with
$$
\|\mathfrak{m}\|_{\mathcal{C}(Q_0)}\leq M_0.
$$
Suppose that there exists $\gamma>0$ such that for every $Q\in\mathbb{D}_{Q_0}$ and every family of pairwise disjoint dyadic cubes $\mathcal{F}=\{Q_i\}\subset\mathbb{D}_Q$ verifying
$$
\|\mathfrak{m}_\mathcal{F}\|_{\mathcal{C}(Q)}=\sup_{Q'\in\mathbb{D}_{Q}}\frac{\mathfrak{m}(\mathbb{D}_{\mathcal{F},Q'})}{\sigma(Q')}\leq\gamma,
$$
we have that $\mathcal{P}_{\mathcal{F}}\omega$ satisfies the following property:
$$
\forall\varepsilon\in(0,1)\quad\exists\,C_\varepsilon>1\text{ such that }\Big(F\subset Q,\quad\frac{\sigma(F)}{\sigma(Q)}\geq\varepsilon\implies\frac{\mathcal{P}_{\mathcal{F}}\omega(F)}{\mathcal{P}_{\mathcal{F}}\omega(Q)}\geq\frac{1}{C_\varepsilon} \Big).
$$
Then, there exist $\eta_0\in(0,1)$ and $C_0<\infty$ such that, for every $Q\in\mathbb{D}_{Q_0}$
$$
F\subset Q,\quad\frac{\sigma(F)}{\sigma(Q)}\geq 1-\eta_0\implies\frac{\omega(F)}{\omega(Q)}\geq\frac{1}{C_0}.
$$
In other words, $\omega\in A_{\infty}^{\rm dyadic}(Q_0)$.
\end{lemma}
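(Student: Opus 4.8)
The plan is to prove this by the standard \emph{extrapolation of Carleson measures} scheme: a corona (stopping‑time) decomposition of $\mathbb{D}_{Q_0}$ adapted to $\mathfrak{m}$, followed by a bootstrapping argument. First I would observe that it suffices to prove the conclusion at the top cube $Q_0$: for any $Q\in\mathbb{D}_{Q_0}$ the restriction of $\mathfrak{m}$ to $\mathbb{D}_Q$ is again a discrete Carleson measure with $\|\mathfrak{m}\|_{\mathcal{C}(Q)}\le M_0$, and the hypothesis on $\mathcal{P}_{\mathcal{F}}\omega$ is automatically inherited by every $Q\in\mathbb{D}_{Q_0}$ and every pairwise disjoint $\mathcal{F}\subset\mathbb{D}_Q$; hence the argument performed at an arbitrary $Q$ yields $\eta_0$ and $C_0$ depending only on $M_0$, $\gamma$, the dyadic doubling constants of $\sigma$ and $\omega$, and the family $\{C_\varepsilon\}$, uniformly in $Q$.

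Next I would set up the corona decomposition. Put $\mathcal{F}_0:=\{Q_0\}$, and given a cube $R$ in the $k$‑th generation, let $\mathcal{F}(R)$ be the family of maximal cubes $Q'\in\mathbb{D}_R$ at which the local Carleson mass of $\mathfrak{m}$ first exceeds the threshold $\gamma$; by maximality one then has $\|\mathfrak{m}_{\mathcal{F}(R)}\|_{\mathcal{C}(R)}\le\gamma$, so the hypothesis of the lemma applies to the pair $(R,\mathcal{F}(R))$ and $\mathcal{P}_{\mathcal{F}(R)}\omega$ satisfies the weak‑$A_\infty$ estimate on $R$ with the constants $C_\varepsilon$. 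Set $\mathcal{F}_{k+1}:=\bigcup_{R\in\mathcal{F}_k}\mathcal{F}(R)$. The bookkeeping facts I would record are: (i) $\mathcal{P}_{\mathcal{F}(R)}\omega$ equals $\omega$ off $\bigcup_{Q'\in\mathcal{F}(R)}Q'$, equals $\tfrac{\sigma(\,\cdot\,\cap Q')}{\sigma(Q')}\,\omega(Q')$ on each $Q'\in\mathcal{F}(R)$, and has total mass $\mathcal{P}_{\mathcal{F}(R)}\omega(R)=\omega(R)$; and (ii) the packing bound $\|\mathfrak{m}\|_{\mathcal{C}(Q_0)}\le M_0$ controls the total $\sigma$‑measure swept out by the stopping cubes across generations, which is what makes the scheme terminate.

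Then I would run the bootstrap. Fix $F\subset Q_0$ with $\sigma(F)/\sigma(Q_0)\ge 1-\eta_0$, with $\eta_0$ small and to be chosen. Applying the weak‑$A_\infty$ estimate for $\mathcal{P}_{\mathcal{F}_1}\omega$ on $Q_0$ (with $\varepsilon$ close to $1$, permissible once $\eta_0\le 1-\varepsilon$) gives $\mathcal{P}_{\mathcal{F}_1}\omega(F)\ge C_\varepsilon^{-1}\,\omega(Q_0)$; expanding the left‑hand side by fact (i) controls $\omega\big(F\setminus\bigcup\mathcal{F}_1\big)$ together with a $\sigma$‑weighted sum over the first‑generation cubes. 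I would split $\mathcal{F}_1$ into the ``good'' cubes, where $F$ still has $\sigma$‑density at least $1-\sqrt{\eta_0}$ (so the same argument can be reapplied inside such a cube), and the ``bad'' cubes, whose total $\sigma$‑measure is $\le\sqrt{\eta_0}\,\sigma(Q_0)$ by Chebyshev, and recurse down the stopping tree. Telescoping the resulting inequalities over the generations, using the geometric smallness produced by the density drop on the bad cubes together with the dyadic doubling of $\omega$ to convert the residual $\sigma$‑small sets into $\omega$‑small error, one reaches $\omega(F)\ge C_0^{-1}\,\omega(Q_0)$ for a suitable small $\eta_0$ --- which is exactly the asserted dyadic $A_\infty$ property.

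The step I expect to be the main obstacle is the last one: closing the recursion with all error terms under control. The delicate point is that the hypothesis only furnishes \emph{lower} bounds for $\mathcal{P}_{\mathcal{F}}\omega$ of \emph{large} sets, so one must track the complement $Q_0\setminus F$ and keep its $\omega$‑mass strictly below $\omega(Q_0)$; the portions of $Q_0\setminus F$ that migrate into the stopping cubes are controlled only in $\sigma$‑measure, not directly in $\omega$‑measure, so one needs the doubling of $\omega$ and a careful ordering of the parameter choices ($\varepsilon$ first, then $\eta_0$, then the number of generations) so that the geometric series of errors sums to something strictly smaller than $\omega(Q_0)$ and can be absorbed. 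The remaining ingredients --- the maximality argument giving $\|\mathfrak{m}_{\mathcal{F}(R)}\|_{\mathcal{C}(R)}\le\gamma$, the $\sigma$‑packing bound for the stopping cubes, and the elementary expansions of $\mathcal{P}_{\mathcal{F}}\omega$ --- are routine. This is precisely the argument carried out in \cite[Lemma 8.5]{hofmartell}.
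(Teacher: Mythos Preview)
The paper does not prove this lemma; it is quoted verbatim from \cite[Lemma~8.5]{hofmartell} and used as a black box. Your sketch is a faithful high-level outline of the extrapolation-of-Carleson-measures argument that appears in that reference (and, in closely related forms, in \cite{MR2833577,MR2655385}): corona/stopping-time decomposition to peel off layers where the restricted Carleson norm drops below $\gamma$, then an inductive bootstrap that pushes the conclusion from Carleson norm $\le\gamma$ up to Carleson norm $\le M_0$. You also correctly flag the genuinely delicate step, namely closing the recursion while controlling the $\omega$-mass of the error.

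One point worth sharpening: your description of the stopping rule (``maximal cubes at which the local Carleson mass first exceeds $\gamma$'') is ambiguous, and the claim that maximality alone yields $\|\mathfrak{m}_{\mathcal{F}(R)}\|_{\mathcal{C}(R)}\le\gamma$ together with the packing bound is not automatic. In the actual argument one stops at the maximal $Q'\in\mathbb{D}_R$ for which the mass \emph{accumulated strictly between $R$ and $Q'$} exceeds a fraction of $\gamma$; this simultaneously guarantees that the sawtooth norm stays below $\gamma$ and that each stopping cube captures a definite amount of \emph{fresh} $\mathfrak{m}$-mass, which is what makes the generations pack against $M_0\,\sigma(Q_0)$. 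With the stopping rule made precise in this way, the rest of your outline goes through as in \cite{hofmartell}.
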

\medskip

\subsection{PDE estimates}
Next, we recall several facts concerning elliptic measure and Green functions. For our first results we will only assume that $\Omega\subset\re^{n+1}$ is an open set, not necessarily connected, with $\partial\Omega$ satisfying the $\mathrm{AR}$ property. Later we will focus on the case where $\Omega$ is a 1-sided $\mathrm{CAD}$.

\begin{definition}\label{ellipticoperator}
We say that $L$ is a real symmetric elliptic operator if $Lu=-\div(A\nabla u)$, with $A(X)=(a_{i,j}(X))_{i,j=1}^{n+1}$ being a real symmetric matrix such that $a_{i,j}\in L^{\infty}(\Omega)$ and there exists $\Lambda\geq 1$ such that the following uniform ellipticity condition holds
\begin{equation}\label{eq:elliptic}
\Lambda^{-1}|\xi|^2\leq A(X)\xi\cdot\xi\leq\Lambda|\xi|^2,\qquad\xi\in\re^{n+1},\quad\text{for a.e. }X\in\Omega.
\end{equation}
\end{definition}

In what follows we will only be working with this kind of operators, we will refer to them as ``elliptic operators'' for the sake of simplicity. Associated with $L$ one can construct an elliptic measure $\{\omega_L^X\}_{X\in\Omega}$ and a Green function $G_L$ (see \cite{hofmartellgeneral} for full details). Sometimes, in order to emphasize the dependence on $\Omega$, we will write $\omega_{L,\Omega}$ and $G_{L,\Omega}$.

\begin{lemma}\label{bourgain}
Suppose that $\Omega\subset\re^{n+1}$ is an open set such that $\partial\Omega$ satisfies the $\mathrm{AR}$ property. Let $L$ be an elliptic operator, there exist constants $c_1<1$ and $C_1>1$ (depending only on the $\mathrm{AR}$ constant and on the ellipticity of $L$) such that for every $x\in\partial\Omega$ and every $0<r<\diam(\partial\Omega)$, we have
$$
\omega_L^Y(\Delta(x,r))\geq \frac{1}{C_1},\qquad\forall\, Y\in B(x,c_1r)\cap\Omega.
$$
\end{lemma}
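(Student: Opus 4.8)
The plan is to reduce the estimate, via two maximum-principle comparisons, to a lower bound for an elliptic capacitary potential of a ``thick'' piece of $\partial\Omega$, and then to exploit that the Ahlfors regularity of $\partial\Omega$ forces that potential to be bounded below near $x$ at \emph{every} scale. This is essentially the classical chain ``$\mathrm{AR}\Rightarrow$ capacity density condition $\Rightarrow$ Bourgain-type estimate''; the statement may also simply be quoted from the references in \cite{hofmartellgeneral}, but I sketch an argument.

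Fix $x\in\partial\Omega$, $0<r<\diam(\partial\Omega)$, set $\Omega^*:=\Omega\cap B(x,r)$, and let $K:=\partial\Omega\cap\overline{B(x,r/2)}$, a compact subset of $\Delta(x,r)$. First I would compare elliptic measures in $\Omega^*$. The function $Y\mapsto\omega_{L,\Omega}^Y(\Delta(x,r))$ is a nonnegative $L$-solution in $\Omega^*$ whose boundary values on $\partial\Omega^*$ are at least $\mathbf 1_{\Delta(x,r)}$: indeed $\mathrm{AR}$ makes every point of $\partial\Omega$ regular, and the points of $\overline\Omega\cap\partial B(x,r)$ are trivially regular for $\Omega^*$. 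Since $\omega_{L,\Omega^*}^\cdot(\Delta(x,r))$ has boundary data exactly $\mathbf 1_{\Delta(x,r)}$ on $\partial\Omega^*$, the maximum principle gives $\omega_{L,\Omega}^Y(\Delta(x,r))\ge\omega_{L,\Omega^*}^Y(\Delta(x,r))$ for $Y\in\Omega^*$. Next, let $p$ be the $L$-capacitary potential of $K$ relative to $B(x,r)$, so $0\le p\le1$, $Lp=0$ in $B(x,r)\setminus K\supset\Omega^*$, $p=1$ on $K$, and $p=0$ on $\partial B(x,r)$. On $\partial\Omega^*$ one has $\omega_{L,\Omega^*}^\cdot(\Delta(x,r))=1\ge p$ on $\Delta(x,r)$ and $\ge0=p$ on $\overline\Omega\cap\partial B(x,r)$, so a second comparison yields $\omega_{L,\Omega^*}^Y(\Delta(x,r))\ge p(Y)$ in $\Omega^*$. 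Thus it suffices to prove $p(Y)\gtrsim 1$ for $Y\in B(x,c_1r)\cap\Omega$, with $c_1\in(0,1)$ small depending only on the allowable parameters.

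To prove this I would use Ahlfors regularity twice. On the one hand, a standard energy estimate --- testing with $\sigma$ restricted to a surface ball and using $\iint|y-z|^{1-n}\,d\sigma(y)\,d\sigma(z)\lesssim s^{\,n+1}$ at scale $s$, together with the comparison of the $L$-capacity with the Newtonian one --- yields the capacity density condition: $\mathrm{Cap}_L\big(\partial\Omega\cap\overline{B(\xi,s)},B(\xi,2s)\big)\gtrsim s^{\,n-1}$ for every $\xi\in\partial\Omega$ and $0<s<\diam(\partial\Omega)$. On the other hand, given $Y_0\in\Omega\cap B(x,c_1r)$, set $\rho_0:=\delta(Y_0)<c_1r$ and choose $\xi_0\in\partial\Omega$ with $|Y_0-\xi_0|=\rho_0$; then $\xi_0\in K$ and, provided $c_1$ is small, $\overline{B(\xi_0,N\rho_0)}\subset B(x,r)$ for a large constant $N=N(n,\Lambda)$ to be fixed. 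Let $q$ be the $L$-capacitary potential of $F:=\partial\Omega\cap\overline{B(\xi_0,\rho_0)}$ relative to $B(\xi_0,N\rho_0)$; since $F\subset K$, the maximum principle in $B(\xi_0,N\rho_0)\setminus F$ gives $p(Y_0)\ge q(Y_0)$. Writing $q(Y_0)=\int_F G_{L,B(\xi_0,N\rho_0)}(Y_0,z)\,d\mu_F(z)$ with $\mu_F$ the equilibrium measure of $F$ --- so $\mu_F(F)=\mathrm{Cap}_L(F,B(\xi_0,N\rho_0))\gtrsim\rho_0^{\,n-1}$ by the capacity density condition --- and using $G_{L,B(\xi_0,N\rho_0)}(Y_0,z)\gtrsim\rho_0^{\,1-n}$ uniformly for $z\in F$, we conclude $p(Y_0)\ge q(Y_0)\gtrsim\rho_0^{\,1-n}\mu_F(F)\gtrsim1$, which finishes the proof after unwinding the comparisons.

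The main obstacle is this last step. One cannot use the equilibrium measure of $K$ at the top scale $r$, since its mass need not concentrate near $x$; so one must descend to the scale $\rho_0=\delta(Y_0)$, where Ahlfors regularity again furnishes thickness. Moreover, the Green-function lower bound $G_{L,B(\xi_0,N\rho_0)}(Y_0,z)\gtrsim\rho_0^{\,1-n}$ must hold for \emph{all} $z$ in the surface ball $F$ --- not only those at distance $\lesssim\rho_0$ from $Y_0$ --- which requires both choosing the ambient radius $N\rho_0$ large enough (depending on $\Lambda$) so that the harmonic correction to the two-sided bound $\Gamma_L(Y,z)\approx|Y-z|^{1-n}$ for the fundamental solution is absorbed, and a single-scale Harnack chain argument for $G_L(\cdot,z)$ around the pole $z$. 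Everything else is routine maximum-principle bookkeeping and a classical capacity-density computation.
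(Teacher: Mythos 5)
The paper offers no proof of this lemma: it is quoted from the literature, with pointers to \cite{bou} for the harmonic case, to \cite{hofmartellgeneral} for general elliptic operators, and to \cite[Theorem 6.18]{HKM} and \cite[Section 3]{ZHAO}. Your sketch is correct and follows exactly the route of those references: Ahlfors regularity gives the capacity density condition via the energy lower bound $\mathrm{Cap}(E,B)\ge\mu(E)^2\big/\iint|y-z|^{1-n}d\mu\,d\mu$ applied to $\mu=\sigma\rest{\Delta(\xi,s)}$, and the CDC upgrades to the pointwise potential bound $p(Y_0)\gtrsim 1$ by descending to the scale $\rho_0=\delta(Y_0)$, representing the capacitary potential against the equilibrium measure, and using the Gr\"uter--Widman lower bound $G_{L,B(\xi_0,N\rho_0)}(Y_0,z)\gtrsim\rho_0^{1-n}$, which is legitimate for all $z\in F$ once $N$ is large since $|Y_0-z|\le 2\rho_0\le\theta\dist(Y_0,\partial B(\xi_0,N\rho_0))$. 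The constants work out ($c_1<1/(N+2)$ puts $\overline{B(\xi_0,N\rho_0)}$ inside $B(x,r)$ and $F\subset K$), and the two maximum-principle reductions go in the right direction. The only point to tighten is that the comparisons involving $\omega^{\cdot}(\Delta(x,r))$ cannot literally invoke boundary values of the discontinuous datum $\mathbf 1_{\Delta(x,r)}$; one should sandwich with a continuous $f\in C_c(\partial\Omega)$, $\mathbf 1_{\overline{\Delta(x,r/2)}}\le f\le\mathbf 1_{\Delta(x,r)}$, and use Wiener regularity of every boundary point (itself a consequence of the CDC) so that $\int f\,d\omega_L^{\cdot}$ has continuous boundary values; since your compact set $K$ lives at scale $r/2$ anyway, this costs nothing. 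For comparison, Bourgain's original argument is slightly more self-contained: it uses the normalized Riesz potential $v(Y)=c\,r^{-1}\int_{\Delta(x,r/2)}|Y-z|^{1-n}\,d\sigma(z)$ directly as a barrier (bounded above by the upper AR bound, bounded below near $x$ by the lower AR bound), thereby avoiding equilibrium measures and the capacity formalism altogether; both routes use the same two AR inequalities in the same places.
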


We refer the reader to \cite[Lemma 1]{bou} for the proof in the harmonic case and to \cite{hofmartellgeneral} for general elliptic operators. See also \cite[Theorem 6.18]{HKM} and \cite[Section 3]{ZHAO}.
\medskip

A proof of the next lemma may be found in \cite{hofmartellgeneral}. We note that, in particular, the $\mathrm{AR}$ hypothesis implies that $\partial\Omega$ satisfies the Capacity Density Condition, hence $\partial\Omega$ is Wiener regular at every point (see \cite[Lemma 3.27]{hoflemartellnystrom}).

\begin{lemma}
Suppose that $\Omega\subset\re^{n+1}$ is an open set such that $\partial\Omega$ satisfies the $\mathrm{AR}$ property. Given an elliptic operator $L$, there exist $C>1$ (depending only on dimension and on the ellipticity of $L$) and $c_\theta>0$ (depending on the above parameters and on $\theta\in (0,1)$) such that $G_L$,  the Green function associated with $L$, satisfies
\begin{gather}\label{sizestimate}
G_L(X,Y)\leq C|X-Y|^{1-n};
\\[0.15cm]
c_\theta|X-Y|^{1-n}\leq G_L(X,Y),\quad\text{if }\,|X-Y|\leq\theta\delta(X),\quad\theta\in(0,1);
\\[0.15cm]
G_L(X,\cdot)\in C\big(\overline{\Omega}\setminus\{X\}\big)\quad\text{and}\quad G(X,\cdot)\rest{\partial\Omega}\equiv 0\quad\forall X\in\Omega;
\\[0.15cm]
G_L(X,Y)\geq 0,\quad\forall X,Y\in\Omega,\quad X\neq Y;
\\[0.15cm]
G_L(X,Y)=G_L(Y,X),\quad\forall X,Y\in\Omega,\quad X\neq Y;
\end{gather}
and for every $\varphi\in C_c^{\infty}(\re^{n+1})$ we have that
$$
\int_{\partial\Omega}\varphi\,d\omega_L^X-\varphi(X)=-\iint_{\Omega}A(Y)\nabla_YG_L(Y,X)\cdot\nabla\varphi(Y)\,dY,\quad\text{for a.e. }X\in\Omega.
$$
\end{lemma}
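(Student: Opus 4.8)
\emph{Proof strategy.} The plan is to build $G_L$ by exhausting $\Omega$ with nice bounded domains, to establish the quantitative bounds on the approximants uniformly in the exhaustion, and then to pass to the limit; the behaviour of $G_L$ on $\partial\Omega$ is recovered from the Wiener regularity furnished by the $\mathrm{AR}$ (hence CDC) hypothesis recorded above. Concretely, fix an increasing sequence of bounded smooth domains $\Omega_j\uparrow\Omega$ and, for $X\in\Omega_j$, let $G_j(\cdot,X):=G_{L,\Omega_j}(\cdot,X)$ be the solution of $-\div(A\nabla G_j(\cdot,X))=\delta_X$ (Dirac mass at $X$) with zero boundary data, constructed in the sense of Stampacchia/Gr\"uter-Widman, so that $G_j(\cdot,X)\in W^{1,2}_{\rm loc}(\Omega_j\setminus\{X\})$, it is $L$-harmonic away from the pole, and it vanishes continuously on $\partial\Omega_j$. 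The weak maximum principle gives $G_j\ge 0$, and monotonicity of Green functions under domain inclusion gives $G_j\le G_{j+1}$. The symmetry $G_j(X,Y)=G_j(Y,X)$ is exactly where the hypothesis $A=A^\top$ enters: testing the equation for $G_j(\cdot,X)$ against $G_j(\cdot,Y)$ and conversely (using the standard truncations near the two poles) and invoking $A=A^\top$ yields the identity.

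For the pointwise size bounds I would run the classical Littman-Stampacchia-Weinberger / Gr\"uter-Widman scheme, which uses only ellipticity: Caccioppoli on dyadic annuli centred at the pole, the Sobolev inequality, and Moser's local boundedness iterated over those annuli give $G_j(X,Y)\le C|X-Y|^{1-n}$ with $C=C(n,\Lambda)$; the matching lower bound near the pole follows by taking the ball $B:=B(X,\delta(X))\subset\Omega$, noting $G_j(X,\cdot)\ge G_{L,B}(X,\cdot)$ by domain monotonicity once $B\subset\Omega_j$, and invoking the Gr\"uter-Widman lower bound $G_{L,B}(X,Y)\ge c_\theta|X-Y|^{1-n}$ for $|X-Y|\le\theta\,r(B)=\theta\,\delta(X)$ (equivalently, by the interior Harnack inequality applied along the segment from $X$ to $Y$). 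Since all these bounds are uniform in $j$, the limit $G_L:=\lim_j G_j$ exists, is nonnegative, symmetric, and inherits both estimates; moreover $G_L(X,\cdot)$ is a nonnegative solution of $Lu=0$ in $\Omega\setminus\{X\}$, hence locally H\"older continuous there by De Giorgi-Nash-Moser.

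Next I would establish the continuity of $G_L(X,\cdot)$ up to $\partial\Omega$ and its vanishing there. Fix $X\in\Omega$ and $\varepsilon>0$: the function $v:=G_L(X,\cdot)$ is bounded by $C\varepsilon^{1-n}$ and $L$-harmonic in $\Omega\setminus\overline{B(X,\varepsilon)}$, and it is the increasing limit of the $G_j(X,\cdot)$, each of which vanishes continuously on $\partial\Omega_j$. Since $\partial\Omega$ is $\mathrm{AR}$ it satisfies the CDC and is therefore Wiener regular at every point; this provides, for the $G_j$, a boundary H\"older estimate at points of $\partial\Omega$ which is quantitative and stable under the exhaustion, and letting $j\to\infty$ gives $v\in C(\overline\Omega\setminus\{X\})$ with $v\equiv 0$ on $\partial\Omega$. (Alternatively one first checks that $v$ has zero Sobolev trace on $\partial\Omega$, by testing with the truncations $\min(v,t)$ and letting $t\downarrow 0$, and then appeals to the boundary regularity at CDC points.)

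Finally, for the representation formula fix $\varphi\in C_c^\infty(\re^{n+1})$ and let $u_\varphi$ be the $L$-harmonic function in $\Omega$ with boundary data $\varphi|_{\partial\Omega}$, so that $u_\varphi(X)=\int_{\partial\Omega}\varphi\,d\omega_L^X$ by the definition of elliptic measure. Then $w:=\varphi-u_\varphi$ lies in $W^{1,2}_{\rm loc}(\Omega)$, has zero trace on $\partial\Omega$ (Wiener regularity again), and solves $Lw=L\varphi=-\div(A\nabla\varphi)$ in $\Omega$; hence, by the defining property of $G_L$ together with an integration-by-parts/truncation argument, for a.e.\ $X\in\Omega$
$$
w(X)=\iint_\Omega G_L(Y,X)\big(-\div(A\nabla\varphi)\big)(Y)\,dY=\iint_\Omega\nabla_YG_L(Y,X)\cdot A(Y)\nabla\varphi(Y)\,dY=\iint_\Omega A(Y)\nabla_YG_L(Y,X)\cdot\nabla\varphi(Y)\,dY,
$$
the last equality using $A=A^\top$; recalling $w=\varphi-u_\varphi$ yields the stated identity. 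The main obstacle is the boundary analysis of the last two paragraphs: arranging the exhaustion so that the boundary H\"older/barrier estimates for the $G_j$ are genuinely uniform in $j$ and survive the limit, and justifying both the zero trace of $w$ and the integration by parts at the level of generality of a rough $\mathrm{AR}$ boundary; the remaining ingredients are classical interior estimates together with the symmetry of $A$.
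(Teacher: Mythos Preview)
The paper does not prove this lemma; it cites \cite{hofmartellgeneral} (and records that $\mathrm{AR}\Rightarrow$ CDC $\Rightarrow$ Wiener regularity, via \cite{hoflemartellnystrom}). Your overall architecture---exhaustion, Gr\"uter--Widman bounds on the approximants, monotonicity, symmetry from $A=A^\top$, passage to the limit, and Wiener regularity for the boundary behaviour---is exactly the strategy of the cited reference, and the interior estimates you sketch are the standard ones.

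Where your proposal differs from \cite{hofmartellgeneral}, and where the gap you correctly flag as the ``main obstacle'' lives, is in the \emph{choice of exhausting domains}. You take smooth $\Omega_j\uparrow\Omega$; then typically $\partial\Omega_j\cap\partial\Omega=\emptyset$, so the boundary H\"older decay of $G_j(X,\cdot)$ is at $\partial\Omega_j$, not at $\partial\Omega$, and there is no uniform mechanism to transfer it. Wiener regularity of $\partial\Omega$ alone does not force a bounded nonnegative $L$-solution to vanish there; you need some trace information, and with smooth $\Omega_j$ that information is on the wrong set. The construction in \cite{hofmartellgeneral} (visible in this paper in the proof of Lemma~\ref{proprepresent}) uses instead a family $\{\mathcal T_k\}$ of bounded open sets with $\partial\mathcal T_k$ Ahlfors regular and, crucially, $B(x_0,2^k)\cap\partial\Omega\subset\partial\mathcal T_k$. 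Thus the approximating Green functions already vanish on growing pieces of $\partial\Omega$, and the CDC boundary H\"older estimate at those points is uniform in $k$ and survives the limit. If you replace your smooth $\Omega_j$ by such $\mathcal T_k$, the rest of your outline goes through.

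For the representation formula: your argument via $w=\varphi-u_\varphi$ is correct in bounded domains (where $w\in W^{1,2}_0(\Omega)$ and the Green representation is immediate), but when $\Omega$ is unbounded you only have $w\in W^{1,2}_{\rm loc}$, and the integration by parts needs justification; the reference again handles this by first proving the identity on each $\mathcal T_k$ and passing to the limit using the local $W^{1,2}$ convergence of $G_{L,\mathcal T_k}(\cdot,X)$ to $G_L(\cdot,X)$ away from the pole.
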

\medskip

\begin{remark}
 If we also assume that $\Omega$ is bounded, following \cite{hofmartellgeneral} we know that the Green function $G_L$ coincides with the one constructed in \cite{gruterwidman}. Consequently, for each $X\in\Omega$ and $0<r<\delta(X)$, there holds
\begin{equation}\label{obsgreen1}
G_L(X,\cdot)\in W^{1,2}(\Omega\setminus B(X,r))\cap W_0^{1,1}(\Omega).
\end{equation}
Moreover, for every $\varphi\in C_c^{\infty}(\Omega)$ such that $\varphi\equiv 1$ in $B(X,r)$ with $0<r<\delta(X)$, we have that
\begin{equation}\label{obsgreen2}
(1-\varphi)G_L(X,\cdot)\in W_0^{1,2}(\Omega).
\end{equation}
\end{remark}
\medskip

\begin{lemma}[\cite{hofmartellgeneral}]\label{proppde}
Suppose that $\Omega\subset\re^{n+1}$ is a 1-sided $\mathrm{CAD}$. Let $L$ and $L_1$ be elliptic operators, there exist $C_1\ge 1$ (depending only on dimension, the 1-sided $\mathrm{CAD}$ constants and the ellipticity of $L$) and $C_2\ge 1$ (depending on the above parameters and on the ellipticity of $L_1$), such that for every $B_0=B(x_0,r_0)$ with $x_0\in\partial\Omega$, $0<r_0<\diam(\partial\Omega)$, and $\Delta_0=B_0\cap\partial\Omega$ we have the following properties:
\begin{list}{$(\theenumi)$}{\usecounter{enumi}\leftmargin=1cm \labelwidth=1cm \itemsep=0.1cm \topsep=.2cm \renewcommand{\theenumi}{\alph{enumi}}}

\item If $B=B(x,r)$ with $x\in\partial\Omega$ and $\Delta=B\cap\partial\Omega$ is such that $2B\subset B_0$, then for all $X\in\Omega\setminus B_0$ we have that
    $$
\frac{1}{C_1}\omega_L^X(\Delta)\leq r^{n-1} G_L(X_\Delta,X)\leq C_1\omega_L^X(\Delta).
    $$

\item  If $X\in\Omega\setminus 4B_0$, then
    $$
    \omega_{L}^X(2\Delta_0)\leq C_1\omega_{L}^X(\Delta_0).
    $$

\item  If $B=B(x,r)$ with $x\in\partial\Omega$ and $\Delta:=B\cap\partial\Omega$ is such that $B\subset B_0$, then for every $X\in\Omega\setminus 2\kappa_0B_0$ with $\kappa_0$ as in \eqref{definicionkappa0}, we have that
$$
\frac{1}{C_1}\omega_L^{X_{\Delta_0}}(\Delta)\leq \frac{\omega_L^X(\Delta)}{\omega_L^X(\Delta_0)}\leq C_1\omega_L^{X_{\Delta_0}}(\Delta).
$$
Moreover, if we also suppose that $\omega_L\ll\sigma$, then
$$
\frac{1}{C_1}k_L^{X_{\Delta_0}}(y)\leq \frac{k_L^X(y)}{\omega_L^X(\Delta_0)}\leq C_1 k_L^{X_{\Delta_0}}(y),\quad\text{ for }\sigma\text{-a.e. }y\in\Delta_0.
$$

\item  If $B=B(x,r)$ with $x\in\Delta_0$, $0<r<r_0/4$ and $\Delta=B\cap\partial\Omega$, then we have that
$$
\frac{1}{C_1}\omega_{L,\Omega}^{X_{\Delta}}(F)\leq\omega_{L,T_{\Delta_0}}^{X_{\Delta}}(F)\leq C_1\omega_{L,\Omega}^{X_{\Delta}}(F),\quad\text{ for every Borel set }F\subset\Delta.
$$
This implies that $\omega_{L,\Omega}\ll\sigma$ in $\Delta$ if and only if $\omega_{L,T_{\Delta_0}}\ll\sigma$ in $\Delta$ and, in such a case,
$$
\frac{1}{C_1}k_{L,\Omega}^{X_\Delta}(y)\leq k_{L,T_{\Delta_0}}^{X_\Delta}(y)\leq C_1k_{L,\Omega}^{X_{\Delta}}(y),\quad\text{ for }\sigma\text{-a.e. }y\in\Delta.
$$

\item  If $L\equiv L_1$ in $B(x_0,2\kappa_0r_0)\cap\Omega$ with $\kappa_0$ as in \eqref{definicionkappa0}, then
$$
\frac{1}{C_2}\omega_{L_1}^{X_{\Delta_0}}(F)\leq\omega_{L}^{X_{\Delta_0}}(F)\leq C_2\omega_{L_1}^{X_{\Delta_0}}(F),\quad\text{ for every Borel set }F\subset\Delta_0.
$$
This implies that $\omega_{L}\ll\sigma$ in $\Delta_0$ if and only if $\omega_{L_1}\ll\sigma$ in $\Delta_0$ and, in such a case,
$$
\frac{1}{C_2}k_{L_1}^{X_{\Delta_0}}(y)\leq k_{L}^{X_{\Delta_0}}(y)\leq C_2k_{L_1}^{X_{\Delta_0}}(y),\quad\text{ for }\sigma\text{-a.e. }y\in\Delta_0.
$$
\end{list}
\end{lemma}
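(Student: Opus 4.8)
The plan is to assemble the standard Caffarelli--Fabes--Mortola--Salsa / Jerison--Kenig toolbox in the present rough setting and then read off (a)--(e) from it in order (this is how the result is obtained in \cite{hofmartellgeneral}). First I would record the background PDE facts that the $\mathrm{AR}$ and $1$-sided $\mathrm{CAD}$ hypotheses make available. Interior De~Giorgi--Nash--Moser estimates (the Harnack inequality and interior H\"older continuity of solutions of $Lu=0$) hold for any operator as in Definition~\ref{ellipticoperator}. Since $\partial\Omega$ is $\mathrm{AR}$ it satisfies the Capacity Density Condition, so every boundary point is Wiener regular and, quantitatively, any nonnegative solution of $Lu=0$ in $B(x,2r)\cap\Omega$ vanishing continuously on $\Delta(x,2r)$ decays in a scale-invariant H\"older fashion as one approaches $\Delta(x,r)$ (barrier argument). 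Combining this boundary decay with the Harnack Chain condition yields Carleson's estimate: such a solution $u$ satisfies $u(Y)\lesssim u(X_\Delta)$ for all $Y\in B(x,r)\cap\Omega$, where $X_\Delta$ is a corkscrew point for $\Delta(x,r)$. The Corkscrew and Harnack Chain conditions also let one join any two corkscrew points at comparable scales and locations by a Harnack chain of bounded length, so a fixed positive solution takes comparable values at them. Finally I would use the geometric input --- supplied by the Whitney--sawtooth construction recalled in Section~\ref{section:prelim} (see \cite{hofmartell}) --- that the Carleson boxes $T_\Delta$, $T_Q$ and the sawtooth regions $\Omega_{\mathcal F,Q}$ are themselves $1$-sided $\mathrm{CAD}$ with constants depending only on the allowable parameters and that they share large boundary portions with $\Omega$; hence all of the estimates being proved, and Bourgain's estimate (Lemma~\ref{bourgain}), are available inside these subdomains with uniform constants.

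For part (a) (the CFMS estimate) I would fix $B=B(x,r)$ with $2B\subset B_0$ and $X\in\Omega\setminus B_0$, so $X\notin 2B$, take a corkscrew point $X_\Delta$ for $\Delta=\Delta(x,r)$ inside $B(x,c_1r/4)\cap\Omega$ with $\delta(X_\Delta)\approx r$, and compare, as functions of $Z$, $w(Z)=\omega_L^Z(\Delta)$ and $v(Z)=r^{n-1}G_L(Z,X_\Delta)$. In $\Omega\setminus\overline{B(x,c_1r/2)}$ the function $v$ is $L$-harmonic (its pole is excluded) and vanishes on $\partial\Omega$, while on $\partial B(x,c_1r/2)\cap\Omega$ one has $v\lesssim 1$ by the Green size bound \eqref{sizestimate} (there $|Z-X_\Delta|\gtrsim r$) and $w\gtrsim 1$ by Bourgain's estimate; the maximum principle gives $v\lesssim w$, and evaluating at $Z=X$ yields $r^{n-1}G_L(X,X_\Delta)\lesssim\omega_L^X(\Delta)$. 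For the reverse inequality I would compare the same two functions on $B(x,2r)\cap\Omega$ minus a small ball around $X_\Delta$, using the lower Green bound near the pole to get $v\gtrsim 1$ on a corkscrew ball at $X_\Delta$, Carleson's estimate plus boundary H\"older continuity to control $w$ (which vanishes on $\partial\Omega\setminus\overline\Delta$) on $\partial B(x,2r)\cap\Omega$, and again the maximum principle; evaluating at $Z=X$ gives $\omega_L^X(\Delta)\lesssim r^{n-1}G_L(X,X_\Delta)$. Symmetry of $L$ turns $G_L(X,X_\Delta)$ into $G_L(X_\Delta,X)$.

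Parts (b) and (c) are then routine consequences. For (b), with $X\in\Omega\setminus 4B_0$, apply (a) to the balls $\tfrac12 B_0$ and $B_0$, and note that $X_{2\Delta_0}$ and $X_{\Delta_0}$ are corkscrew points at comparable scales joined by a bounded Harnack chain in $\Omega$, so the corresponding Green values are comparable; hence $\omega_L^X(2\Delta_0)\approx\omega_L^X(\Delta_0)$. For (c), with $X\notin 2\kappa_0 B_0$, apply (a) twice (pole $X$ and pole $X_{\Delta_0}$) to write both $\omega_L^X(\Delta)/\omega_L^X(\Delta_0)$ and $\omega_L^{X_{\Delta_0}}(\Delta)$ as ratios of Green functions at corkscrew points and match them through Harnack chains; the pointwise kernel statement follows by Lebesgue differentiation on shrinking surface balls around $\sigma$-a.e.\ $y\in\Delta_0$, using that $\sigma$ is doubling.

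The substantive part, and the main obstacle, is (d) (with (e) riding on it). Here one uses that $T_{\Delta_0}\subset\Omega$ is a $1$-sided $\mathrm{CAD}$ with $\Delta\subset\partial T_{\Delta_0}\cap\partial\Omega$ and $X_\Delta\in T_{\Delta_0}$ when $x\in\Delta_0$, $r<r_0/4$ (by \eqref{definicionkappa0}). The inequality $\omega_{L,T_{\Delta_0}}^{X_\Delta}(F)\leq\omega_{L,\Omega}^{X_\Delta}(F)$ is immediate from the maximum principle; the reverse is the heart of the matter. For $F=\Delta$ it follows from Bourgain's estimate in both $\Omega$ and $T_{\Delta_0}$, which give $\omega_{L,\Omega}^{X_\Delta}(\Delta)\approx 1\approx\omega_{L,T_{\Delta_0}}^{X_\Delta}(\Delta)$. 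For general $F\subset\Delta$ one reduces via (a), valid in both domains, to comparing $G_{L,\Omega}(X_\Delta,X_{\Delta'})$ and $G_{L,T_{\Delta_0}}(X_\Delta,X_{\Delta'})$ over sub-balls $\Delta'\subset\Delta$: one inequality is monotonicity of the Green function under domain inclusion, the other follows from the maximum principle applied to the nonnegative difference $G_{L,\Omega}(X_\Delta,\cdot)-G_{L,T_{\Delta_0}}(X_\Delta,\cdot)$, which is $L$-harmonic in $T_{\Delta_0}$ with boundary data living on the interior boundary $\partial T_{\Delta_0}\setminus\partial\Omega$, at a definite scale-invariant distance from the poles, where it is controlled by the Green size bound and Bourgain's estimate. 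The absolute-continuity and kernel statements of (d) are immediate corollaries. Then (e) follows: by \eqref{definicionkappa0} one has $\overline{T_{\Delta_0}}\cap\Omega\subset B(x_0,2\kappa_0 r_0)\cap\Omega$, so $L\equiv L_1$ there forces $\omega_{L,T_{\Delta_0}}=\omega_{L_1,T_{\Delta_0}}$ identically; part (d) applied to $L$ and to $L_1$ then transfers this, up to constants, to a comparison $\omega_{L,\Omega}^{X_\Delta}\approx\omega_{L_1,\Omega}^{X_\Delta}$ on small surface balls $\Delta\subset\Delta_0$, and the change-of-pole and doubling estimates of (b)--(c), together with a covering argument, upgrade this to the comparison of the measures on $\Delta_0$ with pole $X_{\Delta_0}$. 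The hardest points to get right are the geometric fact that $T_{\Delta_0}$ and the sawtooths are uniformly $1$-sided $\mathrm{CAD}$ sharing the correct boundary portion with $\Omega$ (this is the technical core of \cite{hofmartell}), and, in (d), running the maximum-principle comparison on the interior boundary of the sawtooth without losing a power of the scale, since there $\omega_{L,\Omega}^{\cdot}(F)$ is merely bounded rather than small.
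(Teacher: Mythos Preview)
The paper does not prove this lemma; it is quoted verbatim from \cite{hofmartellgeneral} and used as a black box, so there is no in-paper argument to compare against. Your outline follows the standard Caffarelli--Fabes--Mortola--Salsa / Jerison--Kenig scheme that \cite{hofmartellgeneral} implements in the $1$-sided CAD setting, and the structure you give for (a), (b), (d), (e) is correct.

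There is, however, a real omission in your toolbox: you record interior Harnack, boundary H\"older decay, and Carleson's one-sided estimate, but never the two-sided boundary Harnack (comparison) principle --- if $u,v>0$ are $L$-harmonic in $2B_0\cap\Omega$ and vanish on $2\Delta_0$, then $u(Y)/v(Y)\approx u(X_{\Delta_0})/v(X_{\Delta_0})$ throughout $B_0\cap\Omega$. This is precisely the ingredient that drives (c), and your stated argument for (c) (``apply (a) twice \dots and match them through Harnack chains'') does not work as written. From (a) one obtains
\[
\frac{\omega_L^X(\Delta)}{\omega_L^X(\Delta_0)}\approx\Big(\frac{r}{r_0}\Big)^{n-1}\frac{G_L(X_\Delta,X)}{G_L(X_{\Delta_0},X)},
\qquad
\omega_L^{X_{\Delta_0}}(\Delta)\approx r^{n-1}G_L(X_\Delta,X_{\Delta_0}),
\]
so one must show $G_L(X_\Delta,X)/G_L(X_{\Delta_0},X)\approx r_0^{\,n-1}G_L(X_\Delta,X_{\Delta_0})$. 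Interior Harnack chains cannot do this: $\delta(X_\Delta)\approx r$ may be arbitrarily small compared with $\delta(X_{\Delta_0})\approx r_0$, so any chain joining them has length $\approx\log(r_0/r)$ and the constant blows up. The correct step is to apply the comparison principle to the pair $Y\mapsto G_L(Y,X)$ and $Y\mapsto G_L(Y,X')$, both $L$-harmonic in $2B_0\cap\Omega$ and vanishing on $2\Delta_0$ (since $X,X'\notin 2\kappa_0 B_0$), which makes the ratio $G_L(X_\Delta,X)/G_L(X_{\Delta_0},X)$ independent of $X$ up to a constant; one then identifies it by choosing an auxiliary pole. The same comparison principle is also what resolves the ``without losing a power of the scale'' issue you flag in (d). Once you add this tool explicitly, your sketch is complete.
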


\begin{remark}\label{dyadicchangepole}
As a consequence of Lemma \hyperref[proppde]{\ref*{proppde}(c)}, one can see that if  $\omega_L\ll\sigma$, there exists $C\ge 1$ (depending only on dimension, the 1-sided $\mathrm{CAD}$ constants and the ellipticity of $L$) such that for every $Q_0\in\mathbb{D}(\partial\Omega)$ and every $Q\in\mathbb{D}_{Q_0}$ we have that
$$
\frac{1}{C}k_L^{X_{Q}}(y)\leq \frac{k_L^{X_{Q_0}}(y)}{\omega_L^{X_{Q_0}}(Q)}\leq C k_L^{X_{Q}}(y),\quad\text{ for }\sigma\text{-a.e. }y\in Q.
$$
\end{remark}
\medskip

\begin{lemma}[\cite{hofmartellgeneral}]\label{sawtoothlemma}
Suppose that $\Omega\subset\re^{n+1}$ is a 1-sided $\mathrm{CAD}$. Given $Q_0\in\mathbb{D}(\partial\Omega)$ and $\mathcal{F}=\{Q_i\}\subset\mathbb{D}_{Q_0}$, a family of pairwise disjoint dyadic cubes, let $\mathcal{P}_{\mathcal{F}}$ be the corresponding projection operator defined in \eqref{defprojection}. Given an elliptic operator $L$, we denote by $\omega_L=\omega_{L,\Omega}^{A_{Q_0}}$ and $\omega_{L,\star}=\omega_{L,\Omega_{\mathcal{F},Q_0}}^{A_{Q_0}}$ the elliptic measures of $L$ with respect to $\Omega$ and $\Omega_{\mathcal{F},Q_0}$ with fixed pole at the corkscrew point $A_{Q_0}\in\Omega_{\mathcal{F},Q_0}$ (cf. \cite[Proposition 6.4]{hofmartell}). Let $\nu_L=\nu_L^{A_{Q_0}}$ be the measure defined by
\begin{equation}\label{eq:def-nu}
\nu_L(F)=\omega_{L,\star}\Big(F\setminus\bigcup_{Q_i\in\mathcal{F}}Q_i\Big)+\sum_{Q_i\in\mathcal{F}}\frac{\omega_L(F\cap Q_i)}{\omega_L(Q_i)}\omega_{L,\star}(P_i),\qquad F\subset Q_0,
\end{equation}
where $P_i$ is the cube produced by \cite[Proposition 6.7]{hofmartell}. Then $\mathcal{P}_{\mathcal{F}}\nu_L$ depends only on $\omega_{L,\star}$ and not on $\omega_L$. More precisely,
\begin{equation}\label{eq:def-nu:P}
\mathcal{P}_{\mathcal{F}}\nu_L(F)=\omega_{L,\star}\Big(F\setminus\bigcup_{Q_i\in\mathcal{F}}Q_i\Big)+\sum_{Q_i\in\mathcal{F}}\frac{\sigma(F\cap Q_i)}{\sigma(Q_i)}\omega_{L,\star}(P_i),\qquad F\subset Q_0.
\end{equation}
Moreover, there exists $\theta>0$ such that for all $Q\in\mathbb{D}_{Q_0}$ and all $F\subset Q$, we have
\begin{equation}\label{ainfsawtooth}
\bigg(\frac{\mathcal{P}_{\mathcal{F}}\omega_L(F)}{\mathcal{P}_{\mathcal{F}}\omega_L(Q)}\bigg)^\theta\lesssim\frac{\mathcal{P}_{\mathcal{F}}\nu_L(F)}{\mathcal{P}_{\mathcal{F}}\nu_L(Q)}\lesssim
\frac{\mathcal{P}_{\mathcal{F}}\omega_L(F)}{\mathcal{P}_{\mathcal{F}}\omega_L(Q)}.
\end{equation}
\end{lemma}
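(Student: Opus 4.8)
The plan is to prove the two assertions of the lemma separately: the exact formula \eqref{eq:def-nu:P}, which only requires unwinding the definitions, and then the two-sided comparison \eqref{ainfsawtooth}, which carries the real content.

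For \eqref{eq:def-nu:P} I would simply apply the projection \eqref{defprojection} to the measure $\nu_L$ of \eqref{eq:def-nu}, obtaining $\mathcal{P}_{\mathcal{F}}\nu_L(F)=\nu_L\big(F\setminus\bigcup_iQ_i\big)+\sum_{Q_i\in\mathcal{F}}\frac{\sigma(F\cap Q_i)}{\sigma(Q_i)}\,\nu_L(Q_i)$, and then evaluate the two ingredients. Since the $Q_i$ are pairwise disjoint, $F\setminus\bigcup_iQ_i$ meets no $Q_i$, so the sum in \eqref{eq:def-nu} drops out and $\nu_L\big(F\setminus\bigcup_iQ_i\big)=\omega_{L,\star}\big(F\setminus\bigcup_iQ_i\big)$; likewise, for fixed $i$ one has $Q_i\subset\bigcup_jQ_j$ and $Q_i\cap Q_j=\emptyset$ for $j\ne i$, so the ``free'' term in \eqref{eq:def-nu} vanishes and only the $j=i$ summand survives, giving $\nu_L(Q_i)=\frac{\omega_L(Q_i)}{\omega_L(Q_i)}\,\omega_{L,\star}(P_i)=\omega_{L,\star}(P_i)$. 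Substituting yields \eqref{eq:def-nu:P}, whose right-hand side involves only $\omega_{L,\star}$ together with the purely geometric objects $\sigma$, $Q_i$, $P_i$, so $\mathcal{P}_{\mathcal{F}}\nu_L$ is independent of $\omega_L$.

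For \eqref{ainfsawtooth} the plan is to establish the stronger statement that $\mathcal{P}_{\mathcal{F}}\omega_L$ and $\mathcal{P}_{\mathcal{F}}\nu_L$ are comparable \emph{as measures}, with constants depending only on the allowable parameters and the ellipticity of $L$; this yields both inequalities in \eqref{ainfsawtooth} at once (indeed one may take $\theta=1$). By \eqref{defprojection} and \eqref{eq:def-nu:P} the two measures agree with $\omega_L$, resp.\ $\omega_{L,\star}$, on $Q_0\setminus\bigcup_iQ_i$, and are the constant multiples $\frac{\omega_L(Q_i)}{\sigma(Q_i)}\sigma$, resp.\ $\frac{\omega_{L,\star}(P_i)}{\sigma(Q_i)}\sigma$, of surface measure on each $Q_i$; hence it suffices to show, with all elliptic measures carrying the fixed pole $A_{Q_0}$, that (A) $\omega_L\approx\omega_{L,\star}$ on $Q_0\setminus\bigcup_iQ_i$, and (B) $\omega_{L,\star}(P_i)\approx\omega_L(Q_i)$ for every $Q_i\in\mathcal{F}$. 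The background fact underlying both is that $\Omega_{\mathcal{F},Q_0}$ is itself a 1-sided $\mathrm{CAD}$ with uniformly controlled constants (cf.\ \cite{hofmartell}), so the full toolbox of Lemma \ref{proppde} (Bourgain's estimate, CFMS, change of pole, doubling, comparison with Carleson subboxes) applies inside $\Omega_{\mathcal{F},Q_0}$ as well as inside $\Omega$. For (A) I would use, on the common boundary $\partial\Omega\cap\partial\Omega_{\mathcal{F},Q_0}\supset Q_0\setminus\bigcup_iQ_i$, the sawtooth analogue of Lemma \hyperref[proppde]{\ref*{proppde}(d)} (also in \cite{hofmartellgeneral}) to compare $\omega_{L,\Omega}$ and $\omega_{L,\Omega_{\mathcal{F},Q_0}}$ with a common local pole $X_Q$, and then transfer that pole to $A_{Q_0}$ by Lemma \hyperref[proppde]{\ref*{proppde}(c)} in each domain (admissible since $A_{Q_0}$ is a corkscrew at the top scale $\ell(Q_0)$, hence far from the small cube $Q$), concluding that $d\omega_{L,\star}^{A_{Q_0}}/d\omega_L^{A_{Q_0}}$ is bounded above and below on $Q_0\setminus\bigcup_iQ_i$. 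For (B) I would invoke the properties of the ``ceiling'' cube $P_i\subset\partial\Omega_{\mathcal{F},Q_0}$ from \cite[Proposition 6.7]{hofmartell}, namely $\ell(P_i)\approx\ell(Q_i)$ and that $P_i$ admits a corkscrew point (in $\Omega_{\mathcal{F},Q_0}$) which is Harnack-connected within $\Omega$ to the corkscrew $X_{Q_i}$ of $Q_i$ at the scale $\ell(Q_i)$; applying CFMS (Lemma \hyperref[proppde]{\ref*{proppde}(a)}) once in $\Omega$ and once in $\Omega_{\mathcal{F},Q_0}$, together with doubling to pass between $Q_i$ and $\Delta_{Q_i}$ and between $P_i$ and $\Delta_{P_i}$, reduces (B) to the Green-function comparison $G_{L,\Omega}(X_{Q_i},A_{Q_0})\approx G_{L,\Omega_{\mathcal{F},Q_0}}(X_{Q_i},A_{Q_0})$: here ``$\gtrsim$'' is the maximum principle applied to $G_{L,\Omega}(\cdot,A_{Q_0})-G_{L,\Omega_{\mathcal{F},Q_0}}(\cdot,A_{Q_0})$, which is $L$-harmonic across $A_{Q_0}$ (the two singularities cancel) and nonnegative on $\partial\Omega_{\mathcal{F},Q_0}$, while ``$\lesssim$'' is the standard stability of Green functions under passage to sawtooth subdomains, both points lying at controlled distances from $\partial\Omega_{\mathcal{F},Q_0}$.

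The hard part is (B), and within it the two-sided Green-function comparison between $\Omega$ and its sawtooth subdomain with both poles held fixed, together with the bookkeeping that the corkscrew of $Q_i$ also controls a corkscrew of $P_i$ with $\ell(P_i)\approx\ell(Q_i)$; part (A) and the entire identity \eqref{eq:def-nu:P} are routine once the sawtooth machinery of \cite{hofmartell, hofmartellgeneral} is in hand.
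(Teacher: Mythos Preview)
The paper does not prove this lemma; it is quoted from \cite{hofmartellgeneral} (the harmonic prototype is \cite[Lemma~6.15]{hofmartell}). Your derivation of \eqref{eq:def-nu:P} is correct and matches the standard unwinding of the definitions.

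Your plan for \eqref{ainfsawtooth}, however, has a genuine gap. You propose to show that $\mathcal{P}_{\mathcal{F}}\omega_L$ and $\mathcal{P}_{\mathcal{F}}\nu_L$ are comparable \emph{as measures}, which would give $\theta=1$. This is stronger than what the lemma asserts, and in fact it is false in general: the exponent $\theta$ is essential. The failure is in your claim (A). The maximum principle does give $\omega_{L,\star}^{A_{Q_0}}(E)\le\omega_L^{A_{Q_0}}(E)$ for $E\subset Q_0\setminus\bigcup_iQ_i$, but the reverse inequality $\omega_L^{A_{Q_0}}(E)\lesssim\omega_{L,\star}^{A_{Q_0}}(E)$ does not hold with a uniform constant. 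Points of $Q_0\setminus\bigcup_iQ_i$ can lie arbitrarily close to some $\partial Q_i$; there the sawtooth boundary has lifted off $\partial\Omega$ and presents a ``wall'' over $Q_i$, so the sawtooth elliptic measure sees much less of a small surface ball near $\partial Q_i$ than the full-domain measure does, and the Radon--Nikodym density $d\omega_{L,\star}^{A_{Q_0}}/d\omega_L^{A_{Q_0}}$ is not bounded below on $Q_0\setminus\bigcup_iQ_i$. Your proposed justification via ``the sawtooth analogue of Lemma~\hyperref[proppde]{\ref*{proppde}(d)}'' is circular: that comparison is valid only on surface balls $\Delta(x,r)$ with $r$ small relative to $\dist(x,\bigcup_iQ_i)$ and with the \emph{local} pole $X_\Delta$; transferring the pole to $A_{Q_0}$ via Lemma~\hyperref[proppde]{\ref*{proppde}(c)} reintroduces the factor $\omega_{L,\star}^{A_{Q_0}}(\Delta)/\omega_L^{A_{Q_0}}(\Delta)$, which is precisely the quantity you are trying to control. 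There is also a smaller issue in (B): the corkscrew $X_{Q_i}$ lies in $U_{Q_i}$, which is \emph{not} part of $\Omega_{\mathcal{F},Q_0}$ (since $Q_i\in\mathcal{F}$), so $G_{L,\Omega_{\mathcal{F},Q_0}}(X_{Q_i},A_{Q_0})$ is not defined; one must work with the sawtooth corkscrew of $P_i$ throughout.

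The actual proof in \cite{hofmartell,hofmartellgeneral} does not attempt direct comparability. One shows $\nu_L(F)\lesssim\omega_L(F)$ for all $F\subset Q_0$ (this uses essentially your (B) in the easy direction together with the maximum-principle half of (A)), which, combined with a lower bound $\mathcal{P}_{\mathcal{F}}\nu_L(Q)\gtrsim\mathcal{P}_{\mathcal{F}}\omega_L(Q)$ for \emph{dyadic} $Q$, gives the right-hand inequality in \eqref{ainfsawtooth}. The left-hand inequality, with its exponent $\theta$, is obtained by a separate argument exploiting H\"older continuity of solutions at the boundary and a stopping-time decomposition; this is where the loss from $\theta=1$ to some $\theta>0$ occurs, and it cannot be avoided.
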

\medskip

\begin{definition}\label{defrhp}
Suppose that $\Omega\subset\re^{n+1}$ is a 1-sided $\mathrm{CAD}$, let $L$ be an elliptic operator and let $1<p<\infty$. We say that $\omega_L\in RH_p(\partial\Omega)$ if $\omega_L\ll\sigma$ and
$k_L^{X_{\Delta_0}}\in RH_p(\Delta_0)$ uniformly in $\Delta_0$ for every surface ball $\Delta_0\subset\partial\Omega$. That is,
there exists $C \ge 1$ such that for every $B_0:=B(x_0,r_0)$ with $x_0\in\partial\Omega$ and $0<r_0<\diam(\partial\Omega)$, and for every $B=B(x,r)\subset B_0$ with $x\in\partial\Omega$, we have that
$$
\bigg(\barint_{\Delta}k_L^{X_{\Delta_0}}(y)^p\,d\sigma(y)\bigg)^{1/p}\leq
C\barint_{\Delta}k_L^{X_{\Delta_0}}(y)\,d\sigma(y),\qquad \Delta=B\cap\partial\Omega.
$$
Analogously, we say that $\omega_L\in RH_p^{\rm dyadic}(\partial\Omega)$ if
$\omega_L\ll\sigma$ and $k_L^{X_{Q_0}}\in RH_p(Q_0)$ uniformly in $Q_0$ for every $Q_0\in\mathbb{D}(\partial\Omega)$. That is, there exists
$C\geq 1$ such that for every $Q_0\in\mathbb{D}(\partial\Omega)$ and every $Q\in\mathbb{D}_{Q_0}$, we have that
$$
\bigg(\barint_{Q}k_L^{X_{Q_0}}(y)^p\,d\sigma(y)\bigg)^{1/p}\leq
C\barint_{Q}k_L^{X_{Q_0}}(y)\,d\sigma(y).
$$
\end{definition}
\medskip

Before going further, let us introduce the following operators (see \cite{hofmartelltuero}):
$$
Su(x):=\bigg(\iint_{\Gamma(x)}|\nabla u(Y)|^2\delta(Y)^{1-n}\,dY\bigg)^{1/2},\qquad\widetilde{\mathcal{N}}_*u(x):=\sup_{Y\in\widetilde{\Gamma}(x)}|u(Y)|,
$$
where
$$
\Gamma(x):=\bigcup_{x\in Q\in\mathbb{D}(\partial\Omega)}U_Q,\qquad\widetilde{\Gamma}(x):=\bigcup_{x\in Q\in\mathbb{D}(\partial\Omega)}U_Q^*.
$$
Similarly, we can define localized versions of the above operators. For a fixed $Q_0\in\mathbb{D}(\partial\Omega)$, we define
$$
S_{Q_0}u(x):=\bigg(\iint_{\Gamma_{Q_0}(x)}|\nabla u(Y)|^2\delta(Y)^{1-n}\,dY\bigg)^{1/2},\qquad\widetilde{\mathcal{N}}_{Q_0,*}u(x):=\sup_{Y\in\widetilde{\Gamma}_{Q_0}(x)}|u(Y)|,
$$
for each $x\in Q_0$, where
$$
\Gamma_{Q_0}(x):=\bigcup_{x\in Q\in\mathbb{D}_{Q_0}}U_{Q},\qquad\widetilde{\Gamma}_{Q_0}(x):=\bigcup_{x\in Q\in\mathbb{D}_{Q_0}}U_{Q}^*.
$$

\begin{theorem}[\cite{hofmartellgeneral}]\label{solvability}
Suppose that $\Omega\subset\re^{n+1}$ is a 1-sided $\mathrm{CAD}$, let $L$ be an elliptic operator and let $1<p<\infty$, the following statements are equivalent:
\begin{list}{$(\theenumi)$}{\usecounter{enumi}\leftmargin=1cm \labelwidth=1cm \itemsep=0.1cm \topsep=.2cm \renewcommand{\theenumi}{\alph{enumi}}}
\item There exists $C\ge 1$ such that
$$
\|\widetilde{\mathcal{N}}_*u\|_{L^{p'}(\partial\Omega)}\leq C\|f\|_{L^{p'}(\partial\Omega)},
$$
whenever
\begin{equation}\label{eq:u-hm-sol}
u(X)=\int_{\partial\Omega}f(y)\,d\omega_L^X(y),
\qquad
f\in C_c(\partial\Omega).
\end{equation}

\item $\omega_L\in RH_p(\partial\Omega)$ (cf. Definition \ref{defrhp}).

\item $\omega_L\ll\sigma$ and there exists $C\ge 1$
such that for every $B:=B(x,r)$ with $x\in\partial\Omega$ and $0<r<\diam(\partial\Omega)$, we have that
\begin{equation}\label{reverseholderL}
\int_{\Delta}k_L^{X_{\Delta}}(y)^p\,d\sigma(y)\leq C\sigma(\Delta)^{1-p},\quad \Delta=B\cap\partial\Omega.
\end{equation}
\end{list}
Moreover, $(a)$, $(b)$ and/or $(c)$ yield that for every $0<q<\infty$ there exists $C$ (depending only on dimension, the 1-sided $\mathrm{CAD}$ constants, the ellipticity of $L$, the constants in $(a)$, $(b)$ and/or $(c)$, and on $q$) such that for every $Q_0\in\mathbb{D}$
\begin{equation}\label{eq:S<N}
\|S_{Q_0}u\|_{L^{q}(Q_0)}
\lesssim
\|\widetilde{\mathcal{N}}_{Q_0,*}u\|_{L^{q}(Q_0)}
\end{equation}
for every $u$ as in \eqref{eq:u-hm-sol}.
\end{theorem}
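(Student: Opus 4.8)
The plan is to run the cyclic chain $(a)\Rightarrow(c)\Rightarrow(b)\Rightarrow(a)$, together with the trivial $(b)\Rightarrow(c)$, and to treat the ``moreover'' clause separately once $(b)$ is in hand. I would start with the easy half, $(b)\Leftrightarrow(c)$, which is pure change-of-pole bookkeeping. For $(b)\Rightarrow(c)$: in Definition \ref{defrhp} one is allowed to take the base ball $B_0$ equal to $B$ itself, so applying $RH_p$ with $\Delta_0=\Delta=B\cap\partial\Omega$ and pole $X_{\Delta_0}=X_\Delta$ gives $\big(\barint_\Delta (k_L^{X_\Delta})^p\,d\sigma\big)^{1/p}\le C\barint_\Delta k_L^{X_\Delta}\,d\sigma=C\,\omega_L^{X_\Delta}(\Delta)\,\sigma(\Delta)^{-1}\le C\sigma(\Delta)^{-1}$ by the maximum principle, which is exactly \eqref{reverseholderL}. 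For $(c)\Rightarrow(b)$: first note that \eqref{reverseholderL} together with $\omega_L^{X_\Delta}(\Delta)\approx1$ (Lemma \ref{bourgain} and the maximum principle) says precisely that $k_L^{X_\Delta}\in RH_p(\Delta)$ with the single ball $\Delta$ as base; to promote this to all sub-balls $\Delta'\subset\Delta$ one changes the pole from $X_\Delta$ to $X_{\Delta'}$ using Lemma \ref{proppde}(c) (legitimate when $r(\Delta')\lesssim r(\Delta)$, the remaining scales being absorbed by a standard covering/doubling argument) and re-applies \eqref{reverseholderL} on $\Delta'$. A final change of pole from $X_\Delta$ to the fixed $X_{\Delta_0}$, again via Lemma \ref{proppde}(c), yields Definition \ref{defrhp}.

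Next, $(a)\Rightarrow(c)$. The key geometric remark is that for each surface ball $\Delta=\Delta(x_0,r)$ there is a dyadic cube $Q\in\mathbb{D}(\partial\Omega)$ with $x_0\in Q$, $\ell(Q)\approx r$, $\sigma(Q)\approx\sigma(\Delta)$, and such that the corkscrew point $X_\Delta$ lies in $U_Q\subset\widetilde{\Gamma}(y)$ for every $y\in Q$. Hence, for $f\in C_c(\partial\Omega)$ and $u$ as in \eqref{eq:u-hm-sol}, $|u(X_\Delta)|\le\inf_{y\in Q}\widetilde{\mathcal{N}}_*u(y)\le\barint_Q\widetilde{\mathcal{N}}_*u\,d\sigma\le\sigma(Q)^{-1/p'}\|\widetilde{\mathcal{N}}_*u\|_{L^{p'}(\partial\Omega)}\lesssim\sigma(\Delta)^{-1/p'}\|f\|_{L^{p'}(\partial\Omega)}$, using $(a)$ in the last step. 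Since $u(X_\Delta)=\int f\,d\omega_L^{X_\Delta}$ and $C_c(\partial\Omega)$ is dense in $L^{p'}(\partial\Omega,\sigma)$, this says that $\omega_L^{X_\Delta}$ extends to a bounded linear functional on $L^{p'}(\partial\Omega,\sigma)$ of norm $\lesssim\sigma(\Delta)^{-1/p'}$; by the Riesz representation of $\big(L^{p'}\big)^*$ (and positivity of $\omega_L^{X_\Delta}$), $\omega_L^{X_\Delta}\ll\sigma$ with density $k_L^{X_\Delta}$ satisfying $\|k_L^{X_\Delta}\|_{L^p(\partial\Omega,\sigma)}\lesssim\sigma(\Delta)^{-1/p'}$. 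This gives both $\omega_L\ll\sigma$ and \eqref{reverseholderL}, i.e.\ $(c)$.

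To close the loop with $(b)\Rightarrow(a)$, I would first self-improve the reverse Hölder exponent: since $\sigma$ is doubling, $\omega_L\in RH_p$ implies $\omega_L\in RH_{p+\varepsilon}$ for some $\varepsilon>0$ (Gehring's lemma), which via $(b)\Leftrightarrow(c)$ at exponent $p+\varepsilon$ gives $\|k_L^{X_\Delta}\|_{L^{p+\varepsilon}(\Delta,\sigma)}\lesssim\sigma(\Delta)^{1/(p+\varepsilon)-1}$ for every $\Delta$. Then, for $u$ as in \eqref{eq:u-hm-sol} and $Y\in\widetilde{\Gamma}(x)$, writing $Y\in U_Q^*$ with $x\in Q$ and splitting $f=f^+-f^-$, Harnack's inequality and comparison of elliptic measures give $|u(Y)|\lesssim\int_{\partial\Omega}|f|\,d\omega_L^{X_Q}$; decomposing $\partial\Omega$ into dyadic annuli around $x_Q$, transferring the pole $X_Q$ to the local corkscrew points on each annulus via Lemma \ref{proppde}(a),(c), and using the decay of elliptic measure away from its pole (a consequence of the boundary De Giorgi–Nash–Moser estimate, available since $\mathrm{AR}\Rightarrow$ CDC), one controls $|u(Y)|$ pointwise by $\big(M_\sigma(|f|^r)(x)\big)^{1/r}$ with $r=(p+\varepsilon)'<p'$. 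Since $M_\sigma$ is bounded on $L^{p'/r}(\sigma)$, this yields $(a)$; the strict self-improvement $\varepsilon>0$ is exactly what supplies the room $r<p'$ needed to close the estimate. Finally, for the ``moreover'': from $(b)$ we have $\omega_L\in A_\infty(\sigma)$, whence (as in \cite{hofmartelltuero} for the Laplacian and \cite{hofmartellgeneral} in general) bounded null-solutions of $L$ satisfy the Carleson measure estimate $\sup_\Delta\sigma(\Delta)^{-1}\iint_{T_\Delta}|\nabla v|^2\delta\,dX\lesssim\|v\|_{L^\infty}^2$; combined with Caccioppoli's inequality and the dyadic/sawtooth geometry this gives a Dahlberg–Jerison–Kenig good-$\lambda$ inequality relating $S_{Q_0}u$ and $\widetilde{\mathcal{N}}_{Q_0,*}u$ for null-solutions, and a standard distribution-function argument — with a truncation to guarantee a priori finiteness of $\|S_{Q_0}u\|_{L^q(Q_0)}$ for $u$ as in \eqref{eq:u-hm-sol} — upgrades it to \eqref{eq:S<N} for every $0<q<\infty$.

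The two substantive steps, and thus the expected obstacles, are $(b)\Rightarrow(a)$ and the ``moreover''. In the former, the delicate point is the pointwise domination of $\widetilde{\mathcal{N}}_*u$ by a bounded maximal operator applied to $f$: the contributions of the far-away boundary to $u(Y)=\int f\,d\omega_L^Y$ must be summed over all scales, which forces one to quantify the decay of $\omega_L$ away from its pole and to change poles carefully across dyadic annuli — for general elliptic operators without any regularity of the coefficients this is where the $\mathrm{AR}$/CDC structure and the reverse-Hölder self-improvement genuinely enter. In the latter, the heart is the $S\lesssim\widetilde{\mathcal{N}}_*$ good-$\lambda$ bound, which is not formal and uses the $A_\infty$ hypothesis through the Carleson measure estimate. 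All of this is carried out in detail in \cite{hofmartellgeneral}.
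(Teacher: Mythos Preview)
The paper does not give a proof of this theorem: it is simply stated and attributed to \cite{hofmartellgeneral} (a work in progress at the time), so there is nothing in the present paper to compare your argument against. Your sketch is a faithful outline of the standard route --- the easy equivalence $(b)\Leftrightarrow(c)$ via change of pole, the duality/Riesz argument for $(a)\Rightarrow(c)$, Gehring self-improvement plus maximal-function domination for $(b)\Rightarrow(a)$, and the $A_\infty\Rightarrow$ CME $\Rightarrow$ good-$\lambda$ chain for the $S\lesssim\widetilde{\mathcal{N}}_*$ estimate --- and you yourself correctly note at the end that the details are in \cite{hofmartellgeneral}. One small quibble: in your $(a)\Rightarrow(c)$ step you assert $X_\Delta\in U_Q$, which is not literally guaranteed for an arbitrary surface ball $\Delta$; the clean way is to take $Q\ni x_0$ with $\ell(Q)\approx r$, use $X_Q\in U_Q\subset\widetilde{\Gamma}(y)$ for $y\in Q$, and pass from $X_\Delta$ to $X_Q$ by Harnack (after splitting $f=f^+-f^-$).
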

\medskip

\begin{remark}\label{obsrhpq0}
Note that $\omega_L\in RH_p(\partial\Omega)$, together with Lemma \hyperref[proppde]{\ref*{proppde}(b)} and Harnack's inequality, imply that $\omega_L\in RH_p^{\rm dyadic}(\partial\Omega)$. This in turn gives
\begin{equation}\label{reverseholderLdyadic}
\int_{Q}k_L^{X_{Q}}(y)^p\,d\sigma(y)\leq C\sigma(Q)^{1-p},\qquad Q\in\mathbb{D}(\partial\Omega).
\end{equation}
Moreover, from \eqref{reverseholderLdyadic} and Harnack's inequality, we can see that \eqref{reverseholderL} holds, and hence $\omega_L\in RH_p(\partial\Omega)$. Therefore, the conditions $\omega_L\in RH_p(\partial\Omega)$, $\omega_L\in RH_p^{\rm dyadic}(\partial\Omega)$, \eqref{reverseholderL} and \eqref{reverseholderLdyadic} are all equivalent.
\end{remark}

\section{Auxiliary results}\label{section:aux}

The following result is a generalization of \cite[Lemma B.7]{MR2833577} to our setting dyadic setting.

\begin{lemma}\label{lemmab7}
Suppose that $\Omega\subset\re^{n+1}$ is an open set such that $\partial\Omega$ satisfies the $\mathrm{AR}$ property. Fix $0<\eta<1$, $Q_0\in\mathbb{D}(\partial\Omega)$ and let $v\in L^1(Q_0)$ be such that $0<v(Q)\leq C_0 v(\eta\Delta_Q)$ for every
$Q\in\mathbb{D}_{Q_0}$, for some uniform $C_0\ge 1$. Suppose also that there exist $C_1\geq 1$ and $1<p<\infty$ such that
\begin{equation}\label{hypothesisB7}
\bigg(\barint_{\eta \Delta_Q}v(y)^p\,d\sigma(y)\bigg)^{1/p} \leq
C_1\barint_{\eta \Delta_Q}v(y)\,d\sigma(y),\qquad Q\in\mathbb{D}_{Q_0},
\end{equation}
then $v\in A_\infty^{\rm dyadic}(Q_0)$, with the implicit constants depending on dimension, $p$, $C_0$,
$C_1$, $\eta$ and the $\mathrm{AR}$ constant.
\end{lemma}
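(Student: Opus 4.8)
The goal is to upgrade the reverse Hölder inequality \eqref{hypothesisB7}, which is stated on the dilated surface balls $\eta\Delta_Q$ rather than on the dyadic cubes themselves, to the genuine dyadic $A_\infty$ condition on $Q_0$. The plan is to reduce matters to a statement purely about dyadic cubes and then invoke the standard fact that a dyadic reverse Hölder inequality (together with a doubling-type property) implies dyadic $A_\infty$.

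\textbf{Step 1: control of cubes by dilated surface balls and vice versa.} First I would record the geometric comparisons that relate a dyadic cube $Q\in\mathbb{D}_{Q_0}$, its associated surface ball $\Delta_Q$ (cf. \eqref{deltaQ}, \eqref{deltaQ2}), and the dilate $\eta\Delta_Q$. Since $\Delta(x_Q,2r_Q)\subset Q\subset\widetilde\Delta_Q=\Delta(x_Q,Cr_Q)$ with $c\ell(Q)\le r_Q\le\ell(Q)$, every such set is comparable in measure to $\ell(Q)^n$ by Ahlfors regularity; in particular $\sigma(Q)\approx\sigma(\eta\Delta_Q)\approx\sigma(\widetilde\Delta_Q)\approx\ell(Q)^n$ with constants depending only on $\eta$, the $\mathrm{AR}$ constant, and dimension. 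Moreover $\eta\Delta_Q\subset\widetilde\Delta_Q$ can be covered by a bounded number (depending on $\eta$) of dyadic cubes $Q'\in\mathbb{D}_{Q_0}$ with $\ell(Q')\approx\ell(Q)$ that meet $\eta\Delta_Q$, and conversely $\widetilde\Delta_Q$ is contained in such a bounded union; I would need to be slightly careful near the ``top'' of $Q_0$, but since we only consider $Q\in\mathbb{D}_{Q_0}$ this is harmless up to replacing $C$ by $C\ell(Q_0)^n$-type constants and using property (f) of Lemma~\ref{dyadiccubes} to handle boundary effects.

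\textbf{Step 2: pass the reverse Hölder inequality to dyadic cubes.} Using Step 1, from \eqref{hypothesisB7} I would deduce that for every $Q\in\mathbb{D}_{Q_0}$,
\begin{equation*}
\bigg(\barint_{Q}v(y)^p\,d\sigma(y)\bigg)^{1/p}\le C\,\barint_{Q}v(y)\,d\sigma(y).
\end{equation*}
Indeed, the left-hand side over $Q$ is bounded by the left-hand side over $\widetilde\Delta_Q$ times a constant (since $\sigma(Q)\approx\sigma(\widetilde\Delta_Q)$), and $\widetilde\Delta_Q$ is a bounded union of sets $\eta\Delta_{Q'}$ of comparable size; applying \eqref{hypothesisB7} on each and the hypothesis $v(Q')\le C_0 v(\eta\Delta_{Q'})$ together with $v(\eta\Delta_{Q'})\le v(\widetilde\Delta_{Q'})\approx$ (after another covering) a constant times $v(Q)$ via the doubling-type chain, one controls everything by $\barint_Q v\,d\sigma$. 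The hypothesis $0<v(Q)\le C_0 v(\eta\Delta_Q)$ is exactly what is needed to go back and forth between the $L^1$ averages on $Q$ and on $\eta\Delta_Q$ without losing positivity; note also this forces $v(Q)>0$ for all $Q\in\mathbb{D}_{Q_0}$, so all averages below are genuinely positive.

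\textbf{Step 3: dyadic reverse Hölder $\Rightarrow$ dyadic $A_\infty$.} Once the reverse Hölder inequality holds on all dyadic subcubes of $Q_0$, I would conclude by the classical self-improvement/John--Nirenberg argument for dyadic reverse Hölder weights: if $w:=v$ satisfies $\big(\barint_Q w^p\big)^{1/p}\le C\barint_Q w$ for all $Q\in\mathbb{D}_{Q_0}$, then for any $Q\in\mathbb{D}_{Q_0}$ and any Borel $F\subset Q$, Hölder's inequality gives $\frac{w(F)}{w(Q)}\le\big(\tfrac{\sigma(F)}{\sigma(Q)}\big)^{1/p'}\big(\barint_Q w^p\big)^{1/p}\big/\barint_Q w\le C\big(\tfrac{\sigma(F)}{\sigma(Q)}\big)^{1/p'}$, which is the quantitative $\sigma\Rightarrow w$ comparability; the reverse direction ($\sigma(F)/\sigma(Q)>\alpha\Rightarrow w(F)/w(Q)>\beta$) follows by applying this to the complement $F^c=Q\setminus F$. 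This is precisely the dyadic $A_\infty(Q_0)$ condition, with constants depending only on $p$, $C$, the $\mathrm{AR}$ constant, and dimension, hence ultimately on the allowed parameters $n,p,C_0,C_1,\eta,C_{\mathrm{AR}}$. Alternatively one may cite the measure $w\,d\sigma$ versus $\sigma$ characterization of $A_\infty$ directly.

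\textbf{Main obstacle.} The only real subtlety, and the step I expect to require the most care, is Step 2: making the covering of $\widetilde\Delta_Q$ (or of $\eta\Delta_Q$) by comparably-sized dyadic cubes precise and controlling the resulting $L^1$ averages of $v$ by $\barint_Q v\,d\sigma$ using only the hypothesis $v(Q')\le C_0 v(\eta\Delta_{Q'})$ — in effect one must show $v\,d\sigma$ is ``dyadically doubling on the relevant scales'' as a consequence of that hypothesis plus \eqref{hypothesisB7}, and do so uniformly in $Q$. Handling cubes $Q$ whose dilate $\widetilde\Delta_Q$ is not contained in the ``natural ambient'' $\widetilde\Delta_{Q_0}$ also needs a brief remark (one may simply intersect with $\partial\Omega$ and use Ahlfors regularity, since $\widetilde\Delta_Q\cap\partial\Omega$ is still covered by boundedly many cubes of $\mathbb{D}_{Q_0}$ of size $\approx\ell(Q)$ when $Q\ne Q_0$, and the case $Q=Q_0$ is trivial). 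Everything else is routine and follows the outline of \cite[Lemma B.7]{MR2833577}.
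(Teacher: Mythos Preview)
Your Step 2 has a genuine gap: you need to bound $\big(\barint_Q v^p\big)^{1/p}$ from above, which requires controlling $\int_Q v^p$ over the portion of $Q$ lying \emph{outside} $\eta\Delta_Q$. Your covering argument tries to do this via neighboring $\eta\Delta_{Q'}$'s, but to bring the resulting averages $\barint_{\eta\Delta_{Q'}} v$ back to $\barint_Q v$ you need $v(\eta\Delta_{Q'})\lesssim v(Q)$ for cubes $Q'$ of the same generation as $Q$ that are \emph{not} contained in $Q$. This is precisely a doubling statement for the measure $v\,d\sigma$ across sibling cubes, and it does \emph{not} follow from the hypotheses: the assumption $v(Q)\le C_0 v(\eta\Delta_Q)$ only compares a cube with a smaller concentric ball inside it, never with neighboring cubes. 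You correctly flag this as the ``main obstacle'', but there is no way to close it without assuming what you are trying to prove (doubling of $v$ is a \emph{consequence} of the dyadic $A_\infty$ conclusion, so invoking it here would be circular).

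The paper avoids this entirely by never attempting to establish a reverse H\"older inequality on the dyadic cubes. Instead it works directly with the inclusion $\eta\Delta_Q\subset Q$: given $E\subset Q$ with $\sigma(E)/\sigma(Q)>1-\alpha$, the complement $F_0:=\eta\Delta_Q\setminus E$ satisfies $\sigma(F_0)/\sigma(\eta\Delta_Q)\lesssim\alpha$, so by H\"older and \eqref{hypothesisB7} one gets $v(F_0)/v(\eta\Delta_Q)\lesssim\alpha^{1/p'}$. Then the hypothesis $v(Q)\le C_0 v(\eta\Delta_Q)$ is used \emph{once}, in the direction it is stated, to conclude
\[
\frac{v(E)}{v(Q)}\ge\frac{v(E\cap\eta\Delta_Q)}{C_0\,v(\eta\Delta_Q)}=\frac{1}{C_0}\Big(1-\frac{v(F_0)}{v(\eta\Delta_Q)}\Big)>\frac{1}{C_0}\big(1-C_1(C\alpha)^{1/p'}\big),
\]
which for $\alpha$ small enough gives the $A_\infty^{\rm dyadic}$ condition directly. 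No covering, no passage to neighboring cubes, and no doubling of $v$ is required.
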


\begin{proof}
We first prove  that for every $Q\in\mathbb{D}_{Q_0}$ and every Borel set $F\subset\eta \Delta_Q$, there holds
\begin{equation}\label{lanjaron}
\frac{v(F)}{v(\eta \Delta_Q)}\leq
C_1\bigg(\frac{\sigma(F)}{\sigma(\eta \Delta_Q)}\bigg)^{1/p'}.
\end{equation}
Indeed, 
using Hölder's inequality together with \eqref{hypothesisB7},
we obtain
\begin{multline*}
\frac{v(F)}{\sigma(\eta \Delta_Q)}
=
\frac1{\sigma(\eta \Delta_Q)}
\int_Fv(y)\,d\sigma(y)
\leq
\bigg(\frac{\sigma(F)}{\sigma(\eta \Delta_Q)}\bigg)^{1/p'} 
\Big(\barint_{\eta \Delta_Q}v(y)^p\,d\sigma(y)\Big)^{1/p} \\
\leq
C_1
\bigg(\frac{\sigma(F)}{\sigma(\eta \Delta_Q)}\bigg)^{1/p'} 
\barint_{\eta \Delta_Q}v(y)\,d\sigma(y),
\end{multline*}
which is equivalent to \eqref{lanjaron}. 

To obtain that $v\in A_\infty^{\text{dyadic}}(Q_0)$, we observe that $\sigma(Q)\leq C\sigma(\eta\Delta_Q)$ with  $C>1$ depending only on AR and $n$. Fix then 
$0<\alpha<(CC_1^{p'})^{-1}$ and take $E\subset Q$ such that
$\sigma(E)>(1-\alpha)\sigma(Q)$. Writing $E_0=E\cap\eta\Delta_Q$ and $F_0=\eta\Delta_Q\setminus E$, it is clear that
$$
(1-\alpha)\frac{\sigma(Q)}{\sigma(\eta\Delta_Q)}
<
\frac{\sigma(E)}{{\sigma(\eta\Delta_Q)}}
\leq
\frac{\sigma(E_0)}{\sigma(\eta\Delta_Q)}+\frac{\sigma(Q\setminus\eta\Delta_Q)}{\sigma(\eta\Delta_Q)}
=
\frac{\sigma(E_0)}{\sigma(\eta\Delta_Q)}+\frac{\sigma(Q)}{\sigma(\eta\Delta_Q)}-1,
$$
and hence 
\begin{equation}\label{alsowehave}
\frac{\sigma(F_0)}{\sigma(\eta\Delta_Q)}=
1-\frac{\sigma(E_0)}{\sigma(\eta\Delta_Q)}
<
\alpha\,\frac{\sigma(Q)}{\sigma(\eta\Delta_Q)}
\le
C\,\alpha
.
\end{equation}
Combining \eqref{lanjaron} and \eqref{alsowehave} we obtain $v(F_0)/v(\eta\Delta_Q)< C_1(C\alpha)^{1/p'}$. This and the fact that $v(Q)\leq C_0 v(\eta\Delta_Q)$ yield
$$
\frac{v(E)}{v(Q)}
\geq
\frac{v(\eta\Delta_Q)}{v(Q)}\,\frac{v(E_0)}{v(\eta\Delta_Q)}
\ge
C_0^{-1}\,\left(1-\frac{v(F_0)}{v(\eta\Delta_Q)}\right)
>
C_0^{-1}\,(1-C_1(C\alpha)^{1/p'})=:1-\beta,
$$
with $0<\beta<1$ by our choice of $\alpha$. This eventually proves that $v\in A_\infty^{\text{dyadic}}(Q_0)$ and the proof is complete.
\end{proof}

The following auxiliary result is standard and its proof is left to the interested reader.

\begin{lemma}\label{densidad}
Let $\varphi\in C_c^{\infty}(\re)$ be such that $\mathbf{1}_{(0,1)}\leq\varphi\leq\mathbf{1}_{(0,2)}$. For each $t>0$ and  $h\in L^1_{\rm{loc}}(\partial\Omega)$ define 
\begin{equation}\label{defgt}
P_t h(x):=\int_{\partial\Omega}\varphi_t(x,y)h(y)\,d\sigma(y),\qquad x\in \partial\Omega,
\end{equation}
where
$$
\varphi_t(x,y):=\frac{\varphi\big(\frac{|x-y|}{t}\big)}{\int_{\partial\Omega}\varphi\big(\frac{|x-z|}{t}\big)\,d\sigma(z)},\qquad x,y\in \partial\Omega.
$$
The following hold:

\begin{list}{$(\theenumi)$}{\usecounter{enumi}\leftmargin=1cm \labelwidth=1cm \itemsep=0.1cm \topsep=.2cm \renewcommand{\theenumi}{\alph{enumi}}}

\item $P_t$ is uniformly bounded on $L^q(\partial\Omega)$ for every $1\le q\le\infty$.

\item If $h\in L^q(\partial\Omega)$, $1\le q\le \infty$, and $t>0$ then $P_t h\in L^\infty(\partial\Omega)\cap \mathrm{Lip}(\partial\Omega)$.

\item If $h\in L^q(\partial\Omega)$, $1\le q< \infty$,  then $P_t h\longrightarrow h$ in $L^q(\partial\Omega)$ as $t\to 0^+$.

\item If $h\in C_c(\partial\Omega)$, then $P_t h(x)\longrightarrow h(x)$ as $t\to 0^+$ for every $x\in\partial\Omega$.

\item If $h\in L^q(\partial\Omega)$, $1\le q\le \infty$, with $\supp h \subset \Delta(x_0,r_0)$  then $\supp P_th\subset \Delta(x_0,r_0+2\,t)$.

\end{list}
\end{lemma}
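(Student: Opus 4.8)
The plan is to regard $\varphi_t(x,\cdot)\,d\sigma$ as a normalized approximate identity adapted to the Ahlfors regular measure $\sigma$, and to run the classical mollifier arguments. I would first record two structural facts, using only that $\sigma(\Delta(x,r))\approx r^n$ and $\mathbf{1}_{(0,1)}\le\varphi\le\mathbf{1}_{(0,2)}$: \emph{(i)} for each $x\in\partial\Omega$ and $t>0$, $\varphi_t(x,\cdot)\ge 0$ is supported in $\{y\in\partial\Omega:|x-y|<2t\}$ and $\int_{\partial\Omega}\varphi_t(x,y)\,d\sigma(y)=1$; \emph{(ii)} $0\le\varphi_t(x,y)\lesssim t^{-n}$, since the denominator defining $\varphi_t$ is at least $\sigma(\Delta(x,t))\gtrsim t^n$ while the numerator is at most $1$. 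Combining \emph{(i)} and \emph{(ii)} with $\sigma(\Delta(y,2t))\lesssim t^n$ also yields $\int_{\partial\Omega}\varphi_t(x,y)\,d\sigma(x)\lesssim 1$ uniformly in $y$ and $t$. Then \emph{(a)} follows from Jensen's inequality ($\varphi_t(x,\cdot)\,d\sigma$ is a probability measure, so $|P_th(x)|^q\le\int\varphi_t(x,y)|h(y)|^q\,d\sigma(y)$ for $1\le q<\infty$), Fubini, and the last bound; the case $q=\infty$ is immediate. The boundedness half of \emph{(b)} follows from H\"older's inequality and $\|\varphi_t(x,\cdot)\|_{L^{q'}}\le\|\varphi_t(x,\cdot)\|_{\infty}^{1/q}\|\varphi_t(x,\cdot)\|_{L^1}^{1/q'}\lesssim t^{-n/q}$, giving $|P_th(x)|\lesssim t^{-n/q}\|h\|_{L^q}$ uniformly in $x$.

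For the Lipschitz half of \emph{(b)} I would write $\varphi_t(x,y)=\psi_t(x,y)/D_t(x)$ with $\psi_t(x,y):=\varphi(|x-y|/t)$ and $D_t(x):=\int_{\partial\Omega}\psi_t(x,z)\,d\sigma(z)$, and use the elementary bounds $D_t(x)\approx t^n$, $|\psi_t(x,y)-\psi_t(x',y)|\lesssim t^{-1}|x-x'|$ (the difference supported where $\min(|x-y|,|x'-y|)<2t$), and hence $|D_t(x)-D_t(x')|\lesssim t^{n-1}|x-x'|$ whenever $|x-x'|\le t$. The quotient rule then gives $|\varphi_t(x,y)-\varphi_t(x',y)|\lesssim t^{-n-1}|x-x'|\,\mathbf{1}_{\{\min(|x-y|,|x'-y|)<2t\}}(y)$ for $|x-x'|\le t$, and H\"older's inequality produces $|P_th(x)-P_th(x')|\lesssim t^{-n-1+n/q'}|x-x'|\,\|h\|_{L^q}$ in that range; for $|x-x'|>t$ one simply invokes the sup bound already proved. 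Thus $P_th\in L^\infty(\partial\Omega)\cap\mathrm{Lip}(\partial\Omega)$, with a Lipschitz constant depending on $t$, $q$ and $\|h\|_{L^q}$.

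Statement \emph{(d)} is immediate from $\int\varphi_t(x,y)\,d\sigma(y)=1$: one has $P_th(x)-h(x)=\int\varphi_t(x,y)\big(h(y)-h(x)\big)\,d\sigma(y)$, so $|P_th(x)-h(x)|\le\sup\{|h(y)-h(x)|:y\in\partial\Omega,\ |x-y|<2t\}\to 0$ as $t\to 0^+$ by continuity of $h$ at $x$ (the convergence being uniform when $h\in C_c(\partial\Omega)$, by uniform continuity). Statement \emph{(e)} is just the triangle inequality: if $P_th(x)\ne 0$ there is $y$ with $|x-y|<2t$ and $y\in\supp h\subset\Delta(x_0,r_0)$, whence $|x-x_0|<r_0+2t$. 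Finally, for \emph{(c)} with $1\le q<\infty$: for $h\in C_c(\partial\Omega)$ the uniform convergence from \emph{(d)} together with the uniform support control from \emph{(e)} gives $P_th\to h$ in $L^q(\partial\Omega)$; for general $h$ and $\varepsilon>0$ pick $g\in C_c(\partial\Omega)$ with $\|h-g\|_{L^q}<\varepsilon$ and estimate $\|P_th-h\|_{L^q}\le\|P_t(h-g)\|_{L^q}+\|P_tg-g\|_{L^q}+\|g-h\|_{L^q}\lesssim\varepsilon+\|P_tg-g\|_{L^q}$ using the uniform-in-$t$ bound from \emph{(a)}, and then let $t\to0^+$. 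The only step requiring any care is the Lipschitz estimate in \emph{(b)} --- the bookkeeping for the difference quotient of the normalized kernel --- while everything else is a routine approximate-identity argument once the Ahlfors-regularity bounds \emph{(i)}--\emph{(ii)} are in hand.
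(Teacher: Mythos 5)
Your proof is correct, and it is exactly the standard approximate-identity argument the authors have in mind: the paper explicitly omits the proof of this lemma as ``standard and left to the interested reader,'' and your two structural facts (normalization plus the Ahlfors-regularity bound $\varphi_t(x,y)\lesssim t^{-n}$) together with Jensen/Fubini for (a), the quotient-rule Lipschitz estimate for (b), and the density argument for (c) are precisely the expected route. The only (inconsequential) caveat is that the lower bound $\sigma(\Delta(x,t))\gtrsim t^n$ used in fact (ii) is only available for $t\lesssim\diam(\partial\Omega)$, but all applications in the paper take $t$ small, and the estimates you actually use downstream ($\int\varphi_t(x,y)\,d\sigma(x)\lesssim 1$ and the normalization in $y$) hold for all $t>0$ regardless.
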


Fix $Q_0\in\mathbb{D}(E)$ and consider the operators $\mathcal{A}_{Q_0}$, $\mathfrak{C}_{Q_0}$ defined by
\begin{equation}\label{definicionA-C}
\mathcal{A}_{Q_0}\alpha(x):=\bigg(\sum_{x\in Q\in\mathbb{D}_{Q_0}}\frac{1}{\ell(Q)^{n}}\,\alpha_{Q}^2\bigg)^{1/2},\quad
\mathfrak{C}_{Q_0}\alpha(x):=\sup_{x\in Q\in\mathbb{D}_{Q_0}}\bigg(\frac{1}{\sigma(Q)}\sum_{Q'\in\mathbb{D}_{Q}}\alpha_{Q'}^2\bigg)^{1/2},
\end{equation}
where $\alpha=\{\alpha_Q\}_{Q\in\mathbb{D}_{Q_0}}$ is a sequence of real numbers. Note that these operators are discrete analogues of those used in \cite{coifmanmeyerstein} to develop the theory of tent spaces. Sometimes, we use a truncated version of $\mathcal{A}_{Q_0}$, defined for each $k\geq 0$ by
$$
\mathcal{A}^k_{Q_0}\alpha(x):=\Bigg(\sum_{x\in Q\in\mathbb{D}_{Q_0}^k}\frac{1}{\ell(Q)^{n}}\,\alpha_{Q}^2\Bigg)^{1/2},
$$
where $\mathbb{D}_{Q_0}^k$ is the collection of $Q\in\mathbb{D}_{Q_0}$ such that $\ell(Q)\leq 2^{-k}\ell(Q_0)$. The following proposition is a discrete version of \cite[Theorem 1]{coifmanmeyerstein}.

\begin{lemma}\label{tentspaces}
Suppose that $E\subset\re^{n+1}$ is $n$-dimensional $\mathrm{AR}$, fix $Q_0\in\mathbb{D}(E)$, let $\mathcal{A}_{Q_0}$ and $\mathfrak{C}_{Q_0}$ be the operators defined in \eqref{definicionA-C} respectively. There exists $C$, depending only on dimension and the $\mathrm{AR}$ constant, such that for every $\alpha=\{\alpha_{Q}\}_{Q\in\mathbb{D}_{Q_0}}$, $\beta=\{\beta_{Q}\}_{Q\in\mathbb{D}_{Q_0}}$ sequences of real numbers, we have that
\begin{equation}\label{cotatentspaces}
\sum_{Q\in\mathbb{D}_{Q_0}}\,|\alpha_{Q}\beta_{Q}|\leq C\int_{Q_0}\mathcal{A}_{Q_0}\alpha(x)\mathfrak{C}_{Q_0}\beta(x)\,d\sigma(x).
\end{equation}
\end{lemma}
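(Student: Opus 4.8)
The estimate \eqref{cotatentspaces} is the discrete analogue of the tent-space duality inequality of Coifman--Meyer--Stein, and I would prove it by mimicking their stopping-time argument in the dyadic setting. The plan is to fix a sequence $\alpha$ and introduce, for each $\lambda>0$, the ``bad'' set where the area functional is large, namely $O_\lambda:=\{x\in Q_0:\mathcal{A}_{Q_0}\alpha(x)>\lambda\}$, which is a relatively open subset of $Q_0$. I would decompose $O_\lambda$ into maximal dyadic cubes $\{Q_j^\lambda\}\subset\mathbb{D}_{Q_0}$ (the maximal cubes contained in $O_\lambda$, which exist since $O_\lambda$ is open and dyadic), and record the key elementary fact: a cube $Q\in\mathbb{D}_{Q_0}$ has $Q\subset O_\lambda$ if and only if $Q\subset Q_j^\lambda$ for some $j$, while if $Q\not\subset O_\lambda$ then $Q$ contains a point outside $O_\lambda$, i.e.\ a point where $\mathcal{A}_{Q_0}\alpha\le\lambda$.

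First I would reduce to sequences with finitely many nonzero entries by a monotone convergence / truncation argument, so that all sums below are finite and manipulations are legitimate; the final constant will be uniform so the general case follows by passing to the limit. Then the core computation is to write
\[
\sum_{Q\in\mathbb{D}_{Q_0}}\alpha_Q^2\,g_Q
\]
for a suitable auxiliary weight, but more directly I would argue as follows. Using the layer-cake / distributional decomposition, for each $Q\in\mathbb{D}_{Q_0}$ there is a largest dyadic ancestor-free threshold; concretely I would sum over the stopping cubes by writing
\[
\sum_{Q\in\mathbb{D}_{Q_0}}|\alpha_Q\beta_Q|
=\sum_{Q\in\mathbb{D}_{Q_0}}|\alpha_Q\beta_Q|\int_0^\infty \mathbf{1}_{\{\mathcal{A}_{Q_0}\alpha>\lambda \text{ somewhere avoidable}\}}\,\dots
\]
and instead more cleanly: by Cauchy--Schwarz in the $Q$-sum against the measure $\sigma(Q)\ell(Q)^{-n}\approx 1$, it suffices to control $\sum_Q \ell(Q)^{-n}\alpha_Q^2\,\mathbf 1_{E_Q}$-type quantities, where $E_Q$ is the ``good part'' of $Q$. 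The honest route, which I would follow, is the original CMS one: set, for each stopping cube $Q_j^\lambda$, the pieces of $\alpha$ that ``first become large'' at level $\lambda$ inside $Q_j^\lambda$, integrate the identity $\sum_Q \ell(Q)^{-n}\alpha_Q^2\mathbf 1_Q(x)=\mathcal{A}_{Q_0}\alpha(x)^2$ over the complement of $O_\lambda$ to bound $\sum_{Q\not\subset O_\lambda}\ell(Q)^{-n}\alpha_Q^2\,\sigma(Q\setminus O_\lambda)$ by $\lambda^2\sigma(Q_0)$ (or by $\int\mathcal{A}_{Q_0}\alpha^2$ localized), and then sum a geometric series over dyadic levels $\lambda=2^k$. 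The point is that for $Q\not\subset O_\lambda$ one has $\sigma(Q\setminus O_\lambda)\gtrsim\sigma(Q)\approx\ell(Q)^n$ whenever, say, $\sigma(Q\cap O_\lambda)\le\frac12\sigma(Q)$, which is arranged by the AR property and a further splitting at level $\lambda$ versus $\sqrt{2}\lambda$.

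Assembling: writing $\mathbb{D}_{Q_0}=\bigcup_{k\in\mathbb Z}\mathcal{G}_k$ where $\mathcal{G}_k$ consists of those $Q$ with $2^k<\sup_{x\in Q}\min(\dots)\le 2^{k+1}$ — more precisely with $\mathfrak{C}_{Q_0}\beta\approx 2^k$ on a nontrivial portion of $Q$, or dually stratifying by the size of $\mathcal{A}_{Q_0}\alpha$ on $Q$ — one gets
\[
\sum_{Q\in\mathbb{D}_{Q_0}}|\alpha_Q\beta_Q|
\lesssim\sum_{k}2^k\sum_{Q_j^{2^k}}\bigg(\sum_{Q\subset Q_j^{2^k},\,Q\not\subset O_{2^{k+1}}}|\beta_Q|^2\bigg)^{1/2}\sigma(Q_j^{2^k})^{1/2},
\]
and each inner sum is bounded by $\mathfrak{C}_{Q_0}\beta$ evaluated at a point of $Q_j^{2^k}$ outside $O_{2^{k+1}}$, which lies in $\{\mathcal{A}_{Q_0}\alpha\le 2^{k+1}\}$; distributing and re-summing recovers $\int_{Q_0}\mathcal{A}_{Q_0}\alpha\cdot\mathfrak{C}_{Q_0}\beta\,d\sigma$ up to a dimensional/AR constant. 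Finally one undoes the finiteness reduction by monotone convergence.

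\textbf{Main obstacle.} The analytic content is entirely standard; the only real care needed is bookkeeping. The delicate point is the geometric lemma that lets one pass from ``$Q$ not contained in $O_\lambda$'' to a quantitative lower bound $\sigma(Q\setminus O_\lambda)\gtrsim\sigma(Q)$: in the Euclidean tent-space setting this is immediate from the Whitney/geometry of cones, but in the present AR dyadic setting one must either insert an extra comparison of levels ($\lambda$ vs.\ $2\lambda$) together with a weak-type bound for the ``dyadic'' maximal operator controlling $\sigma(Q\cap O_\lambda)$, or invoke property $(f)$ of Lemma~\ref{dyadiccubes} (small boundary layers) to avoid pathologies. Making this step clean — and keeping the dependence of constants only on dimension and the AR constant, as claimed — is the part I would write out most carefully; everything else is a routine geometric-series summation.
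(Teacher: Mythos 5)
Your strategy is genuinely different from the paper's, and as outlined it has a gap at the final reassembly step which is not mere bookkeeping. You stratify by the level sets $O_\lambda=\{\mathcal{A}_{Q_0}\alpha>\lambda\}$ of the area functional of $\alpha$. The paper instead runs a pointwise stopping time on the $\beta$ side: it shows (estimate \eqref{acotacionk0}) that the average over $Q$ of the truncated square functional $\mathcal{A}^{k_Q}_{Q_0}\beta(\cdot)^2$ is controlled by $\mathfrak{C}_{Q_0}\beta(x)^2$ for every $x\in Q$, defines $k(x)$ as the first truncation level at which $\mathcal{A}^{k}_{Q_0}\beta(x)\le C_1\mathfrak{C}_{Q_0}\beta(x)$, and proves that $F_{2,Q}=\{x\in Q:\,k(x)\le k_Q\}$ occupies at least half of $Q$. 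Writing $|\alpha_Q\beta_Q|\lesssim\sigma(F_{2,Q})\,\ell(Q)^{-n}|\alpha_Q\beta_Q|$, exchanging sum and integral, and applying Cauchy--Schwarz pointwise on $F_{2,Q}$ then produces the product $\mathcal{A}_{Q_0}\alpha(x)\,\mathcal{A}^{k(x)}_{Q_0}\beta(x)\le C_1\,\mathcal{A}_{Q_0}\alpha(x)\,\mathfrak{C}_{Q_0}\beta(x)$ on a \emph{common} ample subset of each $Q$. That common set is exactly what your decomposition fails to provide.

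Concretely: as you note, $Q\not\subset O_{2^{k+1}}$ only gives $Q\setminus O_{2^{k+1}}\neq\emptyset$, so to run Cauchy--Schwarz with the weight $\sigma(Q\setminus O_{2^{k+1}})/\sigma(Q)$ you must replace $O_\lambda$ by the enlarged sets $\widetilde O_\lambda=\{x:\,\sup_{x\in Q}\sigma(Q\cap O_\lambda)/\sigma(Q)>1/2\}$. That repairs the two Cauchy--Schwarz factors but breaks the step you call routine: after summing over the maximal cubes of $\widetilde O_{2^k}$ you are left with $\sum_k 2^k\int_{\widetilde O_{2^k}}\mathfrak{C}_{Q_0}\beta\,d\sigma$, whereas the layer-cake identity $\sum_k2^k\mathbf 1_{O_{2^k}}(x)\approx\mathcal{A}_{Q_0}\alpha(x)$ reconstitutes $\mathcal{A}_{Q_0}\alpha$ only from the \emph{original} level sets. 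The bound $\sigma(\widetilde O_\lambda)\lesssim\sigma(O_\lambda)$ is unweighted, and $\mathfrak{C}_{Q_0}\beta\,d\sigma$ carries no doubling or $A_\infty$ information, so $\int_{\widetilde O_\lambda}\mathfrak{C}_{Q_0}\beta\,d\sigma$ need not be comparable to $\int_{O_\lambda}\mathfrak{C}_{Q_0}\beta\,d\sigma$ (take $\alpha$ and $\beta$ supported on complementary halves of a cube: the left side is large, the right side and the pairing are both negligible). Hence ``distributing and re-summing'' does not recover $\int_{Q_0}\mathcal{A}_{Q_0}\alpha\,\mathfrak{C}_{Q_0}\beta\,d\sigma$. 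The level-set/atomic route does yield the weaker bound $\|\mathcal{A}_{Q_0}\alpha\|_{L^1(Q_0)}\|\mathfrak{C}_{Q_0}\beta\|_{L^\infty(Q_0)}$, but for the pointwise-product pairing you need the stopping-time device (or some substitute that forces both factors to live on the same ample portion of each $Q$), which is what the paper's proof supplies.
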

\begin{proof}
It is easy to see that, without loss of generality, we may assume that $\beta_Q=0$ when $\ell(Q)\leq 2^{-N}\ell(Q_0)$ for some $N\in\mathbb{N}$. In that scenario, we need to establish \eqref{cotatentspaces} with $C$ independent of $N$. Fix then such a $\beta$ and for $Q\in\mathbb{D}_{Q_0}$, let $k_Q\geq 0$ be so that $\ell(Q)=2^{-k_Q}\ell(Q_0)$. Suppose that $Q'\in\mathbb{D}_{Q_0}$ satisfies $\ell(Q')\leq 2^{-k_Q}\ell(Q_0)=\ell(Q)$ and $Q'\cap Q\neq\emptyset$, then necessarily $Q'\in\mathbb{D}_{Q}$. Therefore, using the $\mathrm{AR}$ property we obtain
$$
\int_{Q}\mathcal{A}_{Q_0}^{k_Q}\beta(y)^2\,d\sigma(y)=\int_{Q}\sum_{Q'\in\mathbb{D}_{Q}}\mathbf{1}_{Q'}(y)\frac{1}{\ell(Q')^{n}}\,\beta_{Q'}^2\,d\sigma(y)
\lesssim\sum_{Q'\in\mathbb{D}_{Q}}\,\beta_{Q'}^2.
$$
Dividing both sides by $\sigma(Q)$, we have proved that for every $Q\in\mathbb{D}_{Q_0}$ and every $x\in Q$ we have that
\begin{equation}\label{acotacionk0}
\eta_Q:=\barint_{Q}\mathcal{A}_{Q_0}^{k_Q}\beta(y)^2\,d\sigma(y)\leq C_0\,\mathfrak{C}_{Q_0}\beta(x)^2,\qquad\ell(Q)=2^{-k_Q}\ell(Q_0),
\end{equation}
with $C_0$ depending only on the $\mathrm{AR}$ constant. Since $\beta_Q=0$ for $\ell(Q)\leq 2^{-N}\ell(Q_0)$, we have that $\mathcal{A}_{Q_0}\beta(x)\leq C_N<\infty$ and hence $\eta_Q\leq C_N^2<\infty$. Now, we set $C_1=2\sqrt{C_0}$ and define
$$
F_0:=\big\{x\in Q_0:\:\mathcal{A}_{Q_0}^k\beta(x)> C_1\mathfrak{C}_{Q_0}\beta(x),\;\forall k\geq 0\big\}.
$$
In particular, using \eqref{acotacionk0}, we have $\mathcal{A}_{Q_0}^{k_Q}\beta(x)>2\eta_Q^{1/2}$ for each $x\in Q\cap F_0$. We claim that $4\sigma(Q\cap F_0)\leq\sigma(Q)$. Indeed, if $\eta_Q=0$ the estimate is trivial since $Q\cap F_0=\emptyset$. On the other hand, if $\eta_Q>0$, we have
$$
4\eta_Q\sigma(Q\cap F_0)\leq\int_{Q\cap F_0}\mathcal{A}_{Q_0}^{k_Q}\beta(y)^2\,d\sigma(y)\leq\eta_Q\sigma(Q),
$$
and the desired estimate follows since $0<\eta_Q<\infty$. Let us now consider
\begin{equation}\label{defkx}
k(x):=\min\big\{k\geq 0:\:\mathcal{A}_{Q_0}^k\beta(x)\leq C_1\mathfrak{C}_{Q_0}\beta(x)\big\},\qquad x\in Q_0\setminus F_0.
\end{equation}
Setting $F_{1,Q}:=\{x\in Q\setminus F_0:\,k(x)>k_Q\}$ and using \eqref{acotacionk0} we obtain
$$
F_{1,Q}\subset\{x\in Q\setminus F_0:\:\mathcal{A}_{Q_0}^{k_Q}\beta(x)>2\eta_Q^{1/2}\big\}.
$$
Applying Chebychev's inequality, it follows that
$$
\sigma(F_{1,Q})\leq\frac{1}{4\eta_Q}\int_{Q\setminus F_0}\mathcal{A}_{Q_0}^{k_Q}\beta(y)^2\,d\sigma(y)\leq\frac{1}{4}\sigma(Q).
$$
Setting $F_{2,Q}:=\{x\in Q\setminus F_0:\,k(x)\leq k_Q\}$, and gathering the above estimates, we have
$$
\sigma(F_{2,Q})=\sigma(Q)-\sigma(Q\cap F_0)-\sigma(F_{1,Q})\geq\frac{1}{2}\sigma(Q).
$$
Hence, the $\mathrm{AR}$ property, Cauchy-Schwarz's inequality and \eqref{defkx} yield
\begin{align*}
\sum_{Q\in\mathbb{D}_{Q_0}}|\alpha_Q\beta_Q|
&\lesssim
\sum_{Q\in\mathbb{D}_{Q_0}}\sigma(F_{2,Q})\frac{|\alpha_Q\beta_Q|}{\ell(Q)^{n}}
\leq
\int_{Q_0\setminus F_0}\sum_{Q\in\mathbb{D}_{Q_0}}\frac{|\alpha_Q\beta_Q|}{\ell(Q)^{n}}\mathbf{1}_{F_{2,Q}}(x)\,d\sigma(x)
\\
&\lesssim
\int_{Q_0\setminus F_0}\mathcal{A}_{Q_0}\alpha(x)\bigg(\sum_{Q\in\mathbb{D}_{Q_0}}\frac{1}{\ell(Q)^{n}}\,\beta_{Q}^2\mathbf{1}_{F_{2,Q}}(x)\bigg)^{1/2}\,d\sigma(x)
\\
&\lesssim
\int_{Q_0\setminus F_0}\mathcal{A}_{Q_0}\alpha(x)\mathcal{A}_{Q_0}^{k(x)}\beta(x)\,d\sigma(x)
\\
&\lesssim
\int_{Q_0}\mathcal{A}_{Q_0}\alpha(x)\mathfrak{C}_{Q_0}\beta(x)\,d\sigma(x),
\end{align*}
where we have used that $Q\in\mathbb{D}_{Q_0}^{k(x)}$ for each $x\in F_{2,Q}$. As the implicit constant does not depend on $N\in\mathbb{N}$, this completes the proof of \eqref{cotatentspaces}.
\end{proof}
\medskip

\begin{lemma}\label{lema272'}
Suppose that $\Omega\subset\re^{n+1}$ is a bounded open set such that $\partial\Omega$ satisfies the $\mathrm{AR}$ property. Let $L_0$, $L_1$ be elliptic operators, and let $u_0\in W^{1,2}(\Omega)$ be a weak solution of $L_0u_0=0$ in $\Omega$. Then,
\begin{equation}\label{272'}
\iint_{\Omega}A_0(Y)\nabla_YG_{L_1}(Y,X)\cdot\nabla u_0(Y)\,dY=0,\quad\text{for a.e. }X\in\Omega.
\end{equation}
\end{lemma}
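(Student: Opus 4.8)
The plan is to fix $X\in\Omega$ (in the full‑measure set on which the recalled properties of the Green function hold, so that the identity is obtained for a.e.\ $X$) and to use, as a test function in the weak formulation of $L_0u_0=0$, a truncation of $G_{L_1}(\cdot,X)$ that removes its pole. Since $u_0$ is a weak solution of $L_0u_0=0$ in $\Omega$, the interior De Giorgi--Nash--Moser theory gives $u_0\in C^{0,\alpha}_{\mathrm{loc}}(\Omega)$ for some $\alpha\in(0,1)$; in particular $u_0$ is continuous at $X$ and
\[
\sup_{B(X,\rho)}|u_0-u_0(X)|\lesssim\rho^{\alpha},\qquad 0<\rho<\tfrac{1}{2}\delta(X),
\]
with constant depending on $X$ and $u_0$. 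Set $r_0:=\tfrac{1}{8}\delta(X)$, and for $0<r<r_0$ pick $\varphi_r\in C_c^\infty(B(X,2r))$ with $\mathbf{1}_{B(X,r)}\le\varphi_r\le\mathbf{1}_{B(X,2r)}$ and $|\nabla\varphi_r|\lesssim r^{-1}$. By \eqref{obsgreen2} (this is where the boundedness of $\Omega$ is used) and the symmetry $G_{L_1}(X,\cdot)=G_{L_1}(\cdot,X)$, the function $w_r:=(1-\varphi_r)G_{L_1}(\cdot,X)$ lies in $W_0^{1,2}(\Omega)$, hence is admissible, and
\[
0=\iint_\Omega A_0\,\nabla u_0\cdot\nabla w_r\,dY .
\]

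The next step is to check that $A_0\nabla u_0\cdot\nabla G_{L_1}(\cdot,X)\in L^1(\Omega)$, so the previous identity can be split. Away from the pole this is clear, since $\nabla u_0\in L^2(\Omega)$ and $\nabla G_{L_1}(\cdot,X)\in L^2(\Omega\setminus B(X,r_0))$ by \eqref{obsgreen1}, so Cauchy--Schwarz handles the integral over $\Omega\setminus B(X,r_0)$. Near the pole I would decompose $B(X,r_0)\setminus\{X\}$ into the dyadic annuli $A_j:=B(X,\rho_{j-1})\setminus B(X,\rho_j)$ with $\rho_j:=2^{-j}r_0$, and on each $A_j$ combine the Caccioppoli inequality for $G_{L_1}(\cdot,X)$, which solves $L_1=0$ in $\Omega\setminus\{X\}$ (apply the Green-function identity with test functions supported in $\Omega\setminus\{X\}$), with the size bound \eqref{sizestimate} to get $\iint_{A_j}|\nabla G_{L_1}(\cdot,X)|^2\lesssim\rho_j^{1-n}$, and the Caccioppoli inequality for $u_0$ (subtracting the constant $u_0(X)$) with the H\"older bound above to get $\iint_{A_j}|\nabla u_0|^2\lesssim\rho_j^{n-1+2\alpha}$. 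By Cauchy--Schwarz, $\iint_{A_j}|\nabla u_0|\,|\nabla G_{L_1}(\cdot,X)|\lesssim\rho_j^{\alpha}$; this is summable in $j$, and its tails vanish, so that $\iint_{B(X,\rho)}|\nabla u_0|\,|\nabla G_{L_1}(\cdot,X)|\to0$ as $\rho\to0^{+}$. Granting this, and since $\nabla(\varphi_rG_{L_1}(\cdot,X))$ is supported in $B(X,2r)$, the identity $\iint_\Omega A_0\nabla u_0\cdot\nabla w_r\,dY=0$ rearranges to
\[
\iint_\Omega A_0\,\nabla u_0\cdot\nabla G_{L_1}(\cdot,X)\,dY=\iint_{B(X,2r)}A_0\,\nabla u_0\cdot\nabla\big(\varphi_r\,G_{L_1}(\cdot,X)\big)\,dY .
\]

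It then remains to let $r\to0^{+}$ and to see that the right-hand side vanishes. Expanding $\nabla(\varphi_r\,G_{L_1}(\cdot,X))=\varphi_r\nabla G_{L_1}(\cdot,X)+G_{L_1}(\cdot,X)\nabla\varphi_r$, the first term contributes at most a constant times $\iint_{B(X,2r)}|\nabla u_0|\,|\nabla G_{L_1}(\cdot,X)|$, which tends to $0$ by the tail bound just established. The second term is supported in $B(X,2r)\setminus B(X,r)$, where $|G_{L_1}(\cdot,X)|\lesssim r^{1-n}$ and $|\nabla\varphi_r|\lesssim r^{-1}$; using Cauchy--Schwarz together with the Caccioppoli bound $\iint_{B(X,2r)}|\nabla u_0|^2\lesssim r^{n-1+2\alpha}$ it is seen to be bounded by a constant times $r^{\alpha}$, which also tends to $0$. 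As the left-hand side of the last display does not depend on $r$, it must be $0$. Finally, the symmetry of $A_0$ gives $A_0\nabla u_0\cdot\nabla G_{L_1}(\cdot,X)=A_0\nabla G_{L_1}(\cdot,X)\cdot\nabla u_0$, which is exactly \eqref{272'}.

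The real obstacle is the control near the pole $X$: both the $L^1$-integrability of the integrand and the vanishing of the truncation error rely on $u_0$ being a solution---not just an element of $W^{1,2}(\Omega)$---and hence locally H\"older continuous, which is what feeds the decay $\iint_{B(X,\rho)}|\nabla u_0|^2\lesssim\rho^{n-1+2\alpha}$ and makes the dyadic series $\sum_j\rho_j^{\alpha}$ converge; without a Dini-type modulus here that series could diverge. The remaining ingredients---the interior Caccioppoli inequality, the size bound \eqref{sizestimate}, the memberships recorded in \eqref{obsgreen1}--\eqref{obsgreen2}, and the symmetry of $A_0$---are standard or already available above.
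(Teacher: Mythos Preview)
Your proof is correct and follows the same overall architecture as the paper's: cut off the Green function near its pole, use $(1-\varphi_r)G_{L_1}(\cdot,X)\in W_0^{1,2}(\Omega)$ as a test function in the weak formulation of $L_0u_0=0$, and then show the remaining near-pole contribution vanishes as $r\to0$. The genuine difference is in how that last step is carried out. You invoke De Giorgi--Nash--Moser to obtain local H\"older continuity of $u_0$, feed this into Caccioppoli to get $\iint_{B(X,\rho)}|\nabla u_0|^2\lesssim\rho^{n-1+2\alpha}$, and then sum a convergent dyadic series $\sum_j\rho_j^{\alpha}$; this yields the identity for \emph{every} $X\in\Omega$ (the Green-function properties \eqref{obsgreen1}--\eqref{obsgreen2} hold for all $X$, so your parenthetical about a full-measure set is unnecessary). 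The paper instead avoids any regularity theory for $u_0$: it bounds the error terms by $\varepsilon\,M_2(|\nabla u_0|\mathbf{1}_\Omega)(X_0)$, where $M_2 f=M(|f|^2)^{1/2}$, and then uses only that $M_2(|\nabla u_0|\mathbf{1}_\Omega)\in L^{1,\infty}$ (since $\nabla u_0\in L^2$) to conclude finiteness for a.e.\ $X_0$. The paper's route is lighter in machinery and suffices for the ``a.e.'' statement actually needed downstream; yours is more quantitative and gives a pointwise conclusion at the cost of invoking interior H\"older regularity.
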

\begin{proof}
Let us take a cut-off function $\varphi\in C_c([-2,2])$ such that $0\leq\varphi\leq 1$ and $\varphi\equiv 1$ in $[-1,1]$. Fix $X_0\in\Omega$, for each $0<\varepsilon<\delta(X_0)/16$ we set $\varphi_\varepsilon(X)=\varphi(|X-X_0|/\varepsilon)$ and $\psi_\varepsilon=1-\varphi_\varepsilon$. Using \eqref{obsgreen2} we have that $G_{L_1}(\cdot,X_0)\psi_\varepsilon\in W_0^{1,2}(\Omega)$, which together with the fact that $u_0\in W^{1,2}(\Omega)$ is a weak solution of $L_0u_0=0$ in $\Omega$, implies
$$
\iint_{\Omega}A_0(Y)\nabla \big(G_{L_1}(\cdot,X_0)\psi_\varepsilon\big)(Y)\cdot\nabla u_0(Y)\,dY=0.
$$
Hence, we can write
\begin{multline}
\iint_{\Omega}A_0\nabla G_{L_1}(\cdot,X_0)\cdot\nabla u_0\,dY=
\iint_{\Omega}A_0\nabla\big(G_{L_1}(\cdot,X_0)\varphi_\varepsilon\big)\cdot\nabla u_0\,dY
\\
=\iint_{\Omega}A_0\nabla G_{L_1}(\cdot,X_0)\cdot\nabla u_0\,\varphi_\varepsilon\,dY
+\iint_{\Omega}A_0\nabla\varphi_\varepsilon\cdot\nabla u_0\,G_{L_1}(\cdot,X_0)\,dY=:\mathrm{I}_\varepsilon+\mathrm{II}_\varepsilon.
\end{multline}
In order to simplify the notation we  set $C_j(X_0,\varepsilon):=\{Y\in\re^{n+1}:\,2^{-j+1}\varepsilon\leq|Y-X_0|<2^{-j+2}\varepsilon\}$ for $j\geq 1$. For the first term, we use Caccioppoli's inequality and \eqref{sizestimate}
\begin{align}\label{acotieps}
|\mathrm{I}_\varepsilon|
&\lesssim
\iint_{B(X_0,2\varepsilon)}|\nabla_YG_{L_1}(Y,X_0)||\nabla u_0(Y)|\,dY
\\
&\lesssim
\sum_{j=1}^{\infty}(2^{-j}\varepsilon)^{n+1}\bigg(\bariint_{C_j(X_0,\varepsilon)}|\nabla G_{L_1}(\cdot,X_0)|^2\,dY\bigg)^{1/2}\bigg(\bariint_{B(X_0,2^{-j+2}\varepsilon)}|\nabla u_0|^2\,dY\bigg)^{1/2}\nonumber
\\
&\lesssim
\sum_{j=1}^{\infty}2^{-j}\varepsilon M_2(|\nabla u_0|\mathbf{1}_{\Omega})(X_0)\lesssim\varepsilon M_2(|\nabla u_0|\mathbf{1}_{\Omega})(X_0),\nonumber
\end{align}
where  $M_2f(X):=M(|f|^2)(X)^{1/2}$, with $M$ being the Hardy-Littlewood maximal operator on $\re^{n+1}$. For the second term, using again \eqref{sizestimate},
\begin{multline}\label{acotiieps}
|\mathrm{II}_\varepsilon|\lesssim\varepsilon^{-1}\iint_{C_1(X_0,\varepsilon)}|G_{L_1}(Y,X_0)||\nabla u_0(Y)|\,dY
\\
\lesssim\varepsilon^{-n}\iint_{B(X_0,2\varepsilon)}|\nabla u_0(Y)|\,dY\lesssim \varepsilon M_2(|\nabla u_0|\mathbf{1}_{\Omega})(X_0).
\end{multline}
Combining \eqref{acotieps} and \eqref{acotiieps}, we have proved that, for every $X_0\in\Omega$ and for every $0<\varepsilon<\delta(X_0)/16$,
\begin{equation}\label{cotaM2}
\bigg|\iint_{\Omega}A_0(Y)\nabla_Y G_{L_1}(Y,X_0)\cdot\nabla u_0(Y)\,dY\bigg|\lesssim\varepsilon M_2(|\nabla u_0|\mathbf{1}_{\Omega})(X_0).
\end{equation}
Recall that $M_2(|\nabla u_0|\mathbf{1}_{\Omega})\in L^{1,\infty}(\Omega)$ as $|\nabla u_0|\in L^2(\Omega)$, thus $M_2(|\nabla u_0|\mathbf{1}_{\Omega})(X)<\infty$ for almost every $X\in\Omega$. Taking limits as $\varepsilon\rightarrow 0$ in \eqref{cotaM2}, we obtain as desired \eqref{272'}.
\end{proof}

\begin{lemma}\label{proprepresentacotado}
Suppose that $\Omega\subset\re^{n+1}$ is a bounded open set such that $\partial\Omega$ satisfies the $\mathrm{AR}$ property. Let $L_0$ and $L_1$ be elliptic operators, and let $g\in H^{1/2}(\partial\Omega)\cap C_c(\partial\Omega)$. Consider the solutions $u_0$ and $u_1$ given by
$$
u_0(X)=\int_{\partial\Omega}g(y)\,d\omega_{L_0}^X(y),\qquad u_1(X)=\int_{\partial\Omega}g(y)\,d\omega_{L_1}^X(y),\qquad X\in\Omega.
$$
Then,
\begin{equation}\label{representacotado}
u_1(X)-u_0(X)=\iint_{\Omega}(A_0-A_1)(Y)\nabla_Y G_{L_1}(Y,X)\cdot\nabla u_0(Y)\,dY,\quad\text{for a.e. }X\in\Omega.
\end{equation}
\end{lemma}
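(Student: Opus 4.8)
The plan is to realise \eqref{representacotado} as the difference of two representation identities, one of which is Lemma \ref{lema272'}. First note that, $\Omega$ being bounded and $\partial\Omega$ being AR (hence satisfying the CDC, so that $\partial\Omega$ is Wiener regular), the Perron solutions $u_0,u_1$ lie in $C(\overline\Omega)\cap W^{1,2}(\Omega)$ and $u_0\rest{\partial\Omega}=u_1\rest{\partial\Omega}=g$ (they belong to $W^{1,2}(\Omega)$ since $g\in H^{1/2}(\partial\Omega)$ admits a $W^{1,2}$-extension). In particular $u_0$ is a weak solution of $L_0u_0=0$ in $\Omega$, so Lemma \ref{lema272'}, applied with this $L_0$ and $L_1$, gives
\[
\iint_\Omega A_0(Y)\,\nabla_YG_{L_1}(Y,X)\cdot\nabla u_0(Y)\,dY=0\qquad\text{for a.e. }X\in\Omega .
\]
Hence it suffices to establish
\[
u_1(X)-u_0(X)=-\iint_\Omega A_1(Y)\,\nabla_YG_{L_1}(Y,X)\cdot\nabla u_0(Y)\,dY\qquad\text{for a.e. }X\in\Omega ,
\]
for then subtracting the two displays yields \eqref{representacotado}.

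To get the second identity I would upgrade the representation formula \(\int_{\partial\Omega}\varphi\,d\omega_{L_1}^X-\varphi(X)=-\iint_\Omega A_1(Y)\nabla_YG_{L_1}(Y,X)\cdot\nabla\varphi(Y)\,dY\) from $\varphi\in C_c^\infty(\re^{n+1})$ to $\varphi=u_0$. Approximating $u_0$ (suitably extended) by $\varphi_j\in C_c^\infty(\re^{n+1})$ with $\varphi_j\to u_0$ uniformly on $\overline\Omega$ and $\nabla\varphi_j\to\nabla u_0$ in $L^2(\Omega)$ and letting $j\to\infty$, the left side converges to $\int_{\partial\Omega}u_0\,d\omega_{L_1}^X-u_0(X)=\int_{\partial\Omega}g\,d\omega_{L_1}^X-u_0(X)=u_1(X)-u_0(X)$ (using $\omega_{L_1}^X(\partial\Omega)=1$ and $u_0\rest{\partial\Omega}=g$). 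The right side is the crux: since $\nabla_YG_{L_1}(\cdot,X)\notin L^2(\Omega)$ near the pole $X$, plain $L^2$-convergence of $\nabla\varphi_j$ does not suffice. One splits $\Omega=B(X,r)\cup(\Omega\setminus B(X,r))$; the integral over $\Omega\setminus B(X,r)$ passes to the limit since $\nabla_YG_{L_1}(\cdot,X)\in L^2$ there, while over $B(X,r)$ a dyadic annular decomposition around $X$, together with $G_{L_1}(Y,X)\lesssim|Y-X|^{1-n}$ from \eqref{sizestimate} and the Caccioppoli bound $\big(\bariint_{|Y-X|\approx\rho}|\nabla_YG_{L_1}(Y,X)|^2\,dY\big)^{1/2}\lesssim\rho^{-n}$, controls the contribution uniformly in $j$ by $\lesssim r\,M_2\big(|\nabla u_0|\,\mathbf{1}_\Omega\big)(X)$ with $M_2f:=M(|f|^2)^{1/2}$; since $|\nabla u_0|\in L^2(\Omega)$ this maximal function is finite a.e., so letting $r\to0$ closes the argument and also shows the integral in \eqref{representacotado} converges absolutely. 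This is exactly the estimate carried out in the proof of Lemma \ref{lema272'}.

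The main obstacle is precisely this last step: making sense of and passing to the limit in the bulk integral $\iint_\Omega A_1\,\nabla_YG_{L_1}(\cdot,X)\cdot\nabla u_0$ despite the non-$L^2$ singularity of $\nabla_YG_{L_1}(\cdot,X)$ at $X$; the rest is bookkeeping (including checking that the chosen approximations of $u_0$ are admissible, which is where one uses $u_0-G\in W_0^{1,2}(\Omega)$ for a $W^{1,2}$-extension $G$ of $g$). An alternative that bypasses the boundary behaviour altogether is to work with $v:=u_1-u_0\in W_0^{1,2}(\Omega)$, which (using $L_0u_0=0=L_1u_1$) is a weak solution of $L_1v=\div\big((A_1-A_0)\nabla u_0\big)$, i.e.\ $\iint_\Omega A_1\nabla v\cdot\nabla\phi\,dY=\iint_\Omega(A_0-A_1)\nabla u_0\cdot\nabla\phi\,dY$ for every $\phi\in W_0^{1,2}(\Omega)$. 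Testing this with $\phi=\psi_\varepsilon G_{L_1}(\cdot,X)\in W_0^{1,2}(\Omega)$ --- where $\varphi_\varepsilon$ is a smooth bump equal to $1$ on $B(X,\varepsilon)$, supported in $B(X,2\varepsilon)$, with $|\nabla\varphi_\varepsilon|\lesssim\varepsilon^{-1}$, and $\psi_\varepsilon:=1-\varphi_\varepsilon$ (so $\psi_\varepsilon G_{L_1}(\cdot,X)\in W_0^{1,2}(\Omega)$ by \eqref{obsgreen2}) --- and using that $G_{L_1}(\cdot,X)$ solves $L_1w=0$ in $\Omega\setminus\{X\}$ and vanishes on $\partial\Omega$ (so it pairs to $0$ against $v\psi_\varepsilon$) together with $\iint_\Omega A_1\nabla_YG_{L_1}(Y,X)\cdot\nabla\varphi_\varepsilon(Y)\,dY=\varphi_\varepsilon(X)=1$, one lets $\varepsilon\to0$. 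The terms in which $\nabla$ falls on $\psi_\varepsilon$ (supported in $B(X,2\varepsilon)\setminus B(X,\varepsilon)$) vanish for a.e.\ $X$ by the same size bounds --- using in addition the continuity of $v$ on $\overline\Omega$ and a Caccioppoli estimate for $v$ on $B(X,\varepsilon)$ --- and the surviving terms leave $v(X)=\iint_\Omega(A_0-A_1)(Y)\,\nabla u_0(Y)\cdot\nabla_YG_{L_1}(Y,X)\,dY$, which is \eqref{representacotado} by the symmetry of $A_0-A_1$.
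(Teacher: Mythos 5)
Your second (``alternative'') argument is sound and is essentially the paper's proof in disguise: the paper also reduces everything to the observation that $u_1-u_0=v_0-v_1\in W_0^{1,2}(\Omega)$ (via the Jonsson--Wallin extension $\widetilde g$ and the Lax--Milgram solutions $v_0,v_1$), then invokes the Green-function representation $(u_1-u_0)(X)=\iint_\Omega A_1\nabla_YG_{L_1}(Y,X)\cdot\nabla(u_1-u_0)\,dY$ quoted from \cite{hofmartellgeneral}, and finally kills the two leftover bulk terms by two applications of Lemma \ref{lema272'}. What you do instead is rewrite $L_1(u_1-u_0)=\div((A_1-A_0)\nabla u_0)$ and re-derive that representation by hand, testing against $\psi_\varepsilon G_{L_1}(\cdot,X)$ and controlling the annular error terms exactly as in the proof of Lemma \ref{lema272'}; this folds the two uses of Lemma \ref{lema272'} into the weak formulation and is self-contained, at the cost of redoing the pole analysis that the paper outsources. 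The measure-theoretic point that the identity $\iint_\Omega A_1\nabla_YG_{L_1}(Y,X)\cdot\nabla\varphi_\varepsilon\,dY=\varphi_\varepsilon(X)$ is only asserted for a.e.\ $X$ (with the null set a priori depending on the test function) deserves a word, but it is the same issue the paper absorbs into ``for a.e.\ $X$'' and Remark \ref{modific}.

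Your first route, by contrast, has a genuine soft spot: to pass from $\varphi\in C_c^\infty(\re^{n+1})$ to $\varphi=u_0$ in the identity $\int_{\partial\Omega}\varphi\,d\omega_{L_1}^X-\varphi(X)=-\iint_\Omega A_1\nabla_YG_{L_1}(Y,X)\cdot\nabla\varphi\,dY$ you posit approximants $\varphi_j\in C_c^\infty(\re^{n+1})$ with $\varphi_j\to u_0$ uniformly on $\overline\Omega$ \emph{and} $\nabla\varphi_j\to\nabla u_0$ in $L^2(\Omega)$. For a general bounded open set with AR boundary such a simultaneous approximation is not available off the shelf (restrictions of smooth functions need not be dense in $W^{1,2}(\Omega)\cap C(\overline\Omega)$ in that joint topology), and the natural fix is precisely to split $u_0=\widetilde g-v_0$ and treat the $W_0^{1,2}$ part separately --- which lands you back on the second route. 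Since you supply that route in full, the proposal as a whole is correct.
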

\begin{proof}
Following \cite{hofmartellgeneral} we know that $u_0=\widetilde{g} - v_0$ and $u_1=\widetilde{g}-v_1$, where $\widetilde{g}=\mathcal{E}_{\partial\Omega}g\in W^{1,2}(\re^{n+1})$ is the Jonsson-Wallin extension (see \cite{jonsson1984function}), and $v_0,v_1\in W_0^{1,2}(\Omega)$ are the Lax-Milgram solutions of $L_0v_0=L_0\widetilde{g}$ and $L_1v_1=L_1\widetilde{g}$ respectively. Hence, we have that $u_1-u_0=v_0-v_1\in W_0^{1,2}(\Omega)$, and following again \cite{hofmartellgeneral} we obtain
$$
(u_1-u_0)(X)=\iint_{\Omega}A_1(Y)\nabla_YG_{L_1}(Y,X)\cdot\nabla (u_1-u_0)(Y)\,dY,\quad\text{for a.e. }X\in\Omega.
$$
For almost every $X\in\Omega$ we then have that
\begin{multline*}
(u_1-u_0)(X)-\iint_{\Omega}(A_0-A_1)(Y)\nabla_Y G_{L_1}(Y,X)\cdot\nabla u_0(Y)\,dY=
\\
=\iint_{\Omega}A_1(Y)\nabla_YG_{L_1}(Y,X)\cdot\nabla u_1(Y)\,dY
-\iint_{\Omega}A_0(Y)\nabla_Y G_{L_1}(Y,X)\cdot\nabla u_0(Y)\,dY.
\end{multline*}
Using Lemma \ref{lema272'} for both terms, the right side of the above equality vanishes almost everywhere, and this proves \eqref{representacotado}.
\end{proof}
\medskip

\begin{lemma}\label{proprepresent}
Suppose that $\Omega\subset\re^{n+1}$ is an open set such that $\partial\Omega$ satisfies the $\mathrm{AR}$ property. Let $L_0$, $L_1$ be elliptic operators such that $K:=\esssup(A_0-A_1)\cap\Omega$ is compact. For every $g\in H^{1/2}(\partial\Omega)\cap C_c(\partial\Omega)$, let
$$
u_0(X)=\int_{\partial\Omega}g(y)\,d\omega_{L_0}^X(y),\qquad u_1(X)=\int_{\partial\Omega}g(y)\,d\omega_{L_1}^X(y),\qquad X\in\Omega.
$$
Then, for almost every $X\in\Omega\setminus K$, there holds
\begin{equation}\label{represent}
u_1(X)-u_0(X)=\iint_{\Omega}(A_0-A_1)(Y)\nabla_Y G_{L_1}(Y,X)\cdot\nabla u_0(Y)\,dY.
\end{equation}
\end{lemma}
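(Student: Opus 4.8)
The plan is to reduce matters to the bounded case already settled in Lemma~\ref{proprepresentacotado}, via an exhaustion of $\Omega$ by bounded subdomains. If $\Omega$ is itself bounded there is nothing new, since then Lemma~\ref{proprepresentacotado} applies directly and yields \eqref{represent} for a.e.\ $X\in\Omega$, in particular for a.e.\ $X\in\Omega\setminus K$. So suppose $\Omega$ is unbounded; translating, we may assume the origin lies on $\partial\Omega$. Following the construction of $\omega_{L,\Omega}$ and $G_{L,\Omega}$ for unbounded open sets in \cite{hofmartellgeneral}, fix an exhaustion of $\Omega$ by bounded open sets $\Omega_j$ with uniformly $\mathrm{AR}$ boundaries (e.g.\ suitable truncations $\Omega_j=\Omega\cap B(0,R_j)$ with $R_j\uparrow\infty$), so that $\Omega_j\nearrow\Omega$ and, for each fixed $X$, $\omega_{L_i,\Omega_j}^X\to\omega_{L_i,\Omega}^X$ and $G_{L_i,\Omega_j}(\cdot,X)\to G_{L_i,\Omega}(\cdot,X)$ in the appropriate senses ($i=0,1$). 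Fix $j$ large enough that $K\subset\Omega_j$ and $\supp g\Subset B(0,R_j)$.

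Next I would transfer the datum $g$ to $\partial\Omega_j$, setting $g_j:=g$ on $\partial\Omega\cap\overline{B(0,R_j)}$ and $g_j:=0$ on $\partial B(0,R_j)\cap\overline\Omega$. Since $g$ vanishes near $\partial\Omega\cap\partial B(0,R_j)$ we get $g_j\in C_c(\partial\Omega_j)$, and splitting the Gagliardo seminorm of $g_j$ according to whether both, one, or neither of the two variables lies in $\supp g$ — and using the $\mathrm{AR}$ property of $\partial\Omega$ and of $\partial\Omega_j$ together with $\supp g\Subset B(0,R_j)$ — one checks that $g_j\in H^{1/2}(\partial\Omega_j)$. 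Put $u_i^j(X):=\int_{\partial\Omega_j}g_j\,d\omega_{L_i,\Omega_j}^X$. Applying Lemma~\ref{proprepresentacotado} in $\Omega_j$ to $L_0$, $L_1$ (whose disagreement is still compactly supported, now in $K\subset\Omega_j$) gives, for a.e.\ $X\in\Omega_j$,
\begin{equation}\label{eq:plan-exhaust}
u_1^j(X)-u_0^j(X)=\iint_{\Omega_j}(A_0-A_1)(Y)\,\nabla_YG_{L_1,\Omega_j}(Y,X)\cdot\nabla u_0^j(Y)\,dY,
\end{equation}
the integrand being supported in $K$ since $A_0-A_1$ vanishes off $K$.

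Finally I would send $j\to\infty$ in \eqref{eq:plan-exhaust}. On the left, $u_i^j\to u_i$ pointwise on $\Omega$ (this is the consistency of $\omega_{L_i,\Omega}$ with its bounded approximants recorded in \cite{hofmartellgeneral}), so the left side tends to $u_1(X)-u_0(X)$. On the right, fix $X\in\Omega\setminus K$ and a compact set $K'$ with $K\subset\interior(K')$ and $K'\subset\Omega\setminus\{X\}$. For $j$ large the $G_{L_1,\Omega_j}(\cdot,X)$ solve $L_1u=0$ in a neighborhood of $K'$ (here the symmetry of $L_1$ enters), are uniformly bounded there by the size estimate \eqref{sizestimate}, and converge locally uniformly to $G_{L_1,\Omega}(\cdot,X)$; Caccioppoli's inequality applied to their pairwise differences then yields $\nabla_YG_{L_1,\Omega_j}(\cdot,X)\to\nabla_YG_{L_1,\Omega}(\cdot,X)$ strongly in $L^2(K)$. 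Likewise $u_0^j\to u_0$ locally uniformly (uniform $L^\infty$ bound from the maximum principle, interior De Giorgi--Nash--Moser estimates, and the pointwise limit identifying $u_0$), so $\nabla u_0^j\rightharpoonup\nabla u_0$ weakly in $L^2(K)$ by Caccioppoli. Since $A_0-A_1\in L^\infty$, the right side of \eqref{eq:plan-exhaust} is a pairing on $L^2(K)$ of a strongly convergent factor with a weakly convergent one, hence converges to $\iint_K(A_0-A_1)\nabla_YG_{L_1,\Omega}(Y,X)\cdot\nabla u_0(Y)\,dY=\iint_\Omega(A_0-A_1)\nabla_YG_{L_1}(Y,X)\cdot\nabla u_0(Y)\,dY$, giving \eqref{represent}. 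The two delicate points are: (i) exhibiting the $\Omega_j$ with uniformly $\mathrm{AR}$ boundaries and the attendant convergence of elliptic measures, Green functions and solutions — which is exactly the machinery behind the unbounded construction in \cite{hofmartellgeneral}; and (ii) the strong $L^2_{\mathrm{loc}}$ convergence of $\nabla_YG_{L_1,\Omega_j}(\cdot,X)$ near $K$. It is precisely keeping $X$ away from $K$ that turns the $G_{L_1,\Omega_j}(\cdot,X)$ into bona fide solutions in a neighborhood of $K$, which is why the conclusion is only stated for $X\in\Omega\setminus K$.
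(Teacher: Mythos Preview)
Your proposal is correct and follows essentially the same strategy as the paper: exhaust $\Omega$ by the bounded approximants from \cite{hofmartellgeneral}, apply Lemma~\ref{proprepresentacotado} in each, and pass to the limit using local uniform convergence of Green functions and solutions together with Caccioppoli. The only implementation differences are cosmetic: the paper transfers the datum via the Jonsson--Wallin extension $h=\mathcal{E}_{\partial\Omega}g\in W^{1,2}(\re^{n+1})$ (whose trace on $\partial\mathcal{T}_k$ automatically lies in $H^{1/2}(\partial\mathcal{T}_k)$ and coincides with $g\mathbf{1}_{\partial\Omega}$ for large $k$), whereas you extend by zero and verify the Gagliardo seminorm directly; and in the limit the paper obtains \emph{strong} $W^{1,2}_{\rm loc}$ convergence for both $\nabla u_0^k$ and $\nabla G_{L_1,\mathcal{T}_k}(\cdot,X_0)$, while you pair a strongly convergent factor with a weakly convergent one---either works.
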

\begin{proof}
First, fix $x_0\in\partial\Omega$, following \cite{hofmartellgeneral} we consider the family of bounded increasing open subsets $\{\mathcal{T}_k\}_{k\in\mathbb{Z}}$ such that $\Omega=\bigcup_{k\in\mathbb{Z}}\mathcal{T}_k$, and $\partial\mathcal{T}_k$ satisfies the $\mathrm{AR}$ property, with constants possibly depending on $k$ and $\diam(\partial\Omega)$ (see \cite{hofmartellgeneral}). As we can see in \cite{jonsson1984function}, there exists an extension operator $\mathcal{E}_{\partial\Omega}$, which maps $H^{1/2}(\partial\Omega)$ continuously into $W^{1/2}(\re^{n+1})$, and a restriction operator $\mathfrak{R}_{\partial\Omega}$, which is bounded from $W^{1,2}(\re^{n+1})$ to $H^{1/2}(\partial\Omega)$, such that $\mathfrak{R}_{\partial\Omega}\circ\mathcal{E}_{\partial\Omega}=Id$ in $H^{1/2}(\partial\Omega)$. Moreover, we have that $\mathcal{E}_{\partial\Omega}f\in C_c(\re^{n+1})\cap L^{\infty}(\re^{n+1})$ for every $f\in H^{1/2}(\partial\Omega)\cap C_c(\partial\Omega)$.
Let $g\in H^{1/2}(\partial\Omega)\cap C_c(\partial\Omega)$ and $h=\mathcal{E}_{\partial\Omega}g\in W^{1,2}(\re^{n+1})\cap C_c(\re^{n+1})\cap L^{\infty}(\re^{n+1})$. Let $\eta\in C_c^{\infty}([-2,2])$ be such that $0\leq\eta\leq 1$, $\eta\equiv 1$ in $[-1,1]$, $\eta$ monotonously decreasing in $(1,2)$ and monotonously increasing in $(-2,-1)$. Let us consider $h_k(y)=h(y)\eta(|y-x_0|/2^k)$, as well as the solutions
$$
u_0^k(X)=\int_{\partial\mathcal{T}_k}h_k(y)\,d\omega_{L_0,\mathcal{T}_k}^X(y),\quad u_1^k(X)=\int_{\partial\mathcal{T}_k}h_k(y)\,d\omega_{L_1,\mathcal{T}_k}^X(y),\qquad X\in\mathcal{T}_k.
$$
We take $k_0\gg 1$ such that $\supp(g),\supp(h)\subset B(x_0,2^{k_0-1})$, in such a way that $h_k\equiv h$ for $k\geq k_0$. Note that by \cite{hofmartellgeneral}, $B(x_0,2^k)\cap\Omega\subset\mathcal{T}_k$, hence $h=g\mathbf{1}_{\partial\Omega}$ on $\partial\mathcal{T}_k$, and consequently $h\in H^{1/2}(\partial\mathcal{T}_k)\cap C_c(\partial\mathcal{T}_k)$ for $k\geq k_0$. Using Lemma \ref{proprepresentacotado}, we have that
\begin{equation}\label{u1ku0k}
(u_1^k-u_0^k)(X)=\iint_{\mathcal{T}_k}(A_0-A_1)(Y)\nabla_Y G_{L_1,\mathcal{T}_k}(Y,X)\cdot\nabla u_0^k(Y)\,dY,\qquad k\geq k_0,
\end{equation}
for almost every $X\in\mathcal{T}_k$. Let $\mathcal{G}_k$ be the set of points $X\in\mathcal{T}_k$ for which \eqref{u1ku0k} holds, and let $\mathcal{B}_k=\mathcal{T}_k\setminus\mathcal{G}_k$. We fix $X_0\in(\Omega\setminus K)\setminus\bigcup_{k\geq k_0}\mathcal{B}_k$ and take $k_0$ (possibly greater than before) such that $X_0\in B(x_0,2^{k_0-1})\cap\Omega \subset\mathcal{T}_k$ and $K\subset B(x_0,2^{k_0-1})\cap\Omega\subset \mathcal{T}_k$. Let us consider $v_k=G_{L_1,\mathcal{T}_k}(\cdot,X_0)$, which converge to $v=G_{L_1}(\cdot,X_0)$ uniformly on compacta in $\Omega\setminus\{X_0\}$ (see \cite{hofmartellgeneral}), and hence on $W^{1,2}_{\rm loc}(\Omega\setminus\{X_0\})$ by Caccioppoli's inequality. Also, note that for $i=0,1$, we have that $u_i^k\rightarrow u_i$ uniformly on compacta in $\Omega$ (see \cite{hofmartellgeneral}). In particular, Caccioppoli's inequality yields $u_0^k\rightarrow u_0$ in $W^{1,2}_{\rm loc}(\Omega)$. Thanks to these observations, using \eqref{u1ku0k} we obtain
\begin{multline*}
\bigg|(u_1^k-u_0^k)(X_0)-\iint_{\Omega}(A_0-A_1)(Y)\nabla_Y G_{L_1}(Y,X_0)\cdot\nabla u_0(Y)\,dY\bigg|
\\
\leq
\iint_K\big|(A_0-A_1)(Y)\big|\big|\nabla v_k(Y)\cdot\nabla u_0^k(Y)-\nabla v(Y)\cdot\nabla u_0(Y)\big|\,dY
\\
\lesssim\|\nabla v_k\|_{L^2(K)}\|\nabla u_0^k-\nabla u_0\|_{L^{2}(K)}+\|\nabla v_k-\nabla v\|_{L^2(K)}\|\nabla u_0\|_{L^2(K)}.
\end{multline*}
Taking limits as $k\to\infty$, \eqref{represent} is then proved.
\end{proof}

\begin{remark}\label{modific}
Note that
Lemma \ref{proprepresentacotado} ensures that there exists $\mathcal{G}\subset\Omega$ with $|\Omega\setminus\mathcal{G}|=0$ such that \eqref{representacotado} holds for all $X\in\mathcal{G}$. Let $\Delta=\Delta(x,r)$ with $x\in\partial\Omega$ and $0<r<\diam(\partial\Omega)$ be such that $X_\Delta\notin\mathcal{G}$. Take $\widetilde{X}_\Delta\in B(X_\Delta,cr/2)\cap\mathcal{G}$ where $0<c<1$ is the corkscrew constant. Taking into account that $B(\widetilde{X}_\Delta,cr/2)\subset B(X_\Delta,cr)$ and slightly modifying the constants, we can use $\widetilde{X}_\Delta$ as a corkscrew point associated with $\Delta$. Hence, we may assume that for every $\Delta$ as before, there exists a corkscrew point $X_\Delta\in\mathcal{G}$ for which \eqref{representacotado} holds with $X=X_\Delta$. Similarly, we may also assume that \eqref{represent} holds for $X_\Delta$, as long as $X_\Delta\notin K$. In particular, for every $Q\in\mathbb{D}(\partial\Omega)$, we can choose $X_Q$ so that \eqref{representacotado} and \eqref{represent} hold with $X=X_Q$ (the latter provided $X_Q\notin K$).
\end{remark}

\begin{lemma}\label{comp3}
Suppose that $\Omega\subset\re^{n+1}$ is a 1-sided $\mathrm{CAD}$. Fix $Q_0\in\mathbb{D}(\partial\Omega)$, let $L_1$ and $L_2$ be elliptic operators such that $\omega_{L_1}\ll\sigma$, $\omega_{L_2}\ll\sigma$, and $L_1\equiv L_2$ in $T_{Q_0}$. Given $0<\tau<1$, there exists $C_\tau>1$ such that
$$
\frac{1}{C_\tau}k_{L_2}^{X_{Q_0}}(y)\leq k_{L_1}^{X_{Q_0}}(y)\leq C_\tau k_{L_2}^{X_{Q_0}}(y),\quad\text{ for }\sigma\text{-a.e. }y\in Q_0\setminus\Sigma_{Q_0,\tau},
$$
where $\Sigma_{Q_0,\tau}$ is the region defined by $\Sigma_{Q_0,\tau}=\big\{x\in Q_0:\:\dist(x,\partial\Omega\setminus Q_0)<\tau\ell(Q_0)\big\}$.
\end{lemma}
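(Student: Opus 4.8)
The statement compares the Poisson kernels of two operators that agree on the Carleson box $T_{Q_0}$, away from a thin neighborhood of the "boundary part" $\partial\Omega\setminus Q_0$ of $T_{Q_0}$. The natural strategy is to pass to the sawtooth domain $\Omega_{Q_0}:=T_{Q_0}$ (or to a slightly fattened version $T_{Q_0}^{*}$), exploit that on this subdomain the two operators are literally the same, and then transfer back the comparison to $\Omega$ using the change-of-domain estimates already available. Concretely, I would proceed as follows.

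First, fix a surface ball $\Delta$ well inside $Q_0$, say a small dyadic subcube $Q\in\mathbb{D}_{Q_0}$ with $Q\cap\Sigma_{Q_0,\tau}=\emptyset$, so that $2\kappa_0 B_Q$ stays away from $\partial\Omega\setminus Q_0$ and hence $2\kappa_0 B_Q\cap\Omega\subset T_{Q_0}$ (using the containments in \eqref{definicionkappa12} and the fact that $\mathrm{dist}(Q,\partial\Omega\setminus Q_0)\gtrsim\tau\ell(Q_0)$). Since $L_1\equiv L_2$ on $T_{Q_0}\supset 2\kappa_0 B_Q\cap\Omega$, Lemma~\hyperref[proppde]{\ref*{proppde}(e)} (applied with $B_0=B_Q$, whose constant $\kappa_0$ is the one in \eqref{definicionkappa0}) gives
$$
C^{-1}k_{L_2}^{X_Q}(y)\le k_{L_1}^{X_Q}(y)\le C\,k_{L_2}^{X_Q}(y),\qquad\text{for }\sigma\text{-a.e. }y\in Q,
$$
with $C$ depending only on the allowable parameters and the ellipticity of $L_1,L_2$. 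This is the comparison at the "local" scale $Q$; it is uniform over all such $Q$.

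Second, I would change the pole from $X_Q$ back to $X_{Q_0}$. By Remark~\ref{dyadicchangepole} (a consequence of Lemma~\hyperref[proppde]{\ref*{proppde}(c)}), applied separately to $L_1$ and to $L_2$, one has $k_{L_i}^{X_Q}(y)\approx k_{L_i}^{X_{Q_0}}(y)/\omega_{L_i}^{X_{Q_0}}(Q)$ for $\sigma$-a.e. $y\in Q$. Combining this with the previous display reduces the desired conclusion to controlling the ratio $\omega_{L_1}^{X_{Q_0}}(Q)/\omega_{L_2}^{X_{Q_0}}(Q)$ from above and below, uniformly over the relevant cubes $Q$. But $\omega_{L_i}^{X_{Q_0}}(Q)=\int_Q k_{L_i}^{X_{Q_0}}\,d\sigma$, and one can realize $Q=\bigcup_j Q_j$ as a countable disjoint union of subcubes each disjoint from $\Sigma_{Q_0,\tau'}$ for a suitable $\tau'$, so that the per-cube kernel comparison already in hand self-improves to the measure comparison $\omega_{L_1}^{X_{Q_0}}(Q)\approx\omega_{L_2}^{X_{Q_0}}(Q)$ directly on $Q_0\setminus\Sigma_{Q_0,\tau}$. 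Feeding this back in yields $k_{L_1}^{X_{Q_0}}(y)\approx k_{L_2}^{X_{Q_0}}(y)$ for $\sigma$-a.e. $y\in Q_0\setminus\Sigma_{Q_0,\tau}$, which is the claim, with $C_\tau$ depending on $\tau$ through the geometric constants controlling how deep $Q_0\setminus\Sigma_{Q_0,\tau}$ sits inside $Q_0$.

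**Main obstacle.** The delicate point is purely geometric bookkeeping: one must verify that every point $y\in Q_0\setminus\Sigma_{Q_0,\tau}$ lies in a dyadic cube $Q\in\mathbb{D}_{Q_0}$ with $\ell(Q)\approx_\tau\ell(Q_0)$ whose enlargement $2\kappa_0 B_Q\cap\Omega$ is contained in $T_{Q_0}$ (equivalently, does not reach the "lateral" boundary $\partial\Omega\setminus Q_0$ where $L_1$ and $L_2$ may differ), and to track how the constant $C_\tau$ blows up as $\tau\to 0$. This is where the containments \eqref{definicionkappa12}--\eqref{propQ0} and property $(d)$ of Lemma~\ref{dyadiccubes} (the cube diameters are comparable to $\ell(Q)$) are used, together with the corkscrew point estimate $\delta(X_Q)\approx\mathrm{dist}(X_Q,Q)\approx\ell(Q)$, to guarantee that the ball $2\kappa_0 B_Q$ on which one invokes Lemma~\hyperref[proppde]{\ref*{proppde}(e)} stays inside $T_{Q_0}$ whenever $\ell(Q)\lesssim\tau\ell(Q_0)$; choosing $k$ with $2^{-k}\approx\tau$ and taking $Q\in\mathbb{D}_{Q_0}$ of that generation containing $y$ does the job. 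Everything else — the kernel comparison at scale $Q$, the change of pole, and reassembling — is a direct application of the already-quoted lemmas and Harnack's inequality.
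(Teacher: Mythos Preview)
Your overall strategy matches the paper's: cover $Q_0\setminus\Sigma_{Q_0,\tau}$ by cubes (the paper uses surface balls) of scale $\approx\tau\ell(Q_0)/M$, check that a $2\kappa_0$-dilate of each stays inside $T_{Q_0}$, apply Lemma~\ref{proppde}(e) to get the local kernel comparison, and then move the pole back to $X_{Q_0}$. (A minor remark: the inclusion $2\kappa_0 B_Q\cap\Omega\subset T_{Q_0}$ is not a consequence of \eqref{definicionkappa12}, which goes the other way; the paper proves the analogous inclusion by a short Whitney-cube argument.)

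There is, however, a circularity in your pole-change step. After invoking Remark~\ref{dyadicchangepole} you are left with the unknown ratio $\omega_{L_1}^{X_{Q_0}}(Q)/\omega_{L_2}^{X_{Q_0}}(Q)$, and your proposal to control it by writing $Q=\bigcup_j Q_j$ and using ``the per-cube kernel comparison already in hand'' does not close: what you have in hand is $k_{L_1}^{X_{Q'}}\approx k_{L_2}^{X_{Q'}}$ at the \emph{local} pole $X_{Q'}$, not at $X_{Q_0}$, so integrating it tells you nothing about $\omega_{L_i}^{X_{Q_0}}(Q_j)$. Pushing the pole of each $Q_j$ up to $X_{Q_0}$ via Remark~\ref{dyadicchangepole} again just reproduces the same unknown ratio at the smaller scale, and the bootstrap never terminates.

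The fix, which is exactly what the paper does, is to move the pole by Harnack's inequality \emph{directly} rather than through Remark~\ref{dyadicchangepole}. Since you chose $\ell(Q)\approx_\tau\ell(Q_0)$, one has $\delta(X_Q)\approx_\tau\ell(Q_0)$, $\delta(X_{Q_0})\approx\ell(Q_0)$, and $|X_Q-X_{Q_0}|\lesssim\ell(Q_0)$; the Harnack chain condition then gives $k_{L_i}^{X_{Q_0}}(y)\approx_\tau k_{L_i}^{X_Q}(y)$ for $\sigma$-a.e.\ $y$ and each $i=1,2$, with a $\tau$-dependent constant but \emph{no appearance of} $\omega_{L_i}^{X_{Q_0}}(Q)$. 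Combined with the local comparison from Lemma~\ref{proppde}(e), this yields the conclusion immediately.
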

\begin{proof}
Let $r=\tau\ell(Q_0)/M$ with $M> 1$ to be chosen. Using a Vitali-type covering argument, we construct a maximal collection of points $\{x_j\}_{j\in\mathcal{J}}\subset Q_0\setminus\Sigma_{Q_0,\tau}$ with respect to the property that $|x_j-x_k|>2r/3$ for every $j,k\in\mathcal{J}$, and a disjoint family $\{\Delta'_j\}_{j\in\mathcal{J}}$ given by $\Delta'_j=\Delta(x_j,r/3)$, in such a way that $Q_0\setminus\Sigma_{Q_0,\tau}\subset\bigcup_{j\in\mathcal{J}}3\Delta'_j$. Note that there exists $C$, depending only on dimension and on the 1-sided $\mathrm{CAD}$ constants, such that $\Delta_j'\subset\Delta(x_{Q_0},C\ell(Q_0))$ for every $j\in\mathcal{J}$. Hence,
$$
\#\mathcal{J}\Big(\frac{\tau\ell(Q_0)}{M}\Big)^n\approx\sum_{j\in\mathcal{J}}\sigma(\Delta'_j)=\sigma\Big(\bigcup_{j\in\mathcal{J}}\Delta'_j\Big)\leq
\sigma(\Delta(x_{Q_0},C\ell(Q_0)))\approx \ell(Q_0)^n.
$$
We have then obtained a covering $\{\Delta_j\}_{j=1}^{N_\tau}$ of $Q_0\setminus\Sigma_{Q_0,\tau}$ by balls $\Delta_j=\Delta(x_j,r)$ with $x_j\in Q_0\setminus\Sigma_{Q_0,\tau}$, $r=\tau\ell(Q_0)/M$ and $N_\tau\lesssim(M/\tau)^n$. We claim that for $M\gg 1$ we have $B_j^*\cap\Omega\subset T_{Q_0}$, with $B_j^*:=B_{\Delta_j}^*=B(x_j,2\kappa_0r)$ and $\kappa_0$  as in \eqref{definicionkappa0}. Let $Y\in B_j^*\cap\Omega$ and $I\in\mathcal{W}$ be such that $Y\in I$. Take $y_j\in\partial\Omega$ such that $\dist(I,\partial\Omega)=\dist(I,y_j)$ and pick $Q_j\in\mathbb{D}(\partial\Omega)$ the unique cube such that $y_j\in Q_j$ and $\ell(Q_j)=\ell(I)$. As  already observed, $I\in\mathcal{W}_{Q_j}^*$. We are going to see that $Q_j\in\mathbb{D}_{Q_0}$. First of all, note that
$$
\ell(Q_j)=\ell(I)\approx\dist(I,\partial\Omega)\leq|x_j-Y|<2\kappa_0\tau\ell(Q_0)/M<2\kappa_0\ell(Q_0)/M.
$$
Choosing $M\gg 1$ sufficiently large (independent of $\tau$) we may obtain $\ell(Q_j)<\ell(Q_0)/4$ and $\dist(I,\partial\Omega)\leq|x_j-Y|<\tau\ell(Q_0)/4$. Also, since $x_j\in Q_0\setminus\Sigma_{Q_0,\tau}$, we can write by \eqref{constwhitney}
\begin{multline*}
\tau\ell(Q_0)
\leq\dist(x_j,\partial\Omega\setminus Q_0)\leq
|x_j-Y|+\diam(I)+\dist(I,y_j)+\dist(y_j,\partial\Omega\setminus Q_0)
\\
\leq
\tfrac{1}{4}\tau\ell(Q_0)+\tfrac{5}{4}\dist(I,\partial\Omega)+\dist(y_j,\partial\Omega\setminus Q_0)\leq \tfrac{9}{16}\tau\ell(Q_0)+\dist(y_j,\partial\Omega\setminus Q_0),
\end{multline*}
and hence $y_j\in \interior(Q_0)$. Since $y_j\in Q_0\cap Q_j$ and $\ell(Q_j)<\ell(Q_0)/4$ it follows that $Q_j\in\mathbb{D}_{Q_0}$. This and the fact that $Y\in I\in \mathcal{W}_{Q_j}^*$ allow us to conclude that $Y\in T_{Q_0}$. Consequently, we have shown that $B_j^*\cap\Omega\subset T_{Q_0}$ and thus $L_1\equiv L_2$ in $B_j^*\cap\Omega$ for every $j=1,\dots, N_\tau$.

Next, we note that $\delta(X_{Q_0})\approx\ell(Q_0)\geq\tau\ell(Q_0)$, $\delta(X_{\Delta_j})\approx\tau\ell(Q_0)$, and $|X_{Q_0}-X_{\Delta_j}|\lesssim\ell(Q_0)$. Hence, we can use Harnack's inequality to move from $X_{Q_0}$ to $X_{\Delta_j}$ with constants depending on $\tau$, and Lemma \hyperref[proppde]{\ref*{proppde}(e)}, we obtain
$$
k_{L_1}^{X_{Q_0}}(y)\approx_{\tau}k_{L_1}^{X_{\Delta_j}}(y)\approx k_{L_2}^{X_{\Delta_j}}(y)\approx_{\tau}k_{L_2}^{X_{Q_0}}(y)
$$
for $\sigma$-almost every $y\in\Delta_j=B_j\cap\partial\Omega$. Since we know that $\{\Delta_j\}_{j=1}^{N_\tau}$ covers $Q_0\setminus\Sigma_{Q_0,\tau}$, the desired conclusion follows.
\end{proof}

\medskip

We will prove Theorem \hyperref[perturbationtheo]{\ref*{perturbationtheo}(a)} with the help of Lemma \ref{extrapolation}. In this way we consider the measure $\mathfrak{m}=\{\gamma_Q\}_{Q\in\mathbb{D}(\partial\Omega)}$, where
\begin{equation}\label{coefcarleson}
\gamma_{Q}:=\sum_{I\in\mathcal{W}_{Q}^*}\frac{\sup_{I^*}|\mathcal{E}|^2}{\ell(I)}|I|,\qquad Q\in\mathbb{D}(\partial\Omega),
\end{equation}
and $\mathcal{E}(Y)=A(Y)-A_0(Y)$. We are going to show that $\mathfrak{m}$ is indeed a discrete Carleson measure with respect to $\sigma$.

\begin{lemma}\label{carlesondisc}
Suppose that $\Omega\subset\re^{n+1}$ is a 1-sided $\mathrm{CAD}$, let $L_0$ and $L$ be elliptic operators whose disagreement in $\Omega$ is given by the function $a:=\varrho(A,A_0)$ defined in \eqref{discrepancia}, and suppose that $\vertiii{a}<\infty$, see \eqref{eq:defi-vertiii}. Then, there exists $\kappa>0$ (depending only on dimension and the 1-sided $\mathrm{CAD}$ constants) such that for every $Q_0\in\mathbb{D}(\partial\Omega)$ with $\ell(Q_0)<\diam(\partial\Omega)/\kappa_0$ (see \eqref{definicionkappa12}), the collection $\mathfrak{m}=\{\gamma_{Q}\}_{Q\in\mathbb{D}(\partial\Omega)}$ given by \eqref{coefcarleson}
defines a discrete Carleson measure $\mathfrak{m}\in\mathcal{C}(Q_0)$ with $\|\mathfrak{m}\|_{\mathcal{C}(Q_0)}\leq\kappa \vertiii{a}$.
\end{lemma}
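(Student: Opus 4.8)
The plan is to reduce the asserted Carleson bound to a single volume integral already controlled by $\vertiii{a}$, by interchanging the order of summation and exploiting the geometry of the Whitney regions $\mathcal{W}_Q^*$. Since $\|\mathfrak{m}\|_{\mathcal{C}(Q_0)}=\sup_{Q'\in\mathbb{D}_{Q_0}}\mathfrak{m}(\mathbb{D}_{Q'})/\sigma(Q')$ and every $Q'\in\mathbb{D}_{Q_0}$ again satisfies $\ell(Q')\le\ell(Q_0)<\diam(\partial\Omega)/\kappa_0$, it suffices to prove
\[
\mathfrak{m}(\mathbb{D}_{Q_0})\leq\kappa\,\vertiii{a}\,\sigma(Q_0)
\]
for an arbitrary such $Q_0$ and then apply it with $Q'$ in place of $Q_0$.

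First I would unwind \eqref{coefcarleson} and swap the two sums. Setting $\mathcal{W}_{Q_0}:=\bigcup_{Q\in\mathbb{D}_{Q_0}}\mathcal{W}_Q^*$ and $N_I:=\#\{Q\in\mathbb{D}_{Q_0}:\,I\in\mathcal{W}_Q^*\}$, one gets
\[
\mathfrak{m}(\mathbb{D}_{Q_0})=\sum_{Q\in\mathbb{D}_{Q_0}}\sum_{I\in\mathcal{W}_Q^*}\frac{\sup_{I^*}|\mathcal{E}|^2}{\ell(I)}\,|I|=\sum_{I\in\mathcal{W}_{Q_0}}\frac{\sup_{I^*}|\mathcal{E}|^2}{\ell(I)}\,|I|\;N_I.
\]
The key geometric input is that $N_I$ is bounded by a constant depending only on allowable parameters: if $I\in\mathcal{W}_Q^*$ then $k(Q)-k^*\le k_I\le k(Q)+k^*$ pins $\ell(Q)$ to a bounded range of scales, and $\dist(I,Q)\le K_02^{-k(Q)}$ together with Ahlfors regularity bounds, at each such scale, the number of cubes $Q$ within distance $K_0\ell(Q)$ of $I$. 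Hence $\mathfrak{m}(\mathbb{D}_{Q_0})\lesssim\sum_{I\in\mathcal{W}_{Q_0}}\ell(I)^{-1}\,\sup_{I^*}|\mathcal{E}|^2\,|I|$.

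The second ingredient, and the only other place needing care, is the pointwise comparison of $\sup_{I^*}|\mathcal{E}|$ with $a=\varrho(A,A_0)$. For $I\in\mathcal{W}(\Omega)$ and any $X\in I$, the Whitney conditions \eqref{constwhitney} give $4\diam(I)\le\delta(X)\le41\diam(I)$, so $\ell(I)\approx\delta(X)$ on $I$; moreover every $Y\in I^*$ satisfies $|Y-X|\le\diam(I^*)=(1+\lambda)\diam(I)\le2\diam(I)\le\tfrac12\delta(X)$, so $I^*\subset B(X,\delta(X)/2)$ and therefore $\sup_{I^*}|\mathcal{E}|\le a(X)$ for every $X\in I$. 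Integrating over $I$ and using $\ell(I)^{-1}\approx\delta(X)^{-1}$ there,
\[
\frac{\sup_{I^*}|\mathcal{E}|^2}{\ell(I)}\,|I|\leq\frac{1}{\ell(I)}\iint_I a(X)^2\,dX\lesssim\iint_I\frac{a(X)^2}{\delta(X)}\,dX.
\]

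Finally I would sum over $I\in\mathcal{W}_{Q_0}$, using that distinct Whitney cubes are pairwise non-overlapping and the inclusion $\bigcup_{I\in\mathcal{W}_{Q_0}}I\subset\overline{T_{Q_0}}\subset\kappa_0B_{Q_0}\cap\overline{\Omega}$ from \eqref{definicionkappa12}. Writing $B_{Q_0}=B(x_{Q_0},r_{Q_0})$ with $x_{Q_0}\in\partial\Omega$ and $r_{Q_0}\le\ell(Q_0)$, the hypothesis $\ell(Q_0)<\diam(\partial\Omega)/\kappa_0$ ensures $0<\kappa_0r_{Q_0}<\diam(\partial\Omega)$, so the definition \eqref{eq:defi-vertiii} of $\vertiii{a}$ applies to the ball $B(x_{Q_0},\kappa_0r_{Q_0})$. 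Combining everything,
\[
\mathfrak{m}(\mathbb{D}_{Q_0})\lesssim\iint_{\kappa_0B_{Q_0}\cap\Omega}\frac{a(X)^2}{\delta(X)}\,dX\leq\vertiii{a}\,\sigma\big(B(x_{Q_0},\kappa_0r_{Q_0})\cap\partial\Omega\big)\lesssim\vertiii{a}\,\ell(Q_0)^n\approx\vertiii{a}\,\sigma(Q_0),
\]
where the last two estimates are Ahlfors regularity and $\sigma(Q_0)\approx\ell(Q_0)^n$. All implicit constants depend only on dimension and the 1-sided $\mathrm{CAD}$ constants (and on the fixed Whitney dilation parameter $\lambda$, itself determined by those), so $\kappa$ has the stated dependence; in particular the ellipticity constants of $L_0$ and $L$ are irrelevant here. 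The main obstacle, such as it is, is precisely the geometric bookkeeping in the two middle steps — the bounded-overlap count for $\{\mathcal{W}_Q^*\}$ and the inclusion $I^*\subset B(X,\delta(X)/2)$ — both of which are routine given the Whitney construction recalled above.
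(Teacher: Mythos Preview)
Your proof is correct and follows essentially the same approach as the paper: both use the inclusion $I^*\subset B(X,\delta(X)/2)$ to pass from $\sup_{I^*}|\mathcal{E}|$ to $a$, a bounded-overlap argument (you phrase it as $N_I\lesssim 1$, the paper as bounded overlap of $\{U_{Q'}\}$, which are equivalent), and then the containment $T_{Q_0}\subset\kappa_0 B_{Q_0}$ together with Ahlfors regularity to invoke $\vertiii{a}$. The only organizational difference is that you swap the double sum explicitly before integrating, while the paper integrates first and then invokes bounded overlap; neither gains anything over the other.
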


\begin{proof}
Let $Q_0\in\mathbb{D}(\partial\Omega)$ with $\ell(Q_0)<\diam(\partial\Omega)/\kappa_0$. First, note that for every $I\in\mathcal{W}$ and every $Y\in I$ we have that $\sup_{I^*}|\mathcal{E}|\leq a(Y)$. Indeed, since $4\diam(I)\leq\dist(I,\partial\Omega)$ (see \eqref{constwhitney}), we know that $I^*\subset\{X\in\Omega:\,|X-Y|<\delta(Y)/2\}$. Given $Q\in\mathbb{D}_{Q_0}$ we can write
\begin{multline}\label{pruebacoefcarleson}
\mathfrak{m}(\mathbb{D}_{Q})
=
\sum_{Q'\in\mathbb{D}_{Q}}\gamma_{Q'}=\sum_{Q'\in\mathbb{D}_{Q}}\sum_{I\in\mathcal{W}_{Q'}^*}\frac{\sup_{I^*}|\mathcal{E}|^2}{\ell(I)}|I|
\\
\lesssim
\sum_{Q'\in\mathbb{D}_{Q}}\sum_{I\in\mathcal{W}_{Q'}^*}\iint_{I}\frac{a(Y)^2}{\delta(Y)}\,dY
\le
\sum_{Q'\in\mathbb{D}_{Q}}\iint_{U_{Q'}}\frac{a(Y)^2}{\delta(Y)}\,dY
\lesssim
\iint_{T_{Q}}\frac{a(Y)^2}{\delta(Y)}\,dY.
\end{multline}
where we have used that the family $\{U_{Q'}\}_{Q'\in\mathbb{D}_{Q}}$ has bounded overlap. Indeed,  if $Y\in U_{Q'}\cap U_{Q''}$ then $\ell(Q')\approx\delta(Y)\approx \ell(Q'')$ and $\dist(Q',Q'')\le \dist(Y,Q')+\dist(Y,Q'')\lesssim\ell(Q')+\ell(Q'')\approx\ell(Q')$. These readily imply that $Y$ can be only in a bounded number of $U_{Q'}$'s.

On the other hand, by \eqref{definicionkappa12} we know that $T_Q\subset B(x_Q,\kappa_0 r_Q)\cap\Omega$. Also, $\kappa_0r_Q\leq \kappa_0\ell(Q) \leq \kappa_0\ell(Q_0)<\diam(\partial\Omega)$. Using the $\mathrm{AR}$ property, from \eqref{pruebacoefcarleson} we conclude that
$$
\mathfrak{m}(\mathbb{D}_{Q})\lesssim \iint_{B(x_Q,\kappa_0 r_Q)\cap\Omega}\frac{a(Y)^2}{\delta(Y)}\,dY\leq \vertiii{a}\,\sigma(\Delta(x_Q,\kappa_0 r_Q))\lesssim \vertiii{a}\sigma(Q),
$$
Taking the supremum over $Q\in\mathbb{D}_{Q_0}$, we obtain $\|\mathfrak{m}\|_{\mathcal{C}(Q_0)}\leq \kappa \vertiii{a}$ with $\kappa$ depending on the allowable parameters. This completes the proof.
\end{proof}
\medskip

\section{Proof of Theorem \texorpdfstring{\hyperref[perturbationtheo]{\ref*{perturbationtheo}\textnormal{(a)}}}{1.1(a)}}\label{section:main-a} 

Before starting the proof we choose $M_0>2\kappa_0/c$ (which will remain fixed during the proof) where $c$ is the corkscrew constant and $\kappa_0$ as in  \eqref{definicionkappa12}. Given an arbitrary $Q_0\in\mathbb{D}(\partial\Omega)$ with $\ell(Q_0)<\diam(\partial\Omega)/M_0$ we let $B_{Q_0}=B(x_{Q_0},r_{Q_0})$ with $r_{Q_0}\approx\ell(Q_0)$ as in \eqref{deltaQ}. Let $X_{M_0\Delta_{Q_0}}$ be the corkscrew point relative to $M_0\Delta_{Q_0}$ (note that $M_0r_{Q_0}\leq M_0\ell(Q_0)<\diam(\partial\Omega)$). By our choice of $M_0$, it is clear that $\delta(X_{M_0\Delta_{Q_0}})\geq c M_0 r_{Q_0}>2\kappa_0 r_{Q_0}$. Hence, by \eqref{definicionkappa12},
\begin{equation}\label{eq:X0-TQ}
X_{M_0\Delta_{Q_0}}\in\Omega\setminus B_{Q_0}^*\subset\Omega\setminus T_{Q_0}^{**}.
\end{equation}

We will prove Theorem \hyperref[perturbationtheo]{\ref*{perturbationtheo}(a)} using Lemma \ref{extrapolation}. To do that we need to split the proof in several steps.

\subsection{Step 0}
We first make a reduction which will allow us to use some qualitative properties of the elliptic measure. Fix $j\in\mathbb{N}$ (large enough, as we eventually let $j\to\infty$) and $\widetilde{L}=L^j$ be the operator defined by $\widetilde{L}u=-\div(\widetilde{A}\nabla u)$, with
\begin{equation}\label{definicionLtilde}
\widetilde{A}(Y)=A^j(Y):=\left\{
             \begin{array}{ll}
               A(Y) & \hbox{$\text{if }\,\:Y\in\Omega,\:\delta(Y)\geq 2^{-j},$} \\
               A_0(Y) & \hbox{$\text{if }\,\:Y\in\Omega,\:\delta(Y)< 2^{-j}.$}
             \end{array}
           \right.
\end{equation}
Note that the matrix $A^j$ is uniformly elliptic with constant $\Lambda_j=\max\{\Lambda_A,\Lambda_{A_0}\}$, where $\Lambda_A$ and $\Lambda_{A_0}$ are the ellipticity constants of $A$ and $A_0$ respectively. Recall that $\omega_{L_0}\in RH_p(\partial\Omega)$ and that $\widetilde{L}\equiv L_0$ in $\{Y\in\Omega:\,\delta(Y)< 2^{-j}\}$. Therefore, applying Lemma \hyperref[proppde]{\ref*{proppde}(e)} we have that $\omega_{\widetilde{L}}\ll\sigma$ and there exists $k_{\widetilde{L}}^X:=d\omega_{\widetilde{L}}^X/d\sigma$. The fact that $\widetilde{L}$ verifies these qualitative hypotheses will be essential in the following steps. At the end of Step 4 we will have obtained the desired conclusion for the operator $\widetilde{L}=L^j$, with constants independent of $j\in\mathbb{N}$, and in Step 5 we will prove it for $L$ via a limiting argument. From now on, $j\in\mathbb{N}$ will be fixed and we will focus on the operator $\widetilde{L}=L^j$.
\medskip

\subsection{Step 1}
Let us fix $Q_0\in\mathbb{D}(\partial\Omega)$ with $\ell(Q_0)<\diam(\partial\Omega)/M_0$ and $M_0$ as chosen above, and set
$X_{0}:=X_{M_0\Delta_{Q_0}}$ so that \eqref{eq:X0-TQ} holds.
We also fix $\mathcal{F}=\{Q_i\}\subset\mathbb{D}_{Q_0}$ a family of disjoint dyadic subcubes such that
\begin{equation}\label{hipepsilon0}
\|\mathfrak{m}_{\mathcal{F}}\|_{\mathcal{C}(Q_0)}=\sup_{Q\in\mathbb{D}_{Q_0}}\frac{\mathfrak{m}(\mathbb{D}_{\mathcal{F},Q})}{\sigma(Q)}\leq\varepsilon_0,
\end{equation}
with $\varepsilon_0>0$ sufficiently small, to be chosen.
We modify the operator $L_0$ inside the region $\Omega_{\mathcal{F},Q_0}$ (see \eqref{defomegafq}), by defining $L_1=L_1^{\mathcal{F},Q_0}$ as $L_1u=-\div(A_1\nabla u)$, where
$$
A_1(Y):=\left\{
             \begin{array}{ll}
               \widetilde{A}(Y) & \hbox{$\text{if }\, Y\in\Omega_{\mathcal{F},Q_0},$} \\
               A_0(Y) & \hbox{$\text{if }\,Y\in\Omega\setminus \Omega_{\mathcal{F},Q_0},$}
             \end{array}
           \right.
$$
and $\widetilde{A}=A^j$ as in \eqref{definicionLtilde}. By construction, it is clear that $\mathcal{E}_1:=A_1-A_0$ verifies $|\mathcal{E}_1|\leq|\mathcal{E}|\mathbf{1}_{\Omega_{\mathcal{F},Q_0}}$ and also $\mathcal{E}_1(Y)=0$ if $\delta(Y)<2^{-j}$. Hence, the support of $A_1-A_0$ is contained in a compact subset contained in $\Omega$.

Our goal in Step 1 is to prove $\|k_{L_1}^{X_{Q_0}}\|_{L^p(Q_0)}\lesssim \sigma(Q_0)^{-1/p'}$ (uniformly in $j$), using that $\omega_{L_0}\in RH_p(\partial\Omega)$. Note that by Harnack's inequality and Lemma \hyperref[proppde]{\ref*{proppde}(e)}, we have that $\omega_{L_1}\ll\sigma$ and  $\|k_{L_1}^{X_{Q_0}}\|_{L^p(Q_0)}\leq C_j<\infty$ for $k_{L_1}^X:=d\omega_{L_1}^X/d\sigma$. We will use this qualitatively, and the point of this step is to show that we can actually remove the dependence on $j$.

Take an arbitrary $0\leq g\in L^{p'}(Q_0)$ such that $\|g\|_{L^{p'}(Q_0)}=1$. Without loss of generality we may assume that $g$ is defined in $\Omega$ with $g\equiv 0$ in $\Omega\setminus Q_0$. Let $\widetilde{\Delta}_{Q_0}:=\Delta(x_{Q_0},Cr_{Q_0})$ (see \eqref{deltaQ}) and take $0<t<C r_{Q_0}/2$. Set $g_t=P_tg$ (cf. Lemma \ref{densidad}) and consider the solutions
\begin{equation}\label{eq:defi-u0-u1}
u_0^t(X)=\int_{\partial\Omega}g_t(y)\,d\omega_{L_0}^X(y),\qquad u_1^t(X)=\int_{\partial\Omega} g_t(y)\,d\omega_{L_1}^X(y),\qquad X\in\Omega.
\end{equation}
By Lemma \ref{densidad}, $g_t\in\mathrm{Lip}(\partial\Omega)$ with $\supp(g_t)\subset 2\widetilde{\Delta}_{Q_0}$, hence $g_t\in\mathrm{Lip}_c(\partial\Omega)\subset H^{1/2}(\partial\Omega)\cap C_c(\partial\Omega)$.
Recall that  $\mathcal{E}_1=A_1-A_0$ verifies $|\mathcal{E}_1|\leq|\mathcal{E}|\mathbf{1}_{\Omega_{\mathcal{F},Q_0}}$ and also $\mathcal{E}_1(Y)=0$ if $\delta(Y)<2^{-j}$. This, \eqref{eq:X0-TQ} and \eqref{definicionkappa12}, allow us to invoke  Lemma \ref{proprepresent} (see Remark \ref{modific}) to obtain
\begin{align}\label{hastaqui}
&F_{Q_0}^t(X_0)
:=
|u_1^t(X_0)-u_0^t(X_0)|=\bigg|\iint_{\Omega}(A_0-A_1)(Y)\nabla_Y G_{L_1}(X_0,Y)\cdot\nabla u_0^t(Y)\,dY\bigg|
\\
&\ \ \leq
\sum_{Q\in\mathbb{D}_{\mathcal{F},Q_0}}\sum_{I\in\mathcal{W}_{Q}^*}\iint_{I^*}|\mathcal{E}(Y)||\nabla_Y G_{L_1}(X_0,Y)||\nabla u_0^t(Y)|\,dY,\nonumber
\\
&\ \ \le \sum_{Q\in\mathbb{D}_{\mathcal{F},Q_0}}\sum_{I\in\mathcal{W}_{Q}^*}
\sup_{I^*}|\mathcal{E}|\bigg(\iint_{I^*}|\nabla_Y G_{L_1}(X_0,Y)|^2\,dY\bigg)^{1/2}\bigg(\iint_{I^*}|\nabla u_0^t(Y)|^2\,dY\bigg)^{1/2}.\nonumber
\end{align}
Note that by our choice of $X_0=X_{M_0\Delta_{Q_0}}$, see \eqref{eq:X0-TQ}, the function $v(Y)=G_{L_1}(X_0,Y)$ is a weak solution of $L_1v=0$ in $I^{***}$ for every $I\in\mathcal{W}_{Q}^*$ with $Q\in\mathbb{D}_{Q_0}$. Therefore, by Caccioppoli's and Harnack's inequalities, and Lemma \hyperref[proppde]{\ref*{proppde}(a)}, we obtain
\begin{equation}\label{cotacac}
\iint_{I^*}|\nabla_Y G_{L_1}(X_0,Y)|^2\,dY
\approx
\ell(I)^{n-1} G_{L_1}(X_0,X_{Q})^2\approx \bigg(\frac{\omega_{L_1}^{X_0}(Q)}{\sigma(Q)}\bigg)^2|I|.
\end{equation}
Also, since  $\delta(Y)\approx\ell(I)\approx\ell(Q)$ for every $Y\in I^*$ such that $I\in \mathcal{W}_{Q}^*$,
\begin{equation}\label{eq:term-S}
\iint_{I^*}|\nabla u_0^t(Y)|^2\,dY
\approx
\ell(I)^{-1} \ell(Q)^n\iint_{I^*}|\nabla u_0^t(Y)|^2\delta(Y)^{1-n}\,dY.
\end{equation}
Recalling \eqref{coefcarleson}, \eqref{gammauxiliar}, we define the sequences $\alpha=\{\alpha_Q\}_{Q\in\mathbb{D}_{Q_0}}$, $\beta=\{\beta_Q\}_{Q\in\mathbb{D}_{Q_0}}$ by
\begin{equation}\label{definicionalphabeta}
\alpha_{Q}:=
\frac{\omega_{L_1}^{X_0}(Q)}{\sigma(Q)}\bigg(\ell(Q)^n\iint_{U_{Q}}|\nabla u_0^t(Y)|^2\delta(Y)^{1-n}\,dY\bigg)^{1/2}
\quad\mbox{and}\quad
\beta_{Q}:=\gamma_{\mathcal{F},Q}^{1/2}.
\end{equation}
Using Cauchy-Schwarz's inequality and the bounded overlap
of the cubes $I^*$, one can see that \eqref{cotacac}, \eqref{cotacac}, \eqref{eq:term-S}, and \eqref{definicionalphabeta} yield
\begin{multline}\label{est:FQ0-new}
F_{Q_0}^t(X_0)
\lesssim
\sum_{Q\in\mathbb{D}_{Q_0}}
\frac{\omega_{L_1}^{X_0}(Q)}{\sigma(Q)}\,\gamma_{\mathcal{F},Q}^{1/2}\,\bigg(\ell(Q)^n\iint_{U_Q}|\nabla u_0^t(Y)|^2\delta(Y)^{1-n}\,dY\bigg)^{1/2}
\\
=
\sum_{Q\in\mathbb{D}_{Q_0}} \alpha_Q \,\beta_Q
\lesssim \int_{Q_0}\mathcal{A}_{Q_0}\alpha(x)\mathfrak{C}_{Q_0}\beta(x)\,d\sigma(x).
\end{multline}
where in the last estimate we have used Lemma \ref{tentspaces}, and where we recall that  $\mathcal{A}_{Q_0}$, $\mathfrak{C}_{Q_0}$ were defined in \eqref{definicionA-C}.
Using the bounded overlap property of $U_{Q}$ with $Q\in\mathbb{D}_{Q_0}$, we have that
\begin{multline}\label{cotaA}
\mathcal{A}_{Q_0}\alpha(x)
=
\bigg(\sum_{x\in Q\in\mathbb{D}_{Q_0}}
\bigg(\frac{\omega_{L_1}^{X_0}(Q)}{\sigma(Q)}\bigg)^{2}
\iint_{U_{Q}}|\nabla u_0^t(Y)|^2\delta(Y)^{1-n}\,dY\bigg)^{1/2}
\\
\lesssim
M^d_{Q_0} k_{L_1}^{X_0}(x)\,
S_{Q_0}u_0^t(x),
\end{multline}
where
\begin{equation}\label{definicionmaximal}
  M_{Q_0}^df(x):=\sup_{x\in Q\in\mathbb{D}_{Q_0}}\barint_{Q}|f(y)|\,d\sigma(y).
\end{equation}

On the other hand,  \eqref{hipepsilon0} yields
\begin{align}\label{cotaC}
\mathfrak{C}_{Q_0} \beta(x)
&=
\sup_{x\in Q\in\mathbb{D}_{Q_0}}\bigg(\frac{1}{\sigma(Q)}\sum_{Q'\in\mathbb{D}_{Q}}\gamma_{\mathcal{F},Q'}\bigg)^{1/2}
\le
\|\mathfrak{m}_{\mathcal{F}}\|_{\mathcal{C}(Q_0)}^{1/2}
\le \varepsilon_0^{1/2}
.
\end{align}
Plugging \eqref{cotaA},  \eqref{cotaC} into \eqref{est:FQ0-new}, using Hölder's inequality           we conclude that
\begin{align}\label{eq:conc1}
F_{Q_0}^t(X_0)
\lesssim
\varepsilon_0^{1/2} \,\|S_{Q_0}u_0^t\|_{L^{p'}(Q_0)}\,
\| M_{Q_0}^d k_{L_1}^{X_0}\|_{L^{p}(Q_0)}
\lesssim
\varepsilon_0^{1/2}
\big\| k_{L_1}^{X_0}\big\|_{L^p(Q_0)},
\end{align}
where we have used that $M^d_{Q_0}$ is bounded in $L^{p}(Q_0)$  and that
$$
\|S_{Q_0}u_0^t\|_{L^{p'}(Q_0)}\lesssim \|\widetilde{\mathcal{N}}_{Q_0,*}u_0^t\|_{L^{p'}(Q_0)}\lesssim\|g_t\|_{L^{p'}(Q_0)}\lesssim\|g\|_{L^{p'}(Q_0)}=1,
$$
which follows from \eqref{eq:S<N}, Lemma \hyperref[solvability]{\ref*{solvability}(a)}, $\omega_{L_0}\in RH_p(\partial\Omega)$,    \eqref{eq:defi-u0-u1}, and Lemma \ref{densidad}.

From \eqref{eq:conc1}, and for all $0<t<C r_{Q_0}/2$,
$$
0\leq u_1^t(X_0)\leq F_{Q_0}^t(X_0)+u_0^t(X_0)\lesssim\varepsilon_0^{1/2}\|k_{L_1}^{X_0}\|_{L^p(Q_0)}+\|k_{L_0}^{X_0}\|_{L^{p}(2\widetilde{\Delta}_{Q_0})},
$$
where we have used that $\|g_t\|_{L^{p'}(\partial\Omega)}\lesssim 1$ and  Lemma \ref{densidad}, and the implicit constants do not depend on $t$. Next, using the previous estimate,
\begin{multline*}
\int_{\partial\Omega}g(y)k_{L_1}^{X_0}(y)\,d\sigma(y)=u_1^t(X_0)+\int_{\partial\Omega}(g(y)-g_t(y)) k_{L_1}^{X_0}(y)\,d\sigma(y)
\\
\lesssim \varepsilon_0^{1/2}\|k_{L_1}^{X_0}\|_{L^p(Q_0)}+\|k_{L_0}^{X_0}\|_{L^{p}(2\widetilde{\Delta}_{Q_0})}+\|g-g_t\|_{L^{p'}(\partial\Omega)}\|k_{L_1}^{X_0}\|_{L^p(2\widetilde{\Delta}_{Q_0})}.
\end{multline*}
Note that $\|k_{L_1}^{X_0}\|_{L^p(2\widetilde{\Delta}_{Q_0})}\leq C_j<\infty$ by Lemma \hyperref[proppde]{\ref*{proppde}(e)} and Harnack's inequality ($L_0\equiv L_1$ in $\{Y\in\Omega:\:\delta(Y)< 2^{-j}\}$). Recall that $\|g-g_t\|_{L^{p'}(\partial\Omega)}$ as $t\rightarrow 0$ (see Lemma \ref{densidad}) and hence
$$
\int_{\partial\Omega}g(y)k_{L_1}^{X_0}(y)\,d\sigma(y)\lesssim \varepsilon_0^{1/2}\|k_{L_1}^{X_0}\|_{L^p(Q_0)}+\|k_{L_0}^{X_0}\|_{L^{p}(2\widetilde{\Delta}_{Q_0})}.
$$
Taking the supremum over $0\leq g\in L^{p'}(Q_0)$ with $\|g\|_{L^{p'}(Q_0)}=1$, the latter implies
$$
\|k_{L_1}^{X_0}\|_{L^p(Q_0)}\leq C\varepsilon_0^{1/2}\|k_{L_1}^{X_0}\|_{L^p(Q_0)}+ C\|k_{L_0}^{X_0}\|_{L^p(2\widetilde{\Delta}_{Q_0})},
$$
with $C$ depending only on dimension, $p$, the 1-sided $\mathrm{CAD}$ constants, the ellipticity of $L_0$ and $L$, and the constant in $\omega_{L_0}\in RH_p(\partial\Omega)$.
As mentioned above, $\|k_{L_1}^{X_0}\|_{L^p(Q_0)}\leq C_j<\infty$, thus taking $\varepsilon_0<C^{-2}/4$ we can hide the first term in the left hand side, and consequently $\|k_{L_1}^{X_0}\|_{L^p(Q_0)}\lesssim\|k_{L_0}^{X_0}\|_{L^p(2\widetilde{\Delta}_{Q_0})}$. Recalling  that $X_0=X_{M_0\Delta_{Q_0}}$ we have that $\delta(X_{Q_0})\approx \ell(Q_0)$, $\delta(X_0)\approx M_0\ell(Q_0)\geq\ell(Q_0)$, $\delta(X_{2\widetilde{\Delta}_{Q_0}})\approx  \ell(Q_0)$. Also, $|X_0-X_{Q_0}|+ |X_0-X_{2\widetilde{\Delta}_{Q_0}}|\lesssim M_0\ell(Q_0)$. Hence, using Harnack's inequality
(with constants depending on $M_0$, which has been already fixed), and the fact that $\omega_{L_0}\in RH_p(\partial\Omega)$, we conclude that
\begin{multline}\label{rhpL1}
\int_{Q_0}k_{L_1}^{X_{Q_0}}(y)^p\,d\sigma(y)
\approx
\int_{Q_0}k_{L_1}^{X_0}(y)^p\,d\sigma(y)\lesssim\int_{2\widetilde{\Delta}_{Q_0}}k_{L_0}^{X_0}(y)^p\,d\sigma(y)
\\
\approx
\int_{2\widetilde{\Delta}_{Q_0}}k_{L_0}^{X_{2\widetilde{\Delta}_{Q_0}}}(y)^p\,d\sigma(y)\lesssim \sigma(2\widetilde{\Delta}_{Q_0})^{1-p}\approx \sigma(Q_0)^{1-p}.
\end{multline}
\smallskip

\subsection{Self-improvement of Step 1}
The goal of this section is to extend \eqref{rhpL1} and show that it holds with the integration taking place in an arbitrary $Q\in\mathbb{D}_{Q_0}$, but with the pole of the elliptic measure being $X_{Q_0}$. In doing this, we will lose the exponent $p$, showing that a $RH_q$ inequality holds for some fixed $q$.

Fix $Q\in\mathbb{D}_{Q_0}$, and let $L_1^Q$ be the operator defined by $L_1^Qu=-\div(A_1^Q\nabla u)$, where
$$
A_1^Q(Y):=\left\{
             \begin{array}{ll}
               \widetilde{A}(Y) & \hbox{$\text{if }\,Y\in\Omega_{\mathcal{F},Q}$}, \\
               A_0(Y) & \hbox{$\text{if }\,Y\in\Omega\setminus \Omega_{\mathcal{F},Q}$},
             \end{array}
           \right.
$$
with $\widetilde{A}=\widetilde{A}^j$ as in \eqref{definicionLtilde}. Since $L_1^Q\equiv L_0$ in $\{Y\in\Omega:\,\delta(Y)<2^{-j}\}$, Lemma \hyperref[proppde]{\ref*{proppde}(e)} implies that $\omega_{L_1^Q}\ll\sigma$, hence there exists $k_{L_1^Q}^X=d\omega_{L_1^{Q}}^X/d\sigma$. Our first goal is to obtain
\begin{equation}\label{rhpL1Q}
\int_{Q}k_{L_1^Q}^{X_{Q}}(y)^p\,d\sigma(y)\lesssim \sigma(Q)^{1-p}.
\end{equation}
We consider two cases. Suppose first that $Q\subset Q_i$ for some $Q_i\in\mathcal{F}$, then $\Omega_{\mathcal{F},Q}=\emptyset$, $L_1^Q\equiv L_0$ in $\Omega$, and \eqref{rhpL1Q} is a consequence of the fact that $\omega_{L_0}\in RH_p(\partial\Omega)$. In other case, that is, if $Q\in\mathbb{D}_{\mathcal{F},Q_0}$, we define $\mathcal{F}_Q=\{Q_i\in\mathcal{F}:\:Q_i\cap Q\neq\emptyset\}=\{Q_i\in\mathcal{F}:\:Q_i\subsetneq Q\}$. Note that $A_0-A_1^Q$ is supported in $\Omega_{\mathcal{F}_Q,Q}=\Omega_{\mathcal{F},Q}$, and clearly
$$
\|\mathfrak{m}_{\mathcal{F}_Q}\|_{\mathcal{C}(Q)}=\sup_{Q'\in\mathbb{D}_Q}\frac{\mathfrak{m}_{\mathcal{F}_Q}(\mathbb{D}_{Q'})}{\sigma(Q')}\leq
\sup_{Q'\in\mathbb{D}_{Q_0}}\frac{\mathfrak{m}_{\mathcal{F}}(\mathbb{D}_{Q'})}{\sigma(Q')}\leq\varepsilon_0.
$$
We can then repeat the argument of Step $1$ for the operator $L_1^Q$ replacing $L_1$, and with $Q$ and $\mathcal{F}_Q$ in place of respectively $Q_0$ and $\mathcal{F}$. Hence, the estimate \eqref{rhpL1} becomes \eqref{rhpL1Q}.

We next notice that using \cite[Lemma 3.55]{hofmartell}, there exists $0<\widehat{\kappa}_1<\kappa_1$ (see \eqref{definicionkappa12}), depending only on the allowable parameters, such that
$\widehat{\kappa}_1 B_Q\cap\Omega_{\mathcal{F},Q_0}=\widehat{\kappa}_1 B_Q\cap\Omega_{\mathcal{F},Q}$.
This easily gives that $L_1\equiv L_1^{Q}$ in $\widehat{\kappa}_1B_Q\cap\Omega$. Using now Lemma \hyperref[proppde]{\ref*{proppde}(e)} and Harnack's inequality, we have \begin{equation}\label{competa}
k_{L_1}^{X_Q}(y)\approx k_{L_1^Q}^{X_Q}(y),\quad\text{ for }\sigma\text{-a.e. }y\in \eta\Delta_Q,
\end{equation}
where $\eta=\widehat{\kappa}_1/(2\kappa_0)$ and $\kappa_0$ is as in \eqref{definicionkappa0}, and hence $\eta\Delta_Q\subset \Delta_Q\subset Q$. Combining \eqref{rhpL1Q}, \eqref{competa}, Lemma \ref{bourgain}, Lemma \hyperref[proppde]{\ref*{proppde}(b)} and Harnack's inequality we obtain
\begin{multline*}
\bigg(\barint_{\eta\Delta_Q}k_{L_1}^{X_Q}(y)^p\,d\sigma(y)\bigg)^{1/p}
\lesssim
\bigg(\barint_{Q}k_{L_1^Q}^{X_Q}(y)^p\,d\sigma(y)\bigg)^{1/p}
\lesssim
\sigma(Q)^{-1}
\\
\lesssim\sigma(Q)^{-1}\omega_{L_1}^{X_Q}(Q)\lesssim\barint_{\eta\Delta_Q}k_{L_1}^{X_Q}(y)\,d\sigma(y).
\end{multline*}
Now, using Remark \ref{dyadicchangepole} we have that
\begin{equation}\label{endselfimprovement}
\bigg(\barint_{\eta\Delta_Q}k_{L_1}^{X_{Q_0}}(y)^p\,d\sigma(y)\bigg)^{1/p}\leq C_1\barint_{\eta\Delta_Q}k_{L_1}^{X_{Q_0}}(y)\,d\sigma(y),
\end{equation}
with $C_1>1$ depending only on dimension, $p$, the 1-sided $\mathrm{CAD}$ constants, the ellipticity of $L_0$ and $L$, and the constant in $\omega_{L_0}\in RH_p(\partial\Omega)$. Note that \eqref{endselfimprovement} holds then for every $Q\in\mathbb{D}_{Q_0}$. Also, by means of Lemma \ref{bourgain}, Lemma \hyperref[proppde]{\ref*{proppde}(b)} and Harnack's inequality, there exists $C_\eta>1$ such that $0<\omega_{L_1}^{X_{Q_0}}(Q)\leq C_\eta\omega_{L_1}^{X_{Q_0}}(\eta\Delta_Q)$ for every $Q\in\mathbb{D}_{Q_0}$. Using Lemma \ref{lemmab7} we obtain that $\omega_{L_1}^{X_{Q_0}}\in A_\infty^{\rm dyadic}(Q_0)$. This and Lemma  \hyperref[lemm_w-Pw-:properties]{\ref*{lemm_w-Pw-:properties}(b)} yield $\mathcal{P}_{\mathcal{F}}\omega_{L_1}^{X_{Q_0}}\in A_{\infty}^{\rm dyadic}(Q_0)$.
\medskip

\subsection{Step 2}
We define a new operator $L_2$ by changing $L_1$ below the region $\Omega_{\mathcal{F},Q_0}$. More precisely, set $L_2u=-\div(A_2\nabla u)$ with
$$
A_2(Y):=\left\{
             \begin{array}{ll}
               \widetilde{A}(Y) & \hbox{$\text{if }\,Y\in T_{Q_0}\setminus\Omega_{\mathcal{F},Q_0}$,} \\
               A_1(Y) & \hbox{$\text{if }\,Y\in \Omega\setminus(T_{Q_0}\setminus\Omega_{\mathcal{F},Q_0}).$}
             \end{array}
           \right.
$$
Note that by construction, $A_2=\widetilde{A}$ in $T_{Q_0}$ and $A_2=A_0$ in $\Omega\setminus T_{Q_0}$. Our goal is to prove that $\mathcal{P}_{\mathcal{F}}\omega_{L_2}^{X_{Q_0}}\in A_{\infty}^{\rm dyadic}(Q_0)$ by using Lemma \ref{sawtoothlemma}. For $k=1,2$, we write $\omega_{L_k}=\omega_{L_k,\Omega}^{A_{Q_0}}$ and $\omega_{L_k,\star}=\omega_{L_k,\Omega_{\mathcal{F},Q_0}}^{A_{Q_0}}$ for the elliptic measures of $L_k$ with respect to the domains $\Omega$ and $\Omega_{\mathcal{F},Q_0}$, with fixed pole at $A_{Q_0}$ (see \cite[Proposition 6.4]{hofmartell}). Note that since $A_1= A_2$ in $\Omega_{\mathcal{F},Q_0}$ then $\omega_{L_1,\star}=\omega_{L_2,\star}$. Finally let $\nu_{L_k}=\nu_{L_k}^{A_{Q_0}}$ be the corresponding measures defined as in \eqref{eq:def-nu}, and observe that \eqref{eq:def-nu:P} imply that $\mathcal{P}_{\mathcal{F}}\nu_{L_1}=\mathcal{P}_{\mathcal{F}}\nu_{L_2}$.

In Step 1 we have shown that $\mathcal{P}_\mathcal{F}\omega_{L_1}^{X_{Q_0}}\in A_{\infty}^{\text{dyadic}}(Q_0)$, thus Harnack's inequality and \eqref{ainfsawtooth} give that $\mathcal{P}_{\mathcal{F}}\nu_{L_2}=\mathcal{P}_{\mathcal{F}}\nu_{L_1}\in A_{\infty}^{\text{dyadic}}(Q_0)$. Another use of \eqref{ainfsawtooth} and Harnack's inequality allows us to obtain that $\mathcal{P}_{\mathcal{F}}\omega_{L_2}^{X_{Q_0}} \approx   \mathcal{P}_{\mathcal{F}}\omega_{L_2}\in A_{\infty}^{\rm dyadic}(Q_0)$. Note that by Lemma \hyperref[proppde]{\ref*{proppde}(b)}, Harnack's inequality and Lemma \hyperref[lemm_w-Pw-:properties]{\ref*{lemm_w-Pw-:properties}(a)} it follows that $\mathcal{P}_{\mathcal{F}}\omega_{L_2}^{X_{Q_0}}$ is dyadically doubling in $Q_0$. Thus, \cite[Lemma B.7]{hofmartell} implies that there exist $\theta,\theta'>0$ such that
\begin{equation}\label{finpaso2}
\bigg(\frac{\sigma(E)}{\sigma(Q)}\bigg)^{\theta}\lesssim
\frac{\mathcal{P}_\mathcal{F}\omega_{L_2}^{X_{Q_0}}(E)}{\mathcal{P}_\mathcal{F}\omega_{L_2}^{X_{Q_0}}(Q)}\lesssim
\bigg(\frac{\sigma(E)}{\sigma(Q)}\bigg)^{\theta'},\qquad Q\in\mathbb{D}_{Q_0},\qquad E\subset Q.
\end{equation}
\medskip

\subsection{Step 3}
To complete the proof it remains to change the operator outside $T_{Q_0}$. Let us introduce $L_3u=-\div(A_3\nabla u)$, where
$$
A_3(Y):=\left\{
             \begin{array}{ll}
               A_2(Y) & \hbox{$\text{if }\,Y\in T_{Q_0}$,} \\
               \widetilde{A}(Y) & \hbox{$\text{if }\,Y\in \Omega\setminus T_{Q_0},$}
             \end{array}
           \right.
$$
and note that $L_3\equiv\widetilde{L}$ in $\Omega$.

We want to prove that for every $0<\varepsilon<1$, there exists $C_\varepsilon>1$ such that
\begin{equation}\label{concpaso3}
E\subset Q_0,\quad\frac{\sigma(E)}{\sigma(Q_0)}\geq\varepsilon\implies\frac{\mathcal{P}_\mathcal{F}\omega_{L_3}^{X_{Q_0}}(E)}{\mathcal{P}_\mathcal{F}\omega_{L_3}^{X_{Q_0}}(Q_0)}\geq\frac{1}{C_\varepsilon}.
\end{equation}
Let $0<\varepsilon<1$ and let $E\subset Q_0$ be such that
$\sigma(E)\geq\varepsilon\sigma(Q_0)$. First, we can disregard the trivial case
$\mathcal{F}=\{Q_0\}$:
$$
\frac{\mathcal{P}_\mathcal{F}\omega_{L_3}^{X_{Q_0}}(E)}{\mathcal{P}_\mathcal{F}\omega_{L_3}^{X_{Q_0}}(Q_0)}=
\frac{\frac{\sigma(E)}{\sigma(Q_0)}\,\omega_{L_3}^{X_{Q_0}}(Q_0)}{\frac{\sigma(Q_0)}{\sigma(Q_0)}\,\omega_{L_3}^{X_{Q_0}}(Q_0)}=\frac{\sigma(E)}{\sigma(Q_0)}\geq\varepsilon.
$$

Suppose then that $\mathcal{F}\subsetneq\mathbb{D}_{Q_0}\setminus\{Q_0\}$. For $\tau\ll 1$ we consider the sets
$$
\Sigma_\tau:=\big\{x\in Q_0:\:\dist(x,\partial\Omega\setminus Q_0)<\tau\ell(Q_0)\big\}
$$
and $\widetilde{Q_0}:=Q_0\setminus\bigcup_{Q'\in\mathcal{I}_\tau}Q'$, where
$$
\mathcal{I}_{\tau}=\big\{Q'\in\mathbb{D}_{Q_0}:\:\tau\ell(Q_0)<\ell(Q')\leq 2\tau\ell(Q_0),\ Q'\cap\Sigma_{\tau}\neq\emptyset\big\}.
$$
By construction, $\Sigma_{\tau}\subset \bigcup_{Q'\in\mathcal{I}_\tau}Q'$, and there exists $C=C(n,\mathrm{AR})>0$ such that every $Q'\in\mathcal{I}_\tau$ satisfies $Q'\subset\Sigma_{C\tau}$.  Using Lemma \hyperref[dyadiccubes]{\ref*{dyadiccubes}(f)}, for $\tau=\tau(\varepsilon)>0$ sufficiently small we have
$$
\sigma(Q_0\setminus\widetilde{Q_0})\leq\sigma(\Sigma_{C\tau})\leq C_1(C\tau)^\eta\sigma(Q_0)\leq\frac{\varepsilon}{2}\,\sigma(Q_0),
$$
and letting $F=E\cap\widetilde{Q_0}$, it follows that
$$
\varepsilon\sigma(Q_0)\leq\sigma(E)\leq\sigma(F)+\sigma(Q_0\setminus\widetilde{Q_0})\leq\sigma(F)+\frac{\varepsilon}{2}\sigma(Q_0).
$$
Hence $\sigma(F)/\sigma(Q_0)\geq\varepsilon/2$ and by \eqref{finpaso2}, we conclude that
\begin{equation}\label{utpasant}
\frac{\mathcal{P}_\mathcal{F}\omega_{L_2}^{X_{Q_0}}(F)}{\mathcal{P}_\mathcal{F}\omega_{L_2}^{X_{Q_0}}(Q_0)}\gtrsim
\bigg(\frac{\sigma(F)}{\sigma(Q_0)}\bigg)^\theta\geq
\Big(\frac{\varepsilon}{2}\Big)^\theta.
\end{equation}

We claim that there exists $c_\varepsilon>0$ such that $\mathcal{P}_\mathcal{F}\omega_{L_3}^{X_{Q_0}}(F)\geq
c_\varepsilon\mathcal{P}_\mathcal{F}\omega_{L_2}^{X_{Q_0}}(F)$. Assuming this momentarily, we easily obtain \eqref{concpaso3}:
$$
\frac{\mathcal{P}_\mathcal{F}\omega_{L_3}^{X_{Q_0}}(E)}{\mathcal{P}_\mathcal{F}\omega_{L_3}^{X_{Q_0}}(Q_0)}\geq\mathcal{P}_\mathcal{F}\omega_{L_3}^{X_{Q_0}}(F)\geq c_{\varepsilon}
\mathcal{P}_\mathcal{F}\omega_{L_2}^{X_{Q_0}}(F)\gtrsim c_\varepsilon\frac{\mathcal{P}_\mathcal{F}\omega_{L_2}^{X_{Q_0}}(F)}{\mathcal{P}_\mathcal{F}\omega_{L_2}^{X_{Q_0}}(Q_0)}\geq
c_\varepsilon\Big(\frac{\varepsilon}{2}\Big)^\theta=:\frac{1}{C_\varepsilon},
$$
where we have used Lemma \ref{bourgain}, \eqref{utpasant}, and the fact that $\mathcal{P}_\mathcal{F}\omega_{L_k}^{X_{Q_0}}(Q_0)=\omega_{L_k}^{X_{Q_0}}(Q_0)$ for $k=2,3$.

Let us then show our claim. First, since $L_2\equiv L_3$ in $T_{Q_0}$ and $\widetilde{Q_0}\subset Q_0\setminus\Sigma_\tau$, Lemma \ref{comp3} yields
\begin{equation}\label{mod39}
k_{L_2}^{X_{Q_0}}(y)\approx_\tau k_{L_3}^{X_{Q_0}}(y),\quad\text{ for }\sigma\text{-a.e. }y\in\widetilde{Q_0}.
\end{equation}
This and the fact that $F\subset\widetilde{Q_0}$ give
$$
\omega_{L_2}^{X_{Q_0}}\Big(F\setminus\bigcup_{Q_i\in\mathcal{F}}Q_i\Big)\approx_\tau
\omega_{L_3}^{X_{Q_0}}\Big(F\setminus\bigcup_{Q_i\in\mathcal{F}}Q_i\Big),
$$
which in turn yields
\begin{multline}\label{deesigualdaa}
\mathcal{P}_\mathcal{F}\omega_{L_3}^{X_{Q_0}}(F)
=
\omega_{L_3}^{X_{Q_0}}\Big(F\setminus\bigcup_{Q_i\in\mathcal{F}}Q_i\Big)+\sum_{Q_i\in\mathcal{F}}\frac{\sigma(F\cap Q_i)}{\sigma(Q_i)}\,\omega_{L_3}^{X_{Q_0}}(Q_i)
\\
\geq
c_\tau\omega_{L_2}^{X_{Q_0}}\Big(F\setminus\bigcup_{Q_i\in\mathcal{F}}Q_i\Big)+\sum_{Q_i\in\mathcal{F}}\frac{\sigma(F\cap Q_i)}{\sigma(Q_i)}\,\omega_{L_3}^{X_{Q_0}}(Q_i).
\end{multline}
It remains to estimate the second term.
Note that in the sum we can restrict ourselves to those cubes $Q_i\in\mathcal{F}$ such that $F\cap Q_i\neq\emptyset$. We consider two cases. If
$Q_i\subset\widetilde{Q_0}$, using \eqref{mod39} we have that $\omega_{L_3}^{X_{Q_0}}(Q_i)\approx_\tau\omega_{L_2}^{X_{Q_0}}(Q_i)$.
Otherwise, if $Q_i\setminus\widetilde{Q_0}\neq\emptyset$, there exists $Q'\in\mathcal{I}_\tau$ such that $Q_i\cap Q'\neq\emptyset$. Then $Q'\subsetneq Q_i$ (if $Q_i\subset Q'$ then  $Q_i\subset Q_0\setminus\widetilde{Q_0}$, contradicting that $F\cap Q_i\neq\emptyset$ and $F\subset\widetilde{Q_0}$) and, in particular, $\ell(Q_i)>\tau\ell(Q_0)$. Let $x_{Q_i}$ be the center of $Q_i$, and let $\Delta_{Q_i}=\Delta(x_{Q_i},r_{Q_i})$ with $r_{Q_i}\approx\ell(Q_i)$ as in \eqref{deltaQ2}. Take $\widehat{Q_i}\in\mathbb{D}_{Q_i}$ with $x_{Q_i}\in\widehat{Q_i}$, $\ell(\widehat{Q_i})=2^{-M}\ell(Q_i)$ and $M>1$ to be chosen. Notice that
$\diam(\widehat{Q_i})\approx 2^{-M}\ell(Q_i)\approx 2^{-M}r_{Q_i}$ and clearly
\begin{multline*}
r_{Q_i}\leq\dist(x_{Q_i},\partial\Omega\setminus\Delta_{Q_i})
\leq
\diam(\widehat{Q_i})+\dist(\widehat{Q_i},\partial\Omega\setminus\Delta_{Q_i})
\\
\approx 2^{-M}r_{Q_i}+\dist(\widehat{Q_i},\partial\Omega\setminus\Delta_{Q_i}).
\end{multline*}
Taking $M\gg 1$ large enough (depending on the $\mathrm{AR}$ constant), we conclude that
$c\tau\ell(Q_0)<\dist(\widehat{Q_i},\partial\Omega\setminus\Delta_{Q_i})\le \dist(\widehat{Q_i}, \partial\Omega\setminus Q_0)$ and hence $\widehat{Q_i}\subset Q_0\setminus\Sigma_{c\tau}$. Again, using Lemma \ref{comp3} and the fact that
$\omega_{L_2}^{X_{Q_0}}$ is doubling in $Q_0$ (which is a consequence of Lemma \hyperref[proppde]{\ref*{proppde}(b)} and Harnack's inequality), we obtain
$$
\omega_{L_3}^{X_{Q_0}}(Q_i)\geq\omega_{L_3}^{X_{Q_0}}(\widehat{Q_i})\approx_{\tau}\omega_{L_2}^{X_{Q_0}}(\widehat{Q_i})
\gtrsim
\omega_{L_2}^{X_{Q_0}}(Q_i).
$$
In the two cases, since $\tau=\tau(\varepsilon)$, \eqref{deesigualdaa} turns into
$$
\mathcal{P}_\mathcal{F}\omega_{L_3}^{X_{Q_0}}(F)\gtrsim_{\varepsilon}
\omega_{L_2}^{X_{Q_0}}\Big(F\setminus\bigcup_{Q_i\in\mathcal{F}}Q_i\Big)+
\sum_{Q_i\in\mathcal{F}}\frac{\sigma(Q_i\cap
F)}{\sigma(Q_i)}\omega_{L_2}^{X_{Q_0}}(Q_i)
=
\mathcal{P}_\mathcal{F}\omega_{L_2}^{X_{Q_0}}(F),
$$
completing the proof of our claim.

\medskip

Recalling that $\widetilde{L}\equiv L_3$, the previous argument proves the following proposition:
\begin{proposition}\label{prop:Step 3}
There exists $\varepsilon_0>0$ (depending only on dimension, $p$, the 1-sided $\mathrm{CAD}$ constants, the ellipticity of $L_0$ and $L$, and the constant in $\omega_{L_0}\in RH_p(\partial\Omega)$) such that the following property holds: given $\varepsilon\in(0,1)$, there exists $C_\varepsilon>1$ such that for every $Q_0\in\mathbb{D}(\partial\Omega)$ with $\ell(Q_0)<\diam(\partial\Omega)/M_0$ and every $\mathcal{F}=\{Q_i\}\subset\mathbb{D}_{Q_0}$ with $\|\mathfrak{m}_\mathcal{F}\|_{\mathcal{C}(Q_0)}\leq\varepsilon_0$, there holds
\begin{equation}\label{concpaso33}
E\subset Q_0,\quad\frac{\sigma(E)}{\sigma(Q_0)}\geq\varepsilon\implies
\frac{\mathcal{P}_\mathcal{F}\omega_{\widetilde{L}}^{X_{Q_0}}(E)}{\mathcal{P}_\mathcal{F}\omega_{\widetilde{L}}^{X_{Q_0}}(Q_0)}\geq\frac{1}{C_\varepsilon},
\end{equation}
where $\widetilde{L}=L^j$ is the operator defined in \eqref{definicionLtilde} and $j\in\mathbb{N}$ is arbitrary.
\end{proposition}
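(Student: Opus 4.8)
\emph{Proof sketch.} The plan is to establish the proposition by synthesizing the four preceding steps: one runs a chain of operator modifications $L_0\rightsquigarrow L_1\rightsquigarrow L_2\rightsquigarrow L_3\equiv\widetilde L$, each localized to a controlled subregion of $\Omega$, and propagates a quantitative reverse-H\"older / $A_\infty^{\rm dyadic}$ property of the corresponding \emph{projected} measures $\mathcal{P}_\mathcal{F}(\cdot)$ along the chain, arriving at the lower bound \eqref{concpaso33} for $\mathcal{P}_\mathcal{F}\omega_{\widetilde L}^{X_{Q_0}}$. The $j$-truncation in \eqref{definicionLtilde} will be used only qualitatively: it guarantees, via Lemma~\hyperref[proppde]{\ref*{proppde}(e)} and Harnack, that every elliptic measure appearing below is absolutely continuous with density a priori in the relevant $L^p$ space (with a constant $C_j$ that we are not allowed to use quantitatively), which legitimizes the absorption argument in Step~1; the final constants are independent of $j$.

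\emph{Step 1} is the heart. Introduce $L_1$ agreeing with $\widetilde L$ on $\Omega_{\mathcal{F},Q_0}$ and with $L_0$ elsewhere, so that $\mathcal{E}_1=A_1-A_0$ is supported in a compact subset of $\Omega$. Using the representation formula of Lemma~\ref{proprepresent} (and Remark~\ref{modific}) for the solutions $u_0^t,u_1^t$ with mollified data $g_t=P_tg$, one bounds $|u_1^t(X_0)-u_0^t(X_0)|$ by a bilinear sum $\sum_{Q\in\mathbb{D}_{Q_0}}\alpha_Q\beta_Q$ with $\beta_Q=\gamma_{\mathcal{F},Q}^{1/2}$ and $\alpha_Q$ built from $\omega_{L_1}^{X_0}(Q)/\sigma(Q)$ and the square function of $u_0^t$ on $U_Q$; here the Caccioppoli/Harnack identity \eqref{cotacac} for $\nabla_YG_{L_1}(X_0,\cdot)$ is available precisely because $X_0=X_{M_0\Delta_{Q_0}}$ lies outside $T_{Q_0}^{**}$, cf.\ \eqref{eq:X0-TQ}. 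Then the discrete tent-space estimate of Lemma~\ref{tentspaces}, together with $\mathfrak{C}_{Q_0}\beta\le\|\mathfrak{m}_\mathcal{F}\|_{\mathcal{C}(Q_0)}^{1/2}\le\varepsilon_0^{1/2}$, $\mathcal{A}_{Q_0}\alpha\lesssim M^d_{Q_0}k_{L_1}^{X_0}\cdot S_{Q_0}u_0^t$, H\"older's inequality, the bound $\|S_{Q_0}u_0^t\|_{L^{p'}(Q_0)}\lesssim\|g\|_{L^{p'}(Q_0)}=1$ (from \eqref{eq:S<N}, Theorem~\hyperref[solvability]{\ref*{solvability}(a)} and $\omega_{L_0}\in RH_p(\partial\Omega)$), and boundedness of $M^d_{Q_0}$ on $L^p(Q_0)$, give $|u_1^t(X_0)-u_0^t(X_0)|\lesssim\varepsilon_0^{1/2}\|k_{L_1}^{X_0}\|_{L^p(Q_0)}$. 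Combining this with the estimate of $u_0^t(X_0)$ by $\|k_{L_0}^{X_0}\|_{L^p(2\widetilde\Delta_{Q_0})}$, taking the supremum over $g$ and then $t\to0^+$, and using the a priori finiteness of $\|k_{L_1}^{X_0}\|_{L^p(Q_0)}$ to absorb once $\varepsilon_0$ is small enough, one obtains the $j$-uniform bound \eqref{rhpL1}.

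In the \emph{self-improvement} one reruns this argument on each $Q\in\mathbb{D}_{Q_0}$ with the localized operator $L_1^Q$ and family $\mathcal{F}_Q=\{Q_i\in\mathcal{F}:Q_i\cap Q\neq\emptyset\}$ (whose Carleson norm is still $\le\varepsilon_0$), compares $k_{L_1}^{X_Q}$ with $k_{L_1^Q}^{X_Q}$ on $\eta\Delta_Q$ (via \cite[Lemma~3.55]{hofmartell}, Lemma~\hyperref[proppde]{\ref*{proppde}(e)} and Harnack), changes poles by Remark~\ref{dyadicchangepole}, and feeds the resulting reverse-H\"older inequality on the surface balls $\eta\Delta_Q$ into Lemma~\ref{lemmab7} to get $\omega_{L_1}^{X_{Q_0}}\in A_\infty^{\rm dyadic}(Q_0)$, hence $\mathcal{P}_\mathcal{F}\omega_{L_1}^{X_{Q_0}}\in A_\infty^{\rm dyadic}(Q_0)$ by Lemma~\hyperref[lemm_w-Pw-:properties]{\ref*{lemm_w-Pw-:properties}(b)}. \emph{Step 2} changes $L_1$ below the sawtooth: $L_2$ equals $\widetilde L$ on $T_{Q_0}$ and $L_0$ on $\Omega\setminus T_{Q_0}$; since $A_1=A_2$ on $\Omega_{\mathcal{F},Q_0}$ the sawtooth measures coincide, so $\mathcal{P}_\mathcal{F}\nu_{L_1}=\mathcal{P}_\mathcal{F}\nu_{L_2}$ by \eqref{eq:def-nu:P}, and two applications of \eqref{ainfsawtooth} with Harnack transfer the $A_\infty^{\rm dyadic}$ property from $\mathcal{P}_\mathcal{F}\omega_{L_1}^{X_{Q_0}}$ to $\mathcal{P}_\mathcal{F}\omega_{L_2}^{X_{Q_0}}$, which is moreover dyadically doubling (Lemma~\hyperref[proppde]{\ref*{proppde}(b)}, Harnack, Lemma~\hyperref[lemm_w-Pw-:properties]{\ref*{lemm_w-Pw-:properties}(a)}); then \cite[Lemma~B.7]{hofmartell} yields the two-sided power bound \eqref{finpaso2}. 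Finally, \emph{Step 3} changes $L_2$ outside $T_{Q_0}$ to $L_3\equiv\widetilde L$: given $\varepsilon$ and $E\subset Q_0$ with $\sigma(E)\ge\varepsilon\sigma(Q_0)$, one disposes of the case $\mathcal{F}=\{Q_0\}$ directly, and otherwise discards a thin collar $Q_0\setminus\widetilde{Q_0}$ (controlled via the small-boundary property Lemma~\hyperref[dyadiccubes]{\ref*{dyadiccubes}(f)}) so that $F:=E\cap\widetilde{Q_0}$ satisfies $\sigma(F)\ge(\varepsilon/2)\sigma(Q_0)$, whence \eqref{finpaso2} gives $\mathcal{P}_\mathcal{F}\omega_{L_2}^{X_{Q_0}}(F)\gtrsim(\varepsilon/2)^\theta\,\mathcal{P}_\mathcal{F}\omega_{L_2}^{X_{Q_0}}(Q_0)$. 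It remains to prove $\mathcal{P}_\mathcal{F}\omega_{L_3}^{X_{Q_0}}(F)\gtrsim_\varepsilon\mathcal{P}_\mathcal{F}\omega_{L_2}^{X_{Q_0}}(F)$, which follows from Lemma~\ref{comp3} (since $L_2\equiv L_3$ on $T_{Q_0}$ and $F\subset\widetilde{Q_0}\subset Q_0\setminus\Sigma_\tau$) applied to the term $F\setminus\bigcup_iQ_i$ and to each term $\tfrac{\sigma(F\cap Q_i)}{\sigma(Q_i)}\,\omega_{L_3}^{X_{Q_0}}(Q_i)$ with $F\cap Q_i\neq\emptyset$ (directly if $Q_i\subset\widetilde{Q_0}$; otherwise $\ell(Q_i)>\tau\ell(Q_0)$ and one compares via a deep subcube $\widehat{Q_i}\subset Q_0\setminus\Sigma_{c\tau}$ using doubling of $\omega_{L_2}^{X_{Q_0}}$). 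Assembling these, and using $\mathcal{P}_\mathcal{F}\omega_{L_k}^{X_{Q_0}}(Q_0)=\omega_{L_k}^{X_{Q_0}}(Q_0)$ and Lemma~\ref{bourgain}, one obtains \eqref{concpaso33} with $C_\varepsilon$ depending only on $\varepsilon$ and the allowable parameters; since $L_3\equiv\widetilde L=L^j$, this is the proposition.

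I expect the main obstacle to be Step~1, i.e.\ obtaining the $j$-uniform reverse-H\"older bound \eqref{rhpL1}. That is the only place where the smallness of $\varepsilon_0$ is genuinely used, and it requires carefully balancing the representation formula, the Caccioppoli/Harnack control of $\nabla G_{L_1}$ at the far-away pole $X_0$, the discrete tent-space pairing, and the $\|S_{Q_0}u\|_{L^{p'}}\lesssim\|\widetilde{\mathcal{N}}_{Q_0,*}u\|_{L^{p'}}$ estimate, with the a priori (qualitative, $j$-dependent) finiteness of $\|k_{L_1}^{X_0}\|_{L^p(Q_0)}$ making the absorption legitimate. Everything after Step~1 --- the sawtooth comparisons of Steps~2 and~3 and the dyadic covering/doubling manipulations --- is, by comparison, routine bookkeeping.
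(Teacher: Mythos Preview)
Your proof sketch is correct and follows essentially the same approach as the paper's own argument, which is precisely the chain $L_0\rightsquigarrow L_1\rightsquigarrow L_2\rightsquigarrow L_3\equiv\widetilde L$ laid out in Steps~1--3 together with the self-improvement of Step~1; you have identified all the key ingredients (the representation formula, the discrete tent-space pairing of Lemma~\ref{tentspaces}, the absorption via the qualitative $j$-dependent finiteness, Lemma~\ref{lemmab7}, the sawtooth Lemma~\ref{sawtoothlemma}, and the thin-collar comparison via Lemma~\ref{comp3}) in the correct order and with the correct dependencies.
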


\medskip

\subsection{Step 4}
What we have proved so far does not allow us to apply Lemma \ref{extrapolation}. We have to be able to fix the pole relative to $Q_0$, and show that \eqref{concpaso33} also holds for all $Q\in\mathbb{D}_{Q_0}$.

\begin{proposition}
Let $\varepsilon_0$ be the parameter obtained in Proposition \ref{prop:Step 3}. Given $\varepsilon\in(0,1)$, there exists $C_\varepsilon>1$ such that for every $Q_0\in\mathbb{D}(\partial\Omega)$ with $\ell(Q_0)<\diam(\partial\Omega)/M_0$, every $Q\in\mathbb{D}_{Q_0}$, every $\mathcal{F}=\{Q_i\}\subset\mathbb{D}_{Q}$ with $\|\mathfrak{m}_\mathcal{F}\|_{\mathcal{C}(Q)}\leq\varepsilon_0$, there holds
\begin{equation}\label{propstep4}
E\subset Q,\quad\frac{\sigma(E)}{\sigma(Q)}\geq\varepsilon\implies\frac{\mathcal{P}_\mathcal{F}\omega_{\widetilde{L}}^{X_{Q_0}}(E)}{\mathcal{P}_\mathcal{F}\omega_{\widetilde{L}}^{X_{Q_0}}(Q)}\geq\frac{1}{C_\varepsilon},
\end{equation}
where $\widetilde{L}=L^j$ is the operator defined in \eqref{definicionLtilde} and $j\in\mathbb{N}$ is arbitrary. Consequently, there exists $1<q<\infty$ such that $\omega_{\widetilde{L}}^{X_{Q_0}}\in RH_q^{\rm dyadic}(Q_0)$ uniformly in $Q_0\in\mathbb{D}(\partial\Omega)$ provided $\ell(Q_0)<\diam(\partial\Omega)/M_0$, and moreover $\omega_{\widetilde{L}}\in RH_q(\partial\Omega)$
.
\end{proposition}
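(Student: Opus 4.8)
The plan is to first upgrade the pole-change estimate \eqref{concpaso33} to \eqref{propstep4} for an arbitrary $Q\in\mathbb{D}_{Q_0}$, and then feed this into the extrapolation Lemma \ref{extrapolation}. Fix $Q\in\mathbb{D}_{Q_0}$ and $\mathcal{F}=\{Q_i\}\subset\mathbb{D}_Q$ with $\|\mathfrak{m}_\mathcal{F}\|_{\mathcal{C}(Q)}\le\varepsilon_0$. Since $\ell(Q)\le\ell(Q_0)<\diam(\partial\Omega)/M_0$, Proposition \ref{prop:Step 3} applies verbatim with $Q$ in the role of $Q_0$, giving for each $\varepsilon\in(0,1)$ a constant $C_\varepsilon>1$ with $E\subset Q$, $\sigma(E)/\sigma(Q)\ge\varepsilon\Rightarrow\mathcal{P}_\mathcal{F}\omega_{\widetilde L}^{X_Q}(E)/\mathcal{P}_\mathcal{F}\omega_{\widetilde L}^{X_Q}(Q)\ge 1/C_\varepsilon$. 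It then remains to transfer the pole from $X_Q$ to $X_{Q_0}$. Since $\omega_{\widetilde L}\ll\sigma$ (Step 0), Remark \ref{dyadicchangepole} gives $k_{\widetilde L}^{X_{Q_0}}(y)\approx\omega_{\widetilde L}^{X_{Q_0}}(Q)\,k_{\widetilde L}^{X_Q}(y)$ for $\sigma$-a.e.\ $y\in Q$; integrating this comparison over $F\setminus\bigcup_iQ_i$ and over each $Q_i$ (all subsets of $Q$) in the definition \eqref{defprojection} of $\mathcal{P}_\mathcal{F}$ yields $\mathcal{P}_\mathcal{F}\omega_{\widetilde L}^{X_{Q_0}}(F)\approx\omega_{\widetilde L}^{X_{Q_0}}(Q)\,\mathcal{P}_\mathcal{F}\omega_{\widetilde L}^{X_Q}(F)$ for every $F\subset Q$, with implicit constants depending only on the allowable parameters. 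The factor $\omega_{\widetilde L}^{X_{Q_0}}(Q)$ cancels in the ratio, so \eqref{propstep4} follows with a possibly larger $C_\varepsilon$.

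With \eqref{propstep4} in hand, apply Lemma \ref{extrapolation} with $E=\partial\Omega$, the pair $\sigma$ and $\omega=\omega_{\widetilde L}^{X_{Q_0}}$, and the discrete Carleson measure $\mathfrak{m}$ of \eqref{coefcarleson}. Both measures are dyadically doubling on $Q_0$: $\sigma$ by Ahlfors regularity, and $\omega_{\widetilde L}^{X_{Q_0}}$ by the standard argument via Lemma \hyperref[proppde]{\ref*{proppde}(b)}, Lemma \ref{bourgain}, and Harnack's inequality. Since $\ell(Q_0)<\diam(\partial\Omega)/M_0<\diam(\partial\Omega)/\kappa_0$, Lemma \ref{carlesondisc} gives $\|\mathfrak{m}\|_{\mathcal{C}(Q_0)}\le\kappa\vertiii{\varrho(A,A_0)}<\infty$; note that the discrepancy of $\widetilde A=A^j$ with $A_0$ is pointwise dominated by $|A-A_0|$, so this same $\mathfrak{m}$ controls it for every $j$. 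Taking $\gamma=\varepsilon_0$, the hypothesis of Lemma \ref{extrapolation} is precisely \eqref{propstep4}, so we conclude $\omega_{\widetilde L}^{X_{Q_0}}\in A_\infty^{\rm dyadic}(Q_0)$ with constants depending only on dimension, $p$, the $1$-sided $\mathrm{CAD}$ constants, the ellipticity of $L_0$ and $L$, $\vertiii{\varrho(A,A_0)}$, and the constant in $\omega_{L_0}\in RH_p(\partial\Omega)$ --- in particular independent of $Q_0$ and of $j$. Hence there is $1<q<\infty$, uniform in $Q_0$ and $j$, with $\omega_{\widetilde L}^{X_{Q_0}}\in RH_q^{\rm dyadic}(Q_0)$.

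Finally, to obtain $\omega_{\widetilde L}\in RH_q(\partial\Omega)$, by Remark \ref{obsrhpq0} it suffices to verify \eqref{reverseholderLdyadic} for every $Q\in\mathbb{D}(\partial\Omega)$. If $\ell(Q)<\diam(\partial\Omega)/M_0$, then $\omega_{\widetilde L}^{X_Q}\in RH_q^{\rm dyadic}(Q)$ by the previous paragraph; applying this with $Q$ itself and using $\omega_{\widetilde L}^{X_Q}(Q)\le 1$ gives $\int_Q k_{\widetilde L}^{X_Q}(y)^q\,d\sigma(y)\lesssim\sigma(Q)^{1-q}$. If $\partial\Omega$ is bounded and $\diam(\partial\Omega)/M_0\le\ell(Q)\approx\diam(\partial\Omega)$, split $Q$ into the bounded number (controlled by $M_0^n$) of dyadic descendants $Q_l$ with $\ell(Q_l)<\diam(\partial\Omega)/M_0$ and $\ell(Q_l)\approx\diam(\partial\Omega)$, apply the preceding estimate to each $Q_l$, and use Harnack's inequality to replace $X_{Q_l}$ by $X_Q$ (both at height comparable to $\diam(\partial\Omega)$ and at mutual distance $\lesssim\diam(\partial\Omega)$); summing over the $Q_l$ recovers \eqref{reverseholderLdyadic} for $Q$. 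Thus \eqref{reverseholderLdyadic} holds with $q$ and the constant independent of $j$, and Remark \ref{obsrhpq0} yields $\omega_{\widetilde L}\in RH_q(\partial\Omega)$, uniformly in $j$.

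The main technical point is the term-by-term pole change for the projected measure, i.e.\ establishing $\mathcal{P}_\mathcal{F}\omega_{\widetilde L}^{X_{Q_0}}(F)\approx\omega_{\widetilde L}^{X_{Q_0}}(Q)\,\mathcal{P}_\mathcal{F}\omega_{\widetilde L}^{X_Q}(F)$; everything else is bookkeeping once Proposition \ref{prop:Step 3} and Lemma \ref{extrapolation} are in place, the key feature being that all constants (in particular $\varepsilon_0$ and the resulting $q$) are independent of $j$, which is exactly what Step 5 will need.
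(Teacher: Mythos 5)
Your argument is correct and follows essentially the same route as the paper: apply Proposition \ref{prop:Step 3} with $Q$ in place of $Q_0$, transfer the pole from $X_Q$ to $X_{Q_0}$ term by term in the projection via Remark \ref{dyadicchangepole} (the paper phrases this by normalizing with $\mathcal{P}_\mathcal{F}\omega_{\widetilde L}^{X_Q}(Q)=\omega_{\widetilde L}^{X_Q}(Q)\approx 1$ rather than cancelling the factor $\omega_{\widetilde L}^{X_{Q_0}}(Q)$, but the computation is the same), then invoke Lemma \ref{extrapolation} together with Lemma \ref{carlesondisc}, and finally handle cubes of sidelength comparable to $\diam(\partial\Omega)$ by the same bounded decomposition and Harnack pole shift. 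No gaps.
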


\begin{proof}
Fix $Q_0\in\mathbb{D}(\partial\Omega)$ with $\ell(Q_0)<\diam(\partial\Omega)/M_0$. Let $0<\varepsilon<1$, $Q\in\mathbb{D}_{Q_0}$. Let $\mathcal{F}=\{Q_i\}\subset\mathbb{D}_{Q}$ be such that $\|\mathfrak{m}_\mathcal{F}\|_{\mathcal{C}(Q)}\leq\varepsilon_0$ and let $E\subset Q$ satisfy $\sigma(E)\geq\varepsilon\sigma(Q)$. By Lemma \hyperref[proppde]{\ref*{proppde}(c)} (see also Remark \ref{dyadicchangepole}) and the fact that $\mathcal{P}_\mathcal{F}\omega_{\widetilde{L}}^{X_Q}(Q)=\omega_{\widetilde{L}}^{X_Q}(Q)\approx 1$ by Lemma \ref{bourgain}, we see that
\begin{multline*}
\frac{\mathcal{P}_{\mathcal{F}}\omega_{\widetilde{L}}^{X_{Q_0}}(E)}{\mathcal{P}_{\mathcal{F}}\omega_{\widetilde{L}}^{X_{Q_0}}(Q)}
=
\frac{\mathcal{P}_{\mathcal{F}}\omega_{\widetilde{L}}^{X_{Q_0}}(E)}{\omega_{\widetilde{L}}^{X_{Q_0}}(Q)}
\approx
\omega_{\widetilde{L}}^{X_Q}\Big(E\setminus\bigcup_{Q_i\in\mathcal{F}}Q_i\Big)
+\sum_{Q_i\in\mathcal{F}}\frac{\sigma(E\cap Q_i)}{\sigma(Q_i)}\,\omega_{\widetilde{L}}^{X_Q}(Q_i)
\\
=
\mathcal{P}_\mathcal{F}\omega_{\widetilde{L}}^{X_Q}(E)
\approx
\frac{\mathcal{P}_\mathcal{F}\omega_{\widetilde{L}}^{X_Q}(E)}{\mathcal{P}_\mathcal{F}\omega_{\widetilde{L}}^{X_Q}(Q)}\geq\frac{1}{C_\varepsilon},
\end{multline*}
where in the last inequality we have applied Proposition \ref{prop:Step 3} to $Q$ (replacing $Q_0$) satisfying
$\ell(Q)<\diam(\partial\Omega)/M_0$. This shows \eqref{propstep4}, which together with Lemma \ref{carlesondisc} and our choice of $M_0$, allows us to invoke Lemma \ref{extrapolation} and eventually conclude that $\omega_{\widetilde{L}}^{X_{Q_0}}\in A_\infty^{\rm dyadic}(Q_0)$.

We have then proved that $\omega_{\widetilde{L}}^{X_{Q_0}}\in A_\infty^{\rm dyadic}(Q_0)$ uniformly in $Q_0$, provided  $\ell(Q_0)<\diam(\partial\Omega)/M_0$. Thus, there exists $1<q<\infty$, such that $\omega_{\widetilde{L}}^{X_{Q_0}}\in RH_q^{\rm dyadic}(Q_0)$ uniformly in $Q_0$ for the same class of cubes and, in particular,
\begin{equation}\label{eq:step4-end}
\int_{Q_0}k_{\widetilde{L}}^{X_{Q_0}}(y)^q\,d\sigma(y)
\lesssim
\sigma(Q_0)^{1-q},
\qquad
Q_0\in\mathbb{D}(\partial\Omega),\quad \ell(Q_0)<\frac{\diam(\partial\Omega)}{M_0}.
\end{equation}

When $\diam(\partial\Omega)<\infty$, we need to extend the previous estimate to all cubes with sidelength of the order of $\diam(\partial\Omega)$. Let us then take $Q_0\in\mathbb{D}(\partial\Omega)$ with $\ell(Q_0)\geq\diam(\partial\Omega)/M_0$ and define the collection
$$
\mathcal{I}_{Q_0}=\Big\{Q\in\mathbb{D}_{Q_0}:\:\frac{\diam(\partial\Omega)}{2M_0}\leq\ell(Q)<\frac{\diam(\partial\Omega)}{M_0}\Big\}.
$$
Note that $Q_0=\bigcup_{Q\in\mathcal{I}_{Q_0}}Q$ is a disjoint union and using the $\mathrm{AR}$ property we have that
$$
\#\mathcal{I}_{Q_0}\bigg(\frac{\diam(\partial\Omega)}{2M_0}\bigg)^n
\leq
\sum_{Q\in\mathcal{I}_{Q_0}}\ell(Q)^n
\approx
\sum_{Q\in\mathcal{I}_{Q_0}} \sigma(Q)
=
\sigma(Q_0)
\approx
\ell(Q_0)^n
\lesssim
\diam(\partial\Omega)^n,
$$
which implies $\#\mathcal{I}_{Q_0}\lesssim M_0^n$. We can use Harnack's inequality to move the pole from $X_{Q_0}$ to $X_{Q}$ for any $Q\in \mathcal{I}_{Q_0}$ (with constants depending on $M_0$, which is already fixed), since $\delta(X_{Q_0})\approx\ell(Q_0)>\ell(Q)$, $\delta(X_{Q})\approx\ell(Q)$ and $|X_{Q_0}-X_{Q}|\lesssim M_0\ell(Q)$. Hence, we obtain
\begin{multline*}
\int_{Q_0}k_{\widetilde{L}}^{X_{Q_0}}(y)^q\,d\sigma(y)
\approx
\sum_{Q\in\mathcal{I}_{Q_0}}\int_{Q}k_{\widetilde{L}}^{X_{Q}}(y)^q\,d\sigma(y)
\lesssim
\sum_{Q\in\mathcal{I}_{Q_0}}\sigma(Q)^{1-q}
\\
\lesssim
\#\mathcal{I}_{Q_0}\diam(\partial\Omega)^{(1-q)n}\lesssim\sigma(Q_0)^{1-q},
\end{multline*}
where we have used \eqref{eq:step4-end} for $Q$ since  $\ell(Q)<\diam(\partial\Omega)/M_0$, and the AR property.
Therefore, we have extended \eqref{eq:step4-end} to all $Q_0\in\mathbb{D}(\partial\Omega)$ and
Remark \ref{obsrhpq0} yields that $\omega_{\widetilde{L}}\in RH_q(\partial\Omega)$, where $\widetilde{L}=L^j$ and the implicit constants are independent of $j\in\mathbb{N}$.
\end{proof}

\medskip

\subsection{Step 5}
In the previous step we have proved that $\omega_{\widetilde{L}}\in RH_q(\partial\Omega)$ where $\widetilde{L}=L^j$ and the implicit constants are all uniform in $j$. To complete the proof of Theorem \hyperref[perturbationtheo]{\ref*{perturbationtheo}(a)} we show that $\omega_L\in RH_q(\partial\Omega)$ using the following result:

\begin{proposition}\label{prop:limiting}
Let $\Omega\subset\re^{n+1}$, $n\ge 2$, be a 1-sided $\mathrm{CAD}$. Let $L$ and $L_0$ be real symmetric elliptic operators with matrices $A$ and $A_0$ respectively. For every $j\in\mathbb{N}$, let $L^ju= -\div(A^j \nabla u)$, with $A^j(Y)=A(Y)$ if $\delta(Y)\geq 2^{-j}$ and $A^j(Y)=A_0(Y)$  if $\delta(Y)< 2^{-j}$. Assume that there exists $1<q<\infty$ such that  $\omega_{L^j}=\omega_{L^j,\Omega}\in RH_q(\partial\Omega)$ uniformly in $j$, for every $j\ge j_0$. That is, $\omega_{L^j,\Omega}\ll\sigma$ and
there exists $C$ such that
\begin{equation}\label{rhplj}
\int_{\Delta}k_{L^j,\Omega}^{X_\Delta}(y)^q\,d\sigma(y)\leq C\sigma(\Delta)^{1-q},\qquad k_{L^j,\Omega}^{X_\Delta}:=d\omega_{L^j,\Omega}^{X_\Delta}/d\sigma,
\end{equation}
for every $j\ge j_0$ and every $\Delta(x,r)$ with $x\in\partial\Omega$ and $0<r<\diam(\partial\Omega)$.
Then  $\omega_{L,\Omega}\in RH_q(\partial\Omega)$.
\end{proposition}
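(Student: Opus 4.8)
The plan is to pass to the limit $j\to\infty$ in the reverse Hölder estimates \eqref{rhplj}, exploiting that $A^j\to A$ pointwise a.e.\ in $\Omega$ (in fact $A^j=A$ on the fixed compact set $\{\delta\geq 2^{-j}\}$) to identify the limit of the elliptic measures $\omega_{L^j}$ with $\omega_L$, and then using the uniform-in-$j$ bound \eqref{rhplj} together with weak-$*$ compactness and a lower-semicontinuity/Fatou argument to transfer the $RH_q$ bound to the limit kernel. First I would fix $x\in\partial\Omega$ and $0<r<\diam(\partial\Omega)$, set $\Delta=\Delta(x,r)$, and work with the corkscrew point $X_\Delta$, which can be taken independent of $j$ (for $j$ large, $\delta(X_\Delta)\geq cr>2^{-j}$, so $A^j=A$ in a fixed ball around $X_\Delta$). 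The key convergence input is that $\omega_{L^j}^{X_\Delta}\to\omega_L^{X_\Delta}$ weakly-$*$ as Borel measures on $\partial\Omega$; this is standard from the pointwise convergence $A^j\to A$ a.e.\ together with uniform ellipticity, via convergence of the corresponding solutions (by Caccioppoli plus the De Giorgi--Nash--Moser estimates one gets local uniform convergence of the solutions $u^j(Y)=\int f\,d\omega_{L^j}^Y$ for $f\in C_c(\partial\Omega)$, hence convergence of the measures tested against $C_c(\partial\Omega)$; this is exactly the type of stability statement recorded in \cite{hofmartellgeneral}).

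Next I would upgrade weak-$*$ convergence to the absolute-continuity and reverse-Hölder conclusion. Since $\omega_{L^j}^{X_\Delta}\ll\sigma$ with densities $k_{L^j}^{X_\Delta}$ satisfying the uniform bound $\int_{\Delta'}(k_{L^j}^{X_\Delta})^q\,d\sigma\lesssim\sigma(\Delta')^{1-q}$ for all surface balls $\Delta'\subset\Delta$ (here I use that \eqref{rhplj}, after a change of pole via Lemma \hyperref[proppde]{\ref*{proppde}(c)} and Harnack, gives a genuine $RH_q$ family), the densities $k_{L^j}^{X_\Delta}$ are bounded in $L^q_{\mathrm{loc}}(\Delta,\sigma)$, hence (after passing to a subsequence) converge weakly in $L^q$ to some $k\in L^q$. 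The weak-$*$ limit of $\omega_{L^j}^{X_\Delta}$ must then equal $k\,d\sigma$ on $\Delta$; combined with the identification of the limit as $\omega_L^{X_\Delta}$, this yields $\omega_L^{X_\Delta}\ll\sigma$ on $\Delta$ with $k_L^{X_\Delta}=k$. Finally, weak $L^q$ convergence and lower semicontinuity of the $L^q$ norm give $\int_{\Delta}(k_L^{X_\Delta})^q\,d\sigma\leq\liminf_j\int_{\Delta}(k_{L^j}^{X_\Delta})^q\,d\sigma\lesssim\sigma(\Delta)^{1-q}$, which is precisely \eqref{reverseholderL}; by Theorem \ref{solvability} (equivalence of $(b)$ and $(c)$) this means $\omega_{L,\Omega}\in RH_q(\partial\Omega)$.

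The main obstacle I anticipate is the rigorous justification of the weak-$*$ convergence $\omega_{L^j}^{X_\Delta}\to\omega_L^{X_\Delta}$ on all of $\partial\Omega$ (not merely locally), and of the attendant identification of limits, since $\Omega$ need not be bounded and the elliptic measures are constructed via the approximating scheme $\{\mathcal{T}_k\}$ of \cite{hofmartellgeneral}. I would handle this by the same two-parameter limiting device already used in the proof of Lemma \ref{proprepresent}: first work in a bounded subdomain $\mathcal{T}_k$ large enough to contain $X_\Delta$ and the relevant portion of $\partial\Omega$, where $A^j\to A$ forces $\omega_{L^j,\mathcal{T}_k}^{X_\Delta}\to\omega_{L,\mathcal{T}_k}^{X_\Delta}$ by the standard bounded-domain stability theory, and then let $k\to\infty$ using the monotone approximation of $\omega_{L}$ by $\omega_{L,\mathcal{T}_k}$ from \cite{hofmartellgeneral}, commuting the two limits with the help of the uniform bound \eqref{rhplj} (which is what provides the equi-integrability needed to exchange limits). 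A secondary, purely bookkeeping point is that \eqref{rhplj} is stated with pole $X_\Delta$ varying with $\Delta$; to run the weak-$L^q$ argument at a fixed pole on a fixed ball one changes poles via Lemma \hyperref[proppde]{\ref*{proppde}(c)} and Harnack's inequality, at the cost of harmless constants, exactly as in Remark \ref{obsrhpq0}.
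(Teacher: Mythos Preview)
Your overall strategy---weak-$*$ convergence of the elliptic measures, weak $L^q$ compactness of the densities, and lower semicontinuity of the $L^q$ norm---is sound and is a genuinely different route from the paper's. The paper instead localizes to the bounded 1-sided CAD $\Omega_\star=T_{20\Delta_0}$, uses Lemma \ref{proprepresentacotado} (the representation formula) together with the $W^{1,2}$ bound on $G_{L,\Omega_\star}$ to prove $u_j(X_{4\Delta_0})\to u(X_{4\Delta_0})$ directly, and then runs a duality/Riesz representation argument to produce $h\in L^q$ and identify it with $k_L$. Your weak compactness approach is arguably more streamlined once convergence of the measures is in hand: lower semicontinuity of the $L^q$ norm replaces the paper's more explicit construction of $\widetilde\Phi$ and the subsequent Urysohn/density argument to recover $\omega_L\ll\sigma$.

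The real gap in your proposal is the handling of unbounded $\Omega$. You propose to commute the limits $j\to\infty$ and $k\to\infty$ in the $\mathcal{T}_k$ approximation ``with the help of the uniform bound \eqref{rhplj}'', but \eqref{rhplj} is stated for $\omega_{L^j,\Omega}$, not for $\omega_{L^j,\mathcal{T}_k}$, and there is no direct comparison lemma between these two (Lemma \hyperref[proppde]{\ref*{proppde}(d)} compares $\omega_{L,\Omega}$ with $\omega_{L,T_{\Delta_0}}$, not with $\omega_{L,\mathcal{T}_k}$). So the equi-integrability you invoke is not actually available in the domains where you want to take the first limit. The paper's device avoids this entirely: for each surface ball $\Delta_0$ it passes to the bounded subdomain $\Omega_\star=T_{20\Delta_0}$, where Lemma \hyperref[proppde]{\ref*{proppde}(d)} \emph{does} transfer \eqref{rhplj} (giving the analogue of \eqref{ecimpor}), and where convergence of solutions is routine. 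If you adopt this same localization---replace your $\mathcal{T}_k$ scheme by $\Omega_\star=T_{20\Delta_0}$---your weak-$*$/weak-$L^q$ argument goes through cleanly in the bounded domain, and Lemma \hyperref[proppde]{\ref*{proppde}(d)} transfers the conclusion back to $\Omega$. That single change fixes the gap.
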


\begin{proof}
Fix $B_0=B(x_0,r_0)$ with $x_0\in\partial\Omega$ and $0<r_0<\diam(\partial\Omega)/25$, set $\Delta_0=B_0\cap\partial\Omega$, and consider the subdomain $\Omega_\star:=T_{20\Delta_0}$. Using \cite[Lemma 3.61]{hofmartell} we know that $\Omega_\star$ is a bounded 1-sided $\mathrm{CAD}$, with constants depending only on those of $\Omega$. Applying Lemma \hyperref[proppde]{\ref*{proppde}(d)} it follows that $\omega_{L^j, \Omega_\star}\ll\sigma$ in $4\Delta_0$ and also
$$
k_{L^j,\Omega}^{X_{4\Delta_0}}(y)\approx k_{L^j,\Omega_\star}^{X_{4\Delta_0}}(y),\quad\text{ for }\sigma\text{-a.e. }y\in 4\Delta_0.
$$
Recalling \eqref{definicionkappa0} we know that $25 B_0\cap\Omega\subset \Omega_\star$. In particular,
$10B_0\cap\partial\Omega= 10 B_0\cap\partial\Omega_\star$ and $\sigma_\star:=H^n\rest{\partial\Omega_\star}$ coincides with $\sigma$ in $4\Delta_0$. Therefore, \eqref{rhplj} gives
\begin{equation}\label{ecimpor}
\int_{4\Delta_0}k_{L^j,\Omega_\star}^{X_{4\Delta_0}}(y)^q\,d\sigma_\star(y)\approx
\int_{4\Delta_0}k_{L^j,\Omega}^{X_{4\Delta_0}}(y)^q\,d\sigma(y)\lesssim\sigma(\Delta_0)^{1-q}
\end{equation}
uniformly in $j\in\mathbb{N}$. Note also that $\delta_{\star}(X_{4\Delta_0})=\delta(X_{4\Delta_0})$, where $\delta_{\star}(Y)=\dist(Y,\partial\Omega_{\star})$:
$$
\delta_\star(X_{4\Delta_0})
=
\dist(X_{4\Delta_0},10 B_0\cap \partial\Omega_\star)
=
\dist(X_{4\Delta_0},10 B_0\cap \partial\Omega)=\delta(X_{4\Delta_0}).
$$

Define, for every $g\in C_c (\partial\Omega_\star)$
$$
\Phi(g)
:=
\int_{\partial\Omega\star}g(y)\,d\omega_{L,\Omega_\star}^{X_{4\Delta_0}}(y).
$$
Let $g\in \mathrm{Lip}_c(\partial\Omega)$ be such that $\supp(g)\subset 4\Delta_0$ and extend $g$ by zero to $\partial\Omega_\star\setminus 4\Delta_0$ (by a slight abuse of notation we will call the extension $g$) so that $g\in \mathrm{Lip}_c(\partial\Omega_\star)$ and define
$$
u(X)=\int_{\partial\Omega_\star}g(y)\,d\omega_{L,\Omega_\star}^X(y),\qquad u_j(X)=\int_{\partial\Omega_\star}g(y)\,d\omega_{L^j,\Omega_\star}^X(y),\qquad X\in\Omega_\star.
$$
Since $g\in \mathrm{Lip}_c(\partial\Omega_\star)\subset H^{1/2}(\partial\Omega_\star)\cap C_c(\partial\Omega_\star)$, using Lemma \ref{proprepresentacotado} with $\Omega_\star$ and slightly moving $X_{4\Delta_0}$ if needed, we can write
$$
u(X_{4\Delta_0})-u_j(X_{4\Delta_0})=\iint_{\Omega_\star}(A^j-A)(Y)\nabla_Y G_{L,\Omega_\star}(X_{4\Delta_0},Y)\cdot\nabla u_j(Y)\,dY.
$$
Set $\Sigma_j:=\{Y\in\Omega:\,\delta(Y)<2^{-j}\}$, $\widehat{B}_0:=B(X_{4\Delta_0},\delta(X_{4\Delta_0})/2)$ take $j_1\geq j_0$ large enough so that $\widehat{B}_0\cap \Sigma_{j_1}=\emptyset$. For every $j\ge j_1$, it is clear that $|A^j-A|\lesssim\mathbf{1}_{\Sigma_j}$, with constants depending only on the ellipticity of $L_0$ and $L$. Also we have the a priori estimate $\|\nabla u_j\|_{L^2(\Omega_\star)}\lesssim\|g\|_{H^{1/2}(\partial\Omega_\star)}$ (see \cite{hofmartellgeneral}), where the implicit constant  depends on dimension, the $\mathrm{AR}$ constant, the ellipticity of $L_0$ and $L$, and also of $\diam(\partial\Omega_\star)\approx r_0$). All these and H\"older's inequality yield
\begin{align}\label{difuju}
|u(X_{4\Delta_0})-u_j(X_{4\Delta_0})|
&\lesssim
\iint_{\Omega_\star\cap\Sigma_j}|\nabla_Y G_{L,\Omega_\star}(X_{4\Delta_0},Y)||\nabla u_j(Y)|\,dY
\\
&\lesssim
\|\nabla G_{L,\Omega_\star}(X_{4\Delta_0},\cdot)\mathbf{1}_{\Sigma_j}\|_{L^2(\Omega_\star\setminus \widehat{B}_0)}\|g\|_{H^{1/2}(\partial\Omega_\star)}.\nonumber
\end{align}
Since $\Omega_\star$ is bounded, our Green function coincides with the one defined in \cite{gruterwidman}, hence $\nabla G_{L,\Omega_\star}(X_{4\Delta_0},\cdot)\in L^2(\Omega_\star\setminus \widehat{B}_0)$ (see \eqref{obsgreen1}).
Using the dominated convergence theorem, the first factor of the right hand side of \eqref{difuju} tends to zero, hence $u_j(X_{4\Delta_0})\rightarrow u(X_{4\Delta_0})$.
Recalling then \eqref{ecimpor} we have that
$$
|u(X_{4\Delta_0})|
=\lim_{j\to\infty} |u_j(X_{4\Delta_0})|
\leq
\|g\|_{L^{q'}(4\Delta_0)}\sup_{j\in\mathbb{N}}\|k_{L^j,\Omega_\star}^{X_{4\Delta_0}}\|_{L^q(4\Delta_0)}
\\
\lesssim\|g\|_{L^{q'}(4\Delta_0)}\sigma(\Delta_0)^{-1/q'}.
$$
and hence
\begin{equation}\label{eq:phi-g-cont}
|\Phi(g)|\lesssim
\|g\|_{L^{q'}(4\Delta_0)}\sigma(\Delta_0)^{-1/q'},
\qquad
 g\in \mathrm{Lip}_c(\partial\Omega),\quad \supp(g)\subset 4\Delta_0.
\end{equation}

Suppose now that $g\in L^{q'}(2\Delta_0)$ is such that $\supp(g)\subset 2\Delta_0$, and for $0<t< r_0$ set $g_t=P_t g$ with $P_t$ as in Lemma \ref{densidad}. Since $g_t\in\mathrm{Lip}(\partial\Omega)$ satisfies $\supp(g_t)\subset 4\Delta_0$, we have by \eqref{eq:phi-g-cont}
\begin{align*}
|\Phi(g_t)-\Phi(g_s)|
&=|\Phi(g_t-g_s)|\lesssim\|g_t-g_s\|_{L^{q'}(4\Delta_0)}\sigma(\Delta_0)^{-1/q'}
\\
&\lesssim
\sigma(\Delta_0)^{-1/q'}\big(\|P_tg-g\|_{L^{q'}(\partial\Omega)}+\|P_sg-g\|_{L^{q'}(\partial\Omega)}\big)
\end{align*}
for $0<t,s<r_0$. Hence $\{\Phi(g_t)\}_{t>0}$ is a Cauchy sequence, and we can define $\widetilde{\Phi}(g):=\lim_{t\to 0}\Phi(g_t)$. Clearly, $\widetilde{\Phi}$ is a well-defined linear operator and $\widetilde{\Phi}\in L^{q'}(2\Delta_0)^*$:
\begin{equation}\label{eq:est-phitilde}
|\widetilde{\Phi}(g)|\leq\sup_{0<t<r_0}|\Phi(g_t)|\lesssim\sigma(\Delta_0)^{-1/q'}\sup_{0<t<r_0}\|P_tg\|_{L^{q'}(4\Delta_0)}\lesssim\sigma(\Delta_0)^{-1/q'}\|g\|_{L^{q'}(2\Delta_0)},
\end{equation}
where we have used \eqref{eq:phi-g-cont} and Lemma \ref{densidad}. Consequently, there exists $h\in L^q(2\Delta_0)$ with $\|h\|_{L^q(2\Delta_0)}\lesssim\sigma(\Delta_0)^{-1/q'}$ in such a way that $\widetilde{\Phi}(g)=\int_{2\Delta_0}g(y)h(y)\,d\sigma(y)$ for every $g\in L^{q'}(2\Delta_0)$ such that $\supp(g)\subset 2\Delta_0$.

Let $g\in C_c (\partial\Omega)$ with $\supp(g)\subset 2\Delta_0$  and we extend $g$ by zero to $\partial\Omega_\star$ so that that $g\in C_c (\partial\Omega_\star)$. From Lemma \ref{densidad} applied to $\Omega_\star$,
$\|P_t g\|_{L^\infty(\partial\Omega_\star)}\leq\|g\|_{L^{\infty}(2\Delta_0)}$ and $P_tg(x)\rightarrow g(x)$ as $t\to 0^+$ for every $x\in\partial\Omega_\star$. These, the definition of $\widetilde{\Phi}(g)$ and the dominated convergence theorem with respect to $\omega_{L,\Omega_\star}^{X_{4\Delta_0}}$, shows
\begin{equation}\label{eq:phi-phitilde}
\widetilde{\Phi}(g)
=
\lim_{t\to 0^+} \Phi(P_t g)
=
\lim_{t\to 0^+}\int_{\partial\Omega_\star}P_tg(y)\,d\omega_{L,\Omega_\star}^{X_{4\Delta_0}}(y)=\int_{\partial\Omega_\star}g(y)\,d\omega_{L,\Omega_\star}^{X_{4\Delta_0}}(y)=\Phi(g),
\end{equation}
hence $\widetilde{\Phi}(g)=\Phi(g)$ for every $g\in C_c (\partial\Omega)$ with $\supp(g)\subset 2\Delta_0$.

Next, we see that $\widehat{\omega}:=\omega_{L,\Omega_\star}^{X_{4\Delta_0}} \ll\sigma_\star=\sigma$ in $\tfrac{5}{4}\Delta_0$. Let $E\subset\tfrac{5}{4}\Delta_0$ and let $\varepsilon>0$. Since $\widehat{\omega}$ and $\sigma$ are both regular measures, there exist $K\subset E\subset U \subset\tfrac{3}{2}\Delta_0$ with $K$ compact and $U$ open such that $\widehat{\omega}(U\setminus K)+\sigma(U\setminus K)<\varepsilon$. Using Urysohn's lemma we construct $g\in C_c(\partial\Omega)$ such that $\mathbf{1}_K\leq g\leq\mathbf{1}_U$ and $\supp(g)\subset 2\Delta_0$. Thus, by  \eqref{eq:phi-phitilde} and \eqref{eq:est-phitilde},
\begin{multline*}
\widehat{\omega}(E)
\le
\varepsilon+\widehat{\omega}(K)
\leq
\varepsilon
+\int_{\partial\Omega_\star}g(y)\,d\widehat{\omega}(y)
=
\varepsilon+\Phi(g)
=
\varepsilon+\widetilde{\Phi}(g)
\\
\leq
\varepsilon+\|g\|_{L^{q'}(2\Delta_0)}\|h\|_{L^q(2\Delta_0)}\lesssim \varepsilon+(\varepsilon+\sigma(E))^{1/q'}\sigma(\Delta_0)^{-1/q'}.
\end{multline*}
Letting $\varepsilon\to 0^+$ we conclude that $\widehat{\omega}(E)\lesssim \sigma(E)^{1/q'}\sigma(\Delta_0)^{-1/q'}$ and in particular $\widehat{\omega}\ll\sigma$  in $\tfrac{5}{4}\Delta_0$. Writing then $\widehat{k}=d\widehat{\omega}/d\sigma\in L^1(\tfrac{5}{4}\Delta_0)$ we have that
\begin{equation}\label{eq:dinal-density}
\int_{\frac54 \Delta_0}g(y)h(y)\,d\sigma(y)
=
\widetilde{\Phi}(g)
=
\Phi(g)
=
\int_{\partial\Omega_\star}g(y)\,d\widehat{\omega}(y)
=
\int_{\frac54 \Delta_0}g(y)\,\widehat{k}(y)\, d\sigma(y),
\end{equation}
for every $g\in C_c(\partial\Omega)$ with $\supp(g)\subset \frac54\,\Delta_0$.
Since $(h-\widehat{k})\mathbf{1}_{\frac54 \Delta_0}\in L^1(\partial\Omega)$ by Lemma \ref{densidad} it follows that $P_t((h-\widehat{k})\mathbf{1}_{\frac54 \Delta_0})\longrightarrow (h-\widehat{k})\mathbf{1}_{\frac54 \Delta_0}$ in $L^1(\partial\Omega)$ as $t\to 0^+$. Moreover, for any $x\in \Delta_0$, if we let $0<t<r_0/8$ so that $\supp(\varphi_t(x,\cdot))\subset \frac54\,\Delta_0$, then \eqref{eq:dinal-density} applied to $g=\varphi_t(x,\cdot)$
yields that $P_t((h-\widehat{k})\mathbf{1}_{\frac54 \Delta_0})(x)=0$. All these allow to conclude that $\widehat{k}=h$ $\sigma$-a.e.~in $\Delta_0$, hence $\|\widehat{k}\|_{L^q(\Delta_0)}\leq\|h\|_{L^q(2\Delta_0)}\lesssim\sigma(\Delta_0)^{-1/q'}$.

Note that we showed before that $\widehat{\omega}:=\omega_{L,\Omega_\star}^{X_{4\Delta_0}} \ll\sigma$ in $\Delta_0$, Lemma \hyperref[proppde]{\ref*{proppde}(d)} and Harnack's inequality give $\omega_{L,\Omega}^{X_{\Delta_0}}\ll\sigma$ in $\Delta_0$, and
$$
\int_{\Delta_0}k_{L,\Omega}^{X_{\Delta_0}}(y)^q\,d\sigma(y)\approx\int_{\Delta_0}k_{L,\Omega_\star}^{X_{\Delta_0}}(y)^q\,d\sigma(y)\approx\int_{\Delta_0}\widehat{k}(y)^q\,d\sigma(y)\lesssim\sigma(\Delta_0)^{1-q},
$$
Since $\Delta_0=\Delta(x_0,r_0)$ with $x_0\in\partial\Omega$ and $0<r_0<\diam(\partial\Omega)/25$ was arbitrary, we have proved that $\omega_L\ll\sigma$ and
\begin{equation}\label{rhqpeq}
\int_{\Delta}k_{L,\Omega}^{X_{\Delta}}(y)^q\,d\sigma(y)\leq C\sigma(\Delta)^{1-q},\qquad\Delta=\Delta(x,r),\qquad 0<r<\frac{\diam(\partial\Omega)}{25},
\end{equation}
for $C>1$ depending only on dimension, $p$, the 1-sided $\mathrm{CAD}$ constants, the ellipticity of $L_0$ and $L$, and the constant in $\omega_{L_0}\in RH_p(\partial\Omega)$. By a standard covering argument and Harnack's inequality, \eqref{rhqpeq} extends to all $0<r<\diam(\partial\Omega)$. Using Lemma \ref{solvability}, we have shown that $\omega_L=\omega_{L,\Omega}\in RH_q(\partial\Omega)$ completing the proof of Proposition \ref{prop:limiting}.
\end{proof}

\section{Proof of Theorem \texorpdfstring{\hyperref[perturbationtheo]{\ref*{perturbationtheo}\textnormal{(b)}}}{1.1(b)}}\label{section:main-b} 


We first note that by Theorem \hyperref[perturbationtheo]{\ref*{perturbationtheo}(a)}, the fact that $\vertiii{\varrho(A,A_0)}\leq \varepsilon_0$ gives that $\omega_L\in RH_q(\partial\Omega)$ for some $1<q<\infty$, and in particular $\omega_{L}\ll\sigma$. The goal of Theorem \hyperref[perturbationtheo]{\ref*{perturbationtheo}(b)} is to see that if $\varepsilon_0>0$ is taken sufficiently small, then we indeed have that $\omega_L\in RH_p(\partial\Omega)$, that is, $L_0$ and $L$ are in the same reverse Hölder class. To this aim, we split the proof in several steps.

\medskip

We choose $M_0>400\,\kappa_0/c$ (which will remain fixed during the proof) where $c$ is the corkscrew constant and $\kappa_0$ as in  \eqref{definicionkappa12}. Given an arbitrary ball $B_0=B(x_0,r_0)$ with $x_0\in \partial\Omega$ and $0<r_0<\diam(\partial\Omega)/M_0$, let $\Delta_0=B_0\cap\partial\Omega$ and take $X_{M_0\Delta_{Q_0}}$ the corkscrew point relative to $M_0\Delta_{Q_0}$ (note that $M_0 r_0<\diam(\partial\Omega)$). If  $Q_0\in\mathbb{D}^{\Delta_0}$ then $\ell(Q_0)< 400\,r_0<\diam(\partial\Omega)/\kappa_0$. Also $\delta(X_{M_0\Delta_0})\geq c M_0r_0>2\kappa_0r_0$, and by by \eqref{definicionkappa12},
\begin{equation}\label{eq:X0-TDelta0}
X_{M_0\Delta_0}\in\Omega\setminus 2\kappa_0 B_0\subset \Omega\setminus T_{\Delta_0}^{**}.
\end{equation}

\subsection{Step 0}
As done in Step 0 of the proof of Theorem \hyperref[perturbationtheo]{\ref*{perturbationtheo}(a)}, we let work with $\widetilde{L}=L^j$, associated with the matrix $\widetilde{A}=A^j$ defined in \eqref{definicionLtilde}.
As there we have that $\omega_{\widetilde{L}}\ll\sigma$, hence we let $k_{\widetilde{L}}^X:=d\omega_{\widetilde{L}}^X/d\sigma$. This qualitative property will be essential in the first two steps.
At the end of Step 2 we will have obtained the desired conclusion for the operator $\widetilde{L}=L^j$, with constants independent of $j\in\mathbb{N}$, and in Step 3 we will transfer it to $L$ via a limiting argument. From now on, $j\in\mathbb{N}$ will be fixed and we will focus on the operator $\widetilde{L}=L^j$.
\medskip

\subsection{Step 1}
We start by fixing $B_0=B(x_0,r_0)$ with $x_0\in\partial\Omega$, $0<r_0<\diam(\partial\Omega)/M_0$
and $M_0$ as chosen above.  Set $\Delta_0=B_0\cap\partial\Omega$ and $X_{0}:=X_{M_0\Delta_{Q_0}}$ so that \eqref{eq:X0-TDelta0} holds.
We define the operator $L_1u=L_1^{\Delta_0}u=-\div(A_1\nabla u)$ where
$$
A_1(Y):=\left\{
             \begin{array}{ll}
               \widetilde{A}(Y) & \hbox{$\text{if }Y\in T_{\Delta_0},$} \\
               A_0(Y) & \hbox{$\text{if }Y\in\Omega\setminus T_{\Delta_0},$}
             \end{array}
           \right.
$$
and $\widetilde{A}=A^j$ as in \eqref{definicionLtilde}. By construction, it is clear that $\mathcal{E}_1:=A_1-A_0$ verifies $|\mathcal{E}_1|\leq|\mathcal{E}|\mathbf{1}_{T_{\Delta_0}}$, and also $\mathcal{E}_1(Y)=0$ if $\delta(Y)<2^{-j}$. Hence, the support of $A_1-A_0$ is contained in a compact subset of $\Omega$.

In order to simplify the notation, we set $\widehat{\Delta}_0:=\tfrac{1}{2}\Delta_0^*=\Delta(x_0,\kappa_0r_0)$ and let $0\leq g\in L^{p'}(\widehat{\Delta}_0)$ be such that $\|g\|_{L^{p'}(\widehat{\Delta}_0)}=1$. Without loss of generality, we may assume that $g$ is defined in $\partial\Omega$ with $g\equiv 0$ in $\Omega\setminus\widehat{\Delta}_0$. For $0<t<\kappa_0 r_0/2$, we consider $g_t=P_tg\geq 0$ with $P_tg$ defined as in \eqref{defgt}, together with the solutions
$$
u_0^t(X)=\int_{\partial\Omega}g_t(y)\,d\omega_{L_0}^X(y),\qquad u_1^t(X)=\int_{\partial\Omega} g_t(y)\,d\omega_{L_1}^X(y),\qquad X\in\Omega.
$$
By Lemma \ref{densidad},  $g_t\in\mathrm{Lip}(\partial\Omega)$ verifies $\supp(g_t)\subset \Delta_0^*$ and hence $g_t\in\mathrm{Lip}_c(\partial\Omega)\subset H^{1/2}(\partial\Omega)\cap C_c(\partial\Omega)$.
Since $\mathcal{E}_1=A_1-A_0$ verifies $|\mathcal{E}_1|\leq|\mathcal{E}|\mathbf{1}_{T_{\Delta_0}}$ and also $\mathcal{E}_1(Y)=0$ if $\delta(Y)<2^{-j}$, \eqref{eq:X0-TDelta0} and \eqref{definicionkappa12} allow us to invoke  Lemma \ref{proprepresent} (see Remark \ref{modific}) to obtain
\begin{align*}
&F^t(X_0):
=|u_1^t(X_0)-u_0^t(X_0)|
=
\bigg|\iint_{\Omega}(A_0-A_1)(Y)\nabla_Y G_{L_1}(X_0,Y)\cdot\nabla u_0^t(Y)\,dY\bigg|
\\
&\ \ \leq
\sum_{Q_0\in\mathbb{D}^{\Delta_0}}
\sum_{Q\in\mathbb{D}_{Q_0}}\sum_{I\in\mathcal{W}_{Q}^*}\iint_{I^*}|\mathcal{E}(Y)||\nabla_Y G_{L_1}(X_0,Y)||\nabla u_0^t(Y)|\,dY
\\
&\ \ \le
\sum_{Q_0\in\mathbb{D}^{\Delta_0}}
\sum_{Q\in\mathbb{D}_{Q_0}}\sum_{I\in\mathcal{W}_{Q}^*}
\sup_{I^*}|\mathcal{E}|\bigg(\iint_{I^*}|\nabla_Y G_{L_1}(X_0,Y)|^2\,dY\bigg)^{1/2}\bigg(\iint_{I^*}|\nabla u_0^t(Y)|^2\,dY\bigg)^{1/2}.
\end{align*}
Note that for every $Q_0\in\mathbb{D}^{\Delta_0}$ and our choice of $M_0$, we have that $\ell(Q_0)<\diam(\partial\Omega)/\kappa_0$. Thus by Lemma \ref{carlesondisc} the estimate $\vertiii{a} \leq \varepsilon_0$ implies that
 $\mathfrak{m}=\{\gamma_{Q}\}_{Q\in\mathbb{D}(\partial\Omega)}\in\mathcal{C}(Q_0)$ (see \eqref{coefcarleson}) and $\|\mathfrak{m}\|_{\mathcal{C}(Q_0)}\leq\kappa \varepsilon_0$, where $\kappa>0$ depends only on dimension and on the 1-sided $\mathrm{CAD}$ constants. At this point we just need to repeat the arguments in \eqref{hastaqui}--\eqref{eq:conc1} in every $Q_0\in\mathbb{D}^{\Delta_0}$ with $\mathcal{F}=\emptyset$ and hence $\mathbb{D}_{\mathcal{F},Q_0}=\mathbb{D}_{Q_0}$. This ultimately gives
$$
F^t(X_0)\lesssim
\varepsilon_0^{1/2}
\sum_{Q_0\in\mathbb{D}^{\Delta_0}} \|k_{L_1}^{X_0}\|_{L^p(Q_0)}
\lesssim\varepsilon_0^{1/2}\|k_{L_1}^{X_0}\|_{L^p(\widehat{\Delta}_0)},
$$
where the last inequality is justified by the bounded cardinality of $\mathbb{D}^{\Delta_0}$. Therefore,
$$
0\leq u_1^t(X_0)\leq F^t(X_0)+u_0^t(X_0)\lesssim\varepsilon_0^{1/2}\|k_{L_1}^{X_0}\|_{L^p(\widehat{\Delta}_0)}+\|k_{L_0}^{X_0}\|_{L^{p}(\Delta_0^*)},
$$
where we have used that $\|g_t\|_{L^{p'}(\partial\Omega)}\lesssim 1$ and that $\text{supp}(g_t)\subset \Delta_{0}^*$ by Lemma \ref{densidad} and where the implicit constants do not depend on $t$. Next, we write
\begin{multline*}
\int_{\partial\Omega}g(y)k_{L_1}^{X_0}(y)\,d\sigma(y)
=u_1^t(X_0)+\int_{\partial\Omega}(g(y)-g_t(y)) k_{L_1}^{X_0}(y)\,d\sigma(y)
\\
\lesssim
\varepsilon_0^{1/2}\|k_{L_1}^{X_0}\|_{L^p(\widehat{\Delta}_0)}+\|k_{L_0}^{X_0}\|_{L^{p}(\Delta^*_{0})}+\|g-g_t\|_{L^{p'}(\partial\Omega)}\|k_{L_1}^{X_0}\|_{L^p(\Delta_{0}^*)}.
\end{multline*}
Notice that $g_t\rightarrow g$ in $L^{p'}(\partial\Omega)$ by Lemma \ref{densidad}, which along with the fact that $\|k_{L_1}^{X_0}\|_{L^p(\Delta_{0}^*)}\leq C_j<+\infty$, by Lemma \hyperref[proppde]{\ref*{proppde}(e)} and Harnack's inequality, implies
$$
\int_{\partial\Omega}g(y)k_{L_1}^{X_0}(y)\,d\sigma(y)\lesssim\varepsilon_0^{1/2}\|k_{L_1}^{X_0}\|_{L^p(\widehat{\Delta}_0)}+\|k_{L_0}^{X_0}\|_{L^{p}(\Delta^*_{0})}.
$$
Taking the supremum over all $0\leq g\in L^{p'}(\widehat{\Delta}_0)$ with $\|g\|_{L^{p'}(\widehat{\Delta}_0)}=1$ we obtain
$$
\|k_{L_1}^{X_0}\|_{L^p(\widehat{\Delta}_0)}\leq C\varepsilon_0^{1/2}\|k_{L_1}^{X_0}\|_{L^p(\widehat{\Delta}_0)}+C\|k_{L_0}^{X_0}\|_{L^p(\Delta^*_{0})},
$$
where $C$ depends on the allowable parameters. Since $\|k_{L_1}^{X_0}\|_{L^p(\widehat{\Delta}_0)}\leq C_j<\infty$, taking $\varepsilon_0<C^{-2}/4$, we can hide the first term in the left hand side to obtain $\|k_{L_1}^{X_0}\|_{L^p(\widehat{\Delta}_0)}\lesssim\|k_{L_0}^{X_0}\|_{L^p(\Delta^*_{0})}$. Using then that $\omega_{L_0}\in RH_p(\partial\Omega)$ and Harnack's inequality to change the pole from $X_0=X_{M_0\Delta_0}$ to $X_{\Delta_{0}^*}$ (with constants depending on $M_0$, which is already fixed), we conclude that
\begin{multline}\label{rhpL1vt}
\int_{\Delta_0}k_{L_1}^{X_{\Delta_0}}(y)^p\,d\sigma(y)
\lesssim
\int_{\widehat{\Delta}_0}k_{L_1}^{X_0}(y)^p\,d\sigma(y)\lesssim\int_{\Delta_0^*}k_{L_0}^{X_0}(y)^p\,d\sigma(y)
\\
\approx
\int_{\Delta_0^*}k_{L_0}^{X_{\Delta^*_0}}(y)^p\,d\sigma(y)\lesssim\sigma(\Delta^*_0)^{1-p}\approx\sigma(\Delta_0)^{1-p}.
\end{multline}
\medskip

\subsection{Step 2}
Let $L_2:=-\div(A_2\nabla u)$ where
$$
A_2(Y):=\left\{
             \begin{array}{ll}
               A_1(Y) & \hbox{$\text{if }Y\in T_{\Delta_0}$,} \\
               \widetilde{A}(Y) & \hbox{$\text{if }Y\in \Omega\setminus T_{\Delta_0}$,}
             \end{array}
           \right.
$$
and hence $A_2=\widetilde{A}$ in $\Omega$. As seen in Step 0, since $\widetilde{L}\equiv L_0$ in $\{Y\in\Omega:\,\delta(Y)< 2^{-j}\}$, we have that $\omega_{L_2}=\omega_{\widetilde{L}}\ll\sigma$, and there exists $k_{L_2}=d\omega_{L_2}/d\sigma$. Set $B'_0:=B(x_0,r_0/(2\kappa_0))$ and $\Delta_0'=B_0'\cap\partial\Omega$. By \eqref{definicionkappa0}, $2\kappa_0 B'_0\cap\Omega\subset\tfrac{5}{4} B_0\cap\Omega\subset T_{\Delta_0}$ and since $L_2\equiv L_1$ in $T_{\Delta_0}$, Lemma \hyperref[proppde]{\ref*{proppde}(e)} implies
$$
k_{\widetilde{L}}^{X_{\Delta'_0}}(y)=k_{L_2}^{X_{\Delta'_0}}(y)\approx k_{L_1}^{X_{\Delta'_0}}(y),\quad\text{ for }\sigma\text{-a.e. }y\in\Delta'_0.
$$
Consequently, using \eqref{rhpL1vt} and Harnack's inequality (with constants depending on $M_0$, which is already fixed), we obtain
$$
\int_{\Delta'_0}k_{\widetilde{L}}^{X_{\Delta'_0}}(y)^p\,d\sigma(y)
\approx
\int_{\Delta'_0}k_{L_1}^{X_{\Delta'_0}}(y)^p\,d\sigma(y)
\lesssim
\int_{\Delta_0}k_{L_1}^{X_{\Delta_0}}(y)^p\,d\sigma(y)\lesssim\sigma(\Delta_0)^{1-p}\approx\sigma(\Delta'_0)^{1-p}.
$$
Since the surface ball $\Delta_0=\Delta(x_0,r_0)$ with $x_0\in\partial\Omega$ and $r_0<\diam(\partial\Omega)/M_0$ was arbitrary, we have proved that
\begin{equation}\label{cotaL}
\int_{\Delta}k_{\widetilde{L}}^{X_{\Delta}}(y)^p\,d\sigma(y)\lesssim \sigma(\Delta)^{1-p},\qquad\Delta=\Delta(x,r),\quad 0<r<\frac{\diam(\partial\Omega)}{2M_0\kappa_0}.
\end{equation}
By a standard covering argument and Harnack's inequality, \eqref{cotaL} extends to all $\Delta=\Delta(x,r)$ with $0<r<\diam(\partial\Omega)$.
This and Lemma \ref{solvability} show that $\omega_{\widetilde{L}}\in RH_p(\partial\Omega)$ where we recall that $\widetilde{L}=L^j$ is the operator defined in \eqref{definicionLtilde}, $j\in\mathbb{N}$ is arbitrary, and the implicit constant is independent of $j\in\mathbb{N}$.
\medskip

\subsection{Step 3}
Using the previous step and Proposition \ref{prop:limiting} with $q=p$ we conclude as desired that $\omega_{L}\in RH_p(\partial\Omega)$ and the proof of Theorem \hyperref[perturbationtheo]{\ref*{perturbationtheo}(b)} is complete.

\begin{remark}\label{observacionutil}
One can easily see from the previous proof that $\vertiii{a}\leq\varepsilon_0$ could be slightly weakened by simply assuming that $\|\mathfrak{m}\|_{\mathcal{C}(Q_0)}$ is small enough, with $\mathfrak{m}=\{\gamma_Q\}_{Q\in\mathbb{D}(\partial\Omega)}$ and $\gamma_Q$ defined in \eqref{coefcarleson}. Further details are left to the interested reader.
\end{remark}
\medskip

\section{Applications of Theorem \texorpdfstring{\hyperref[perturbationtheo]{\ref*{perturbationtheo}\textnormal{(b)}}}{1.1(b)}}
\label{section:apps} 

Given $L_0$, $L$ elliptic operators with matrices $A_0$, $A$ respectively, we say that their disagreement defined in \eqref{discrepancia} verifies a \emph{vanishing trace} Carleson condition if
\begin{equation}\label{vanishingcarleson}
 \lim_{s\rightarrow 0^+}
\Bigg(\sup_{\substack{x\in\partial\Omega \\ 0<r\leq s<\diam(\partial\Omega)}}\frac{1}{\sigma(\Delta(x,r))}\iint_{B(x,r)\cap\Omega}\frac{\varrho(A,A_0)(X)^2}{\delta(X)}\,dX\Bigg)
=0.
\end{equation}
\medskip

\begin{corollary}\label{vanishingsmallscales}
Suppose that $\Omega\subset\re^{n+1}$ is a 1-sided $\mathrm{CAD}$. Let $L_0$, $L$ be elliptic operators whose disagreement in $\Omega$ is given by the function $\varrho(A,A_0)$ defined in \eqref{discrepancia}. If $\omega_{L_0}\in RH_p(\partial\Omega)$ for some $1<p<\infty$ and the vanishing trace Carleson condition \eqref{vanishingcarleson} holds, then $\omega_L\ll\sigma$ and there exist $C_0>0$ (depending only on dimension, $p$, the 1-sided $\mathrm{CAD}$ constants, the ellipticity of $L_0$ and $L$, and the constant in $\omega_{L_0}\in RH_p(\partial\Omega)$), and $0<r_0<\diam(\partial\Omega)$ (depending on the above parameters and the condition \eqref{vanishingcarleson}), such that
\begin{equation}\label{rhpsmall}
\int_{\Delta}k_L^{X_{\Delta}}(y)^p\,d\sigma(y)\leq C_0\,\sigma(\Delta)^{1-p},\quad\Delta=\Delta(x,r),\quad x\in\partial\Omega,\quad 0<r\leq r_0.
\end{equation}
\end{corollary}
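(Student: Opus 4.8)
The plan is to show that the proof of Theorem~\ref{perturbationtheo}(b) is local in scale, and therefore goes through once the relevant discrete Carleson measure is known to be small merely on all \emph{sufficiently small} cubes --- which is exactly the information carried by the vanishing trace condition \eqref{vanishingcarleson}. Recall (compare Remark~\ref{observacionutil}) that the global smallness $\vertiii{\varrho(A,A_0)}\le\varepsilon_0$ enters Steps~1--2 of that proof only through Lemma~\ref{carlesondisc}, where it is used to guarantee $\|\mathfrak{m}\|_{\mathcal{C}(Q_0)}\le\varepsilon_0$ for every $Q_0\in\mathbb{D}^{\Delta_0}$, where $\mathfrak{m}=\{\gamma_Q\}_Q$ is given by \eqref{coefcarleson} and $\Delta_0=\Delta(x_0,r_0)$ is the surface ball under consideration. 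Inspecting the proof of Lemma~\ref{carlesondisc}, if $Q\in\mathbb{D}_{Q_0}$ with $Q_0\in\mathbb{D}^{\Delta_0}$ then $\ell(Q)\le\ell(Q_0)\lesssim r_0$, whence $\kappa_0 r_Q\lesssim r_0$ and
\[
\frac{\mathfrak{m}(\mathbb{D}_Q)}{\sigma(Q)}
\lesssim
\frac{1}{\sigma(\Delta(x_Q,\kappa_0 r_Q))}\iint_{B(x_Q,\kappa_0 r_Q)\cap\Omega}\frac{\varrho(A,A_0)(Y)^2}{\delta(Y)}\,dY
\le
\sup_{\substack{x\in\partial\Omega\\ 0<r\le C r_0}}\frac{1}{\sigma(\Delta(x,r))}\iint_{B(x,r)\cap\Omega}\frac{\varrho(A,A_0)(Y)^2}{\delta(Y)}\,dY.
\]
By \eqref{vanishingcarleson} the last supremum tends to $0$ as $r_0\to 0^+$; hence there is $r_\ast>0$, depending only on $\varepsilon_0$ (equivalently on dimension, $p$, the 1-sided $\mathrm{CAD}$ constants, the ellipticities, and the $RH_p$ constant of $\omega_{L_0}$) and on the rate in \eqref{vanishingcarleson}, such that $\|\mathfrak{m}\|_{\mathcal{C}(Q_0)}\le\varepsilon_0$ for every $Q_0\in\mathbb{D}^{\Delta_0}$ whenever $r_0\le r_\ast$.

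Fix now an arbitrary $x_0\in\partial\Omega$ and $r_0\le r_\ast$ (shrinking $r_\ast$ if necessary so that also $r_0<\diam(\partial\Omega)/M_0$, as required by the proof of Theorem~\ref{perturbationtheo}(b)). Carrying out verbatim Step~0 (the reduction to $\widetilde L=L^j$) and Steps~1--2 of that proof with this $\Delta_0=\Delta(x_0,r_0)$ --- all of whose Carleson boxes, Whitney families and dyadic cubes live at scales $\lesssim r_0\le r_\ast$, so that the displayed estimate above keeps them $\le\varepsilon_0$ --- we obtain, with constants independent of $j$,
\[
\int_{\Delta}k_{\widetilde L,\Omega}^{X_{\Delta}}(y)^p\,d\sigma(y)\lesssim\sigma(\Delta)^{1-p},\qquad\Delta=\Delta(x,r),\quad x\in\partial\Omega,\quad 0<r\le r_1,
\]
for some $r_1\approx r_\ast$ depending only on the allowable parameters and on \eqref{vanishingcarleson}. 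Then, exactly as in Step~3 of the proof of Theorem~\ref{perturbationtheo}(b), we appeal to Proposition~\ref{prop:limiting} with $q=p$. Its proof is itself purely local: for a ball $B_0=B(x_0,r_0)$ with $r_0<\diam(\partial\Omega)/25$ it consumes the reverse H\"older estimate \eqref{rhplj} for $L^j$ only at the dilated ball $4\Delta_0$. Applying that argument with $r_0\le r_1/4$ therefore yields $\omega_{L,\Omega}\ll\sigma$ in $\Delta(x_0,r_0)$ together with $\int_{\Delta(x_0,r_0)}k_{L,\Omega}^{X_{\Delta(x_0,r_0)}}(y)^p\,d\sigma(y)\lesssim\sigma(\Delta(x_0,r_0))^{1-p}$. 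Since $x_0\in\partial\Omega$ was arbitrary, fixing a single admissible radius $r_0$ of order $r_1$ gives $\omega_L\ll\sigma$ on $\partial\Omega$ and the estimate \eqref{rhpsmall}, with $C_0$ depending only on the allowable parameters and $r_0$ additionally on \eqref{vanishingcarleson}.

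The only genuine obstacle is the bookkeeping behind this localization: one must verify, going step by step through Sections~\ref{section:main-a}--\ref{section:main-b}, that every Carleson box, sawtooth region, Whitney cube and dyadic cube invoked in Steps~1--2 of the proof of Theorem~\ref{perturbationtheo}(b) (and likewise every surface ball used in Proposition~\ref{prop:limiting}) is contained in a fixed dilate of $B_0$, so that only the scale-restricted smallness furnished by \eqref{vanishingcarleson} is ever needed, and that no hidden use of the \emph{global} hypothesis $\vertiii{\varrho(A,A_0)}\le\varepsilon_0$ remains. No new analytic ingredient is required: the passage to $L^j$, the hidden-term absorption in Step~1, and the duality/limiting argument of Proposition~\ref{prop:limiting} all carry over unchanged.
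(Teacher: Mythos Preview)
Your argument is correct, but the paper takes a different and somewhat more modular route. Rather than re-running the proof of Theorem~\ref{perturbationtheo}(b) at small scales, the paper introduces an auxiliary operator $\widetilde L$ whose matrix equals $A$ in the strip $\{Y\in\Omega:\delta(Y)<s_0/4\}$ and equals $A_0$ outside it, where $s_0$ is chosen from the vanishing condition. A short Vitali covering argument then shows that the discrepancy $\varrho(\widetilde A,A_0)$ satisfies the \emph{global} Carleson bound $\vertiii{\varrho(\widetilde A,A_0)}\le\varepsilon_0$: for small balls this is immediate from~\eqref{vanishingcarleson}, while for large balls the intersection with the thin strip is covered by boundedly many small balls. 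One then applies Theorem~\ref{perturbationtheo}(b) directly as a black box to obtain $\omega_{\widetilde L}\in RH_p(\partial\Omega)$, and finally transfers this to $L$ at small scales via Lemma~\hyperref[proppde]{\ref*{proppde}(e)}, since $\widetilde L\equiv L$ in a fixed neighborhood of the boundary.

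The trade-off: the paper's approach keeps Theorem~\ref{perturbationtheo}(b) and Proposition~\ref{prop:limiting} as clean black boxes, at the cost of an auxiliary operator and a covering lemma; your approach avoids both, but requires the reader to re-inspect those proofs line by line to confirm that no large-scale information is implicitly used (the ``bookkeeping'' you flag). Both are valid; the paper's version is arguably more self-contained once Theorem~\ref{perturbationtheo}(b) has been established.
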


\begin{proof}
Take $\varepsilon_0>0$ from Theorem \hyperref[perturbationtheo]{\ref*{perturbationtheo}(b)} and let $M>1$ to be chosen. Thanks to \eqref{vanishingcarleson}, there exists $s_0=s_0(\varepsilon_0,M)<\diam(\partial\Omega)$ such that for every $\Delta=\Delta(x,r)$ with $x\in\partial\Omega$ and $0<r\leq s_0$, we have that
\begin{equation}\label{definicionj}
\frac{1}{\sigma(\Delta(x,r))}\iint_{B(x,r)\cap\Omega}\frac{a(X)^2}{\delta(X)}\,dX\leq\frac{\varepsilon_0}{M},
\end{equation}
where $a:=\varrho(A,A_0)$. 
Given $s>0$, set $\Sigma_s:=\{Y\in\Omega:\,\delta(Y)< s\}$ and consider the operator $\widetilde{L}u=-\div(\widetilde{A}\nabla u)$ with
$$
  \widetilde{A}(Y):=\left\{
             \begin{array}{ll}
               A_0(Y) & \hbox{$\text{if }Y\in\Omega\setminus\Sigma_{s_0/4},$} \\
               A(Y) & \hbox{$\text{if }Y\in\Sigma_{s_0/4}.$}
             \end{array}
           \right.
$$
Note that $\widetilde{A}$ is uniformly elliptic with constant $\widetilde{\Lambda}=\max\{\Lambda_A,\Lambda_{A_0}\}$, where $\Lambda_A$ and $\Lambda_{A_0}$ are the ellipticity constants of $A$ and $A_0$ respectively. Setting $\widetilde{\mathcal{E}}:=\widetilde{A}(Y)-A_0(Y)$  and $\widetilde{a}(X):=\sup_{|X-Y|<\delta(X)/2}|\widetilde{\mathcal{E}}(Y)|$, it is clear that $\widetilde{\mathcal{E}}(Y)=\mathcal{E}(Y)\mathbf{1}_{\Sigma_{s_0/4}}(Y)$. Therefore, since $B(X,\delta(X)/2)\subset\Omega\setminus\Sigma_{s_0/4}$ for each $X\in\Omega\setminus\Sigma_{s_0/2}$, we have that
\begin{equation}\label{atilda}
\widetilde{a}(X)\leq a(X)\mathbf{1}_{\Sigma_{s_0/2}}(X),\qquad X\in\Omega.
\end{equation}
Now, we claim that
\begin{equation}\label{claimatilda}
\vertiii{\widetilde{a}}=\sup_{\substack{x\in\partial\Omega \\ 0<r\leq s<\diam(\partial\Omega)}}\frac{1}{\sigma(\Delta(x,r))}\iint_{B(x,r)\cap\Omega}\frac{\widetilde{a}(X)^2}{\delta(X)}\,dX\leq\varepsilon_0,
\end{equation}
provided $M$ is chosen large enough depending only on dimension and the AR constant. To prove the claim we take $B=B(x,r)$ with $x\in\partial\Omega$ and $0<r<\diam(\partial\Omega)$. Suppose first that $0<r\leq s_0$, using \eqref{definicionj} and \eqref{atilda}, we obtain
$$
\frac{1}{\sigma(\Delta(x,r))}\iint_{B(x,r)\cap\Omega}\frac{\widetilde{a}(X)^2}{\delta(X)}\,dX
\leq
\frac{1}{\sigma(\Delta(x,r))}\iint_{B(x,r)\cap\Omega}\frac{a(X)^2}{\delta(X)}\,dX\leq\frac{\varepsilon_0}{M}\leq\varepsilon_0.
$$
On the other hand, if $r>s_0$, using \eqref{atilda} we have that
$$
\iint_{B(x,r)\cap\Omega}\frac{\widetilde{a}(X)^2}{\delta(X)}\,dX
\leq
\iint_{B(x,r)\cap\Sigma_{s_0/2}}\frac{a(X)^2}{\delta(X)}\,dX.
$$
By a standard Vitali type covering argument, there exists a family $\{\Delta_j\}_j$ of disjoint surface balls $\Delta_j=\Delta(x_j,s_0/2)$ with $x_j\in\Delta(x,2r)$, satisfying $\Delta(x,2r)\subset\bigcup_j 3\Delta_j$ and $\Delta_j\subset\Delta(x,3r)$. Note that by construction, $B(x,r)\cap\Sigma_{s_0/2}\subset\bigcup_j B(x_j,s_0)$, hence by \eqref{definicionj}, we have that
\begin{multline*}
\iint_{B(x,r)\cap\Sigma_{s_0/2}}\frac{a(X)^2}{\delta(X)}\,dX\leq\sum_j\iint_{B(x_j,s_0)\cap\Omega}\frac{a(X)^2}{\delta(X)}\,dX\leq\frac{\varepsilon_0}{M}\sum_j\sigma(\Delta(x_j,s_0))
\\
\approx
\frac{\varepsilon_0}{M}\sum_j\sigma(\Delta_j)\leq\frac{\varepsilon_0}{M}\sigma(\Delta(x,3r))\approx\frac{\varepsilon_0}{M}\sigma(\Delta(x,r))\leq\varepsilon_0\sigma(\Delta(x,r)),
\end{multline*}
for $M$ sufficiently large, depending only on dimension and on the $\mathrm{AR}$ constant. Gathering the above estimates, we have proved as desired \eqref{claimatilda}.

Next we apply Theorem \hyperref[perturbationtheo]{\ref*{perturbationtheo}(b)} to $L_0$ and $\widetilde{L}$, to conclude that $\omega_{\widetilde{L}}\in RH_p(\partial\Omega)$ and, in particular,
\begin{equation}\label{rhpLtilda}
\int_{\Delta}k_{\widetilde{L}}^{X_{\Delta}}(y)^p\,d\sigma(y)\lesssim \sigma(\Delta),\qquad\Delta=\Delta(x,r),\quad x\in\partial\Omega,\quad 0<r<\diam(\partial\Omega).
\end{equation}
Set $r_0:=s_0/(8\kappa_0)$ and let $\Delta=\Delta(x,r)$ with $x\in\partial\Omega$ and $0<r\leq r_0$. Note that $B(x,2\kappa_0r)\cap\Omega\subset B(x,s_0/4)\cap\Omega\subset\Sigma_{s_0/4}$, hence $\widetilde{L}\equiv L$ in $B(x,2\kappa_0r)\cap\Omega$. Using Lemma \hyperref[proppde]{\ref*{proppde}(e)} we have that $\omega_L\ll\sigma$ in $\Delta$ and
$$
k_L^{X_{\Delta}}(y)\approx k_{\widetilde{L}}^{X_{\Delta}}(y),\quad\text{ for }\sigma\text{-a.e. }y\in \Delta.
$$
This and \eqref{rhpLtilda} proves \eqref{rhpsmall} and the proof is complete.
\end{proof}
\medskip

\begin{corollary}
Suppose that $\Omega\subset\re^{n+1}$ is a \textbf{bounded} 1-sided $\mathrm{CAD}$. Let $L_0$, $L$ be elliptic operators whose disagreement in $\Omega$ is given by the function $a(X)$ defined in \eqref{discrepancia}, and suppose that $\omega_{L_0}\in RH_p(\partial\Omega)$ for some $1<p<\infty$. If the vanishing trace Carleson condition \eqref{vanishingcarleson} holds, then we have that $\omega_L\in RH_p(\partial\Omega)$, with constants depending on $\diam(\partial\Omega)$, dimension, $p$, the condition \eqref{vanishingcarleson}, the 1-sided $\mathrm{CAD}$ constants, the ellipticity of $L_0$ and $L$, and the constant in $\omega_{L_0}\in RH_p(\partial\Omega)$.
\end{corollary}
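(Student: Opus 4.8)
The plan is to obtain the corollary as a direct consequence of Corollary~\ref{vanishingsmallscales}, whose conclusion already contains essentially all of the substance; what remains is to upgrade the small-scale reverse Hölder estimate \eqref{rhpsmall} to \emph{all} scales, which is possible precisely because $\Omega$ is bounded, so only a bounded range of scales separates the threshold $r_0$ from $\diam(\partial\Omega)$.

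First I would apply Corollary~\ref{vanishingsmallscales} to the pair $L_0$, $L$: the hypotheses $\omega_{L_0}\in RH_p(\partial\Omega)$ and the vanishing trace Carleson condition \eqref{vanishingcarleson} give $\omega_L\ll\sigma$ together with a constant $C_0$ and a radius $0<r_0<\diam(\partial\Omega)$ (depending on the allowable parameters and on \eqref{vanishingcarleson}) such that \eqref{rhpsmall} holds for every surface ball $\Delta=\Delta(x,r)$ with $0<r\le r_0$.

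Next, for a surface ball $\Delta=\Delta(x,r)$ with $r_0<r<\diam(\partial\Omega)$, I would run a standard Vitali-type covering argument to produce points $\{x_j\}_{j\in\mathcal{J}}\subset\Delta(x,r)$ with $\Delta(x,r)\subset\bigcup_{j}\Delta_j$, $\Delta_j:=\Delta(x_j,r_0)$, and the balls $\Delta(x_j,r_0/5)$ pairwise disjoint and contained in $\Delta(x,2r)$; the $\mathrm{AR}$ property then forces $\#\mathcal{J}\lesssim(r/r_0)^n\le(\diam(\partial\Omega)/r_0)^n$. For each $j$ one has $\delta(X_{\Delta_j})\approx r_0$, $\delta(X_\Delta)\approx r$ and $|X_{\Delta_j}-X_\Delta|\lesssim\diam(\partial\Omega)$, so the Harnack Chain condition supplies a chain of length controlled by $\diam(\partial\Omega)/r_0$ joining $X_{\Delta_j}$ to $X_\Delta$; Harnack's inequality then yields $\omega_L^{X_\Delta}\approx\omega_L^{X_{\Delta_j}}$ as measures on $\partial\Omega$, with constants depending only on $\diam(\partial\Omega)/r_0$, whence $k_L^{X_\Delta}(y)\approx k_L^{X_{\Delta_j}}(y)$ for $\sigma$-a.e.\ $y$. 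Combining this with \eqref{rhpsmall} applied on each $\Delta_j$ and with $\sigma(\Delta_j)\approx r_0^n$, $\sigma(\Delta)\approx r^n$, I would estimate
\begin{align*}
\int_{\Delta}k_L^{X_\Delta}(y)^p\,d\sigma(y)
&\le\sum_{j\in\mathcal{J}}\int_{\Delta_j}k_L^{X_\Delta}(y)^p\,d\sigma(y)
\lesssim\sum_{j\in\mathcal{J}}\int_{\Delta_j}k_L^{X_{\Delta_j}}(y)^p\,d\sigma(y)
\\
&\lesssim\#\mathcal{J}\,r_0^{n(1-p)}
\lesssim\Big(\frac{r}{r_0}\Big)^{n}r_0^{n(1-p)}
=\Big(\frac{r}{r_0}\Big)^{np}r^{n(1-p)}
\lesssim\sigma(\Delta)^{1-p},
\end{align*}
using $r_0<r<\diam(\partial\Omega)$ so that $(r/r_0)^{np}\le(\diam(\partial\Omega)/r_0)^{np}$. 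Together with \eqref{rhpsmall} this produces the estimate \eqref{reverseholderL} (with pole $X_\Delta$) for all $0<r<\diam(\partial\Omega)$, and the equivalence $(c)\Leftrightarrow(b)$ in Lemma~\ref{solvability} (see also Remark~\ref{obsrhpq0}) then upgrades it to $\omega_L\in RH_p(\partial\Omega)$, with the stated dependence of the constants — in particular on $\diam(\partial\Omega)$ through $r_0$ and through the Harnack chains.

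I do not expect a genuine obstacle here: the analytic content is entirely contained in Corollary~\ref{vanishingsmallscales} (hence in Theorem~\ref{perturbationtheo}(b)), and the only point requiring care is the bookkeeping of the $\diam(\partial\Omega)$-dependence. That dependence is intrinsic: the radius $r_0$ below which the perturbation is ``small enough'' is dictated by how fast the Carleson functional in \eqref{vanishingcarleson} decays, and changing the pole from $X_{\Delta_j}$ (at height $\approx r_0$) to $X_\Delta$ (at height $\approx\diam(\partial\Omega)$) costs a Harnack constant that degenerates as $r_0/\diam(\partial\Omega)\to 0$.
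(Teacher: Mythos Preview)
Your proposal is correct and follows precisely the route the paper intends: the paper states this corollary without proof, treating it as an immediate consequence of Corollary~\ref{vanishingsmallscales}, and your covering-plus-Harnack argument to pass from the small-scale estimate \eqref{rhpsmall} to all scales is exactly the ``standard covering argument and Harnack's inequality'' the authors invoke in analogous places (see the end of Step~2 in Section~\ref{section:main-b} and the end of Step~4 in Section~\ref{section:main-a}). The bookkeeping of the $\diam(\partial\Omega)$-dependence through $r_0$ and the Harnack chains is handled correctly.
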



\newcommand{\etalchar}[1]{$^{#1}$}

\end{document}